\theoremstyle{remark}
\newtheorem{thm}{Theorem}
\newtheorem{cor}[thm]{Corollary}
\newtheorem{prop}[thm]{Proposition}
\newtheorem{rem}[thm]{Remark}
\newtheorem{ex}[thm]{Example}
\newcommand{\dd}{{\rm d}}
\newcommand{\argmin}{\mathop{\rm argmin~}}
\newcommand{\matnorm}[1]{{\left\vert\kern-0.25ex\left\vert\kern-0.25ex\left\vert #1 \right\vert\kern-0.25ex\right\vert\kern-0.25ex\right\vert}_{\rm op}}
\newcommand{\ri}{(\textrm{i})}
\newcommand{\rii}{(\textrm{ii})}
\newcommand{\riii}{(\textrm{iii})}
\newcommand{\wht}{\widehat}
\newcommand{\wt}{\widetilde}
\newcommand{\mx}{\mbox}
\newcommand{\id}{\textrm{Id}}
\def\ms{\mathscr}
\def\mb{\mathbb}
\def\m{\mathcal}
\DeclareMathOperator{\KL}{KL}
\DeclareMathOperator{\W}{W}
\begin{document}

\title{Wasserstein Proximal Coordinate Gradient Algorithms}

\author{\name Rentian Yao \email rentian2@illinois.edu \\
       \addr Department of Statistics\\
       University of Illinois at Urbana--Champaign\\
       Champaign, IL 61820, USA
       \AND
       \name Xiaohui Chen \email xiaohuic@usc.edu \\
       \addr Department of Mathematics\\
       University of Southern California\\
       Los Angeles, CA 90089, USA
       \AND
       \name Yun Yang \email yy84@umd.edu \\
       \addr Department of Mathematics\\
       University of Maryland\\
       College Park, MD 20742, USA}

\editor{Matthew Hoffman}

\maketitle

\begin{abstract}
Motivated by approximation Bayesian computation using mean-field variational approximation and the computation of equilibrium in multi-species systems with cross-interaction, this paper investigates the composite geodesically convex optimization problem over multiple distributions. The objective functional under consideration is composed of a convex potential energy on a product of Wasserstein spaces and a sum of convex self-interaction and internal energies associated with each distribution. To efficiently solve this problem, we introduce the Wasserstein Proximal Coordinate Gradient (WPCG) algorithms with parallel, sequential, and random update schemes. Under a quadratic growth (QG) condition that is weaker than the usual strong convexity requirement on the objective functional, we show that WPCG converges exponentially fast to the unique global optimum. In the absence of the QG condition, WPCG is still demonstrated to converge to the global optimal solution, albeit at a slower polynomial rate. Numerical results for both motivating examples are consistent with our theoretical findings.
\end{abstract}

\begin{keywords}
composite convex optimization, coordinate descent, optimal transport, Wasserstein gradient flow, mean-field variational inference, multi-species systems.
\end{keywords}

\section{Introduction}\label{sec: intro}
The task of minimizing a functional over the space of probability distributions is common in statistics and machine learning, with a wide range of applications in nonparametric statistics \citep{kiefer1956consistency, yan2023learning}, Bayesian analysis \citep{trillos2020bayesian, yao2022mean, ghosh2022representations, lambert2022variational}, online learning \citep{guo2022online, ballu2022mirror}, single cell analysis \citep{lavenant2021towards}, and spatial economies \citep{blanchet2016existence}. Since a probability distribution is an infinite-dimensional object with rich geometric structures,  analysis of such an optimization problem requires special treatment and usually leverages optimal transport theory where the convergence is characterized through the Wasserstein metric. In this paper, we consider the problem of jointly minimizing an objective functional over $m$ distributions $\{\rho_j\}_{j=1}^m$, where $\rho_j\in \ms P_2(\m X_j)$, the space of all probability distributions over the $j$-th (Euclidean) domain $\m X_j$ with finite second-order moments. Precisely, we consider the following general optimization problem:
\begin{align}\label{eqn: obj_func}
    \min_{\rho_1,\ldots,\rho_m}\m F(\rho_1,\ldots,\rho_m) := \m V(\rho_1,\dots, \rho_m) + \sum_{j=1}^m \m H_j(\rho_j) + \sum_{j=1}^m \m W_j(\rho_j),
\end{align}
\vspace{-2em}
\begin{subequations}\label{eqn:energy_terms}
\begin{align}
\hspace{-5em}  \mbox{where}\qquad  \m V(\rho_1, \dots, \rho_m) &= \int_{\m X} V(x_1, \dots, x_m)\,\dd\rho_1\cdots\dd\rho_m, \label{eqn: potential_energy}\\
\m H_j(\rho_j) &= \int_{\m X_j} h_j(\rho_j(x_j))\,\dd x_j, \,\,\forall\, j\in[m],\\
\m W_j(\rho_j) &= \int\!\!\!\int_{\m X_j\times\m X_j} W_j(x_j, x_j')\,\dd\rho_j(x_j)\dd\rho_j(x_j').
\end{align}
\end{subequations}
Here we use $\m X = \bigotimes_{j=1}^m \m X_j$ to denote the product space of $\{\m X_j\}_{j=1}^m$. Throughout the paper, the notation for a distribution, such as $\rho$, may stand for both the corresponding probability measure or its density function relative to the Lebesgue measure.
In this formulation, $\m V$ can be interpreted as an \emph{interaction} potential energy functional with potential function $V:\,\m X\to\mb R$, describing the interactions among the $m$ input distributions, $\m H_j$ is the individual \emph{internal} energy functional associated with $\rho_j$ for some convex function $h_j:\,[0,\infty)\to \mathbb R$, and $\m W_j$ is the \emph{self-interaction} functional associated with $\rho_j$ for some self-interaction potential function $W_j:\m X_j\times\m X_j\to\mathbb R$. By convention, we define $\m H_j(\rho_j) = \infty$ if $\rho_j$ is not absolutely continuous with respect to the Lebesgue measure and $h_j$ is not a constant function. When $h_j \equiv 0$ and $W_j \equiv 0$, this problem degenerates into the problem of minimizing an $m$-variate function $V$ on the Euclidean space; when $h_j(x) = x\log x$, $W_j = 0$, and $m=1$, this optimization problem amounts to minimize the Kullback-Leibler (KL) divergence $\KL(\cdot\,\,\|\,\rho^\ast_1)$ with $\rho^\ast_1 \,\propto\, e^{-V}$. In general, an  internal energy functional $\m H_j$ with $h_j$ satisfying $h_j(x)\to\infty$ as $x \to\infty$ prevents the optimal solution of $\rho_j$ from degenerating into a point mass measure.  Our problem of minimizing the functional in~\eqref{eqn: obj_func} is chiefly motivated by the two representative examples below.


\begin{ex}[{\bf Mean-field inference in variational Bayes}]
    \label{ex:MFVI}
    In approximate Bayesian computation (ABC) when optimizing a functional over a single high-dimensional  distribution to approximate the posterior distribution, it can often be computationally and theoretically advantageous to employ a mean-field approximation by breaking into the product of many lower-dimensional ones, provided that the bias introduced by this approximation is tolerable or can be suitably controlled. Specifically, consider a Bayesian model with a likelihood function $p(x\,|\,\theta)$ and a prior density function $\pi$ (relative to the Lebesgue measure) over the parameter space $\Theta=\bigotimes_{j=1}^m\Theta_j$, where the parameter $\theta=(\theta_1,\ldots,\theta_m)$ is divided into $m$ pre-specified blocks with $\theta_j\in\Theta_j$. Given observed i.i.d.~data $X^n = (X_1, \dots, X_n)$, the posterior density function of $\theta$ can be expressed via Bayes' rule as
\begin{align}\label{eqn: posterior_density}
    \pi_n(\theta) \coloneqq p(\theta\,|\,X^n) = \frac{\pi(\theta)\prod_{i=1}^n p(X_i\,|\,\theta)}{\int_{\Theta}\pi(\theta)\prod_{i=1}^n p(X_i\,|\,\theta)\,\dd\theta}.
\end{align}
We use $\Pi_n$ to denote the corresponding posterior distribution. 
In practice, $\pi_n$ is often computationally intractable due to the lack of an explicit form of the normalizing constant, i.e., the denominator in~\eqref{eqn: posterior_density}. Mean-field variational inference (MFVI) \citep{bishop2006pattern}, which approaches this task by turning the integration problem into an optimization problem, can be formulated as finding a closest fully factorized distribution $\wht\pi_n=\bigotimes_{j=1}^m
\wht \rho_j$ to approximate the target posterior distribution $\Pi_n$ with respect to the KL divergence; that is, computing
\begin{align*}
    \wht\pi_n = \argmin_{\rho = \bigotimes_{j=1}^m\rho_j}\KL(\rho\,\|\,\Pi_n),\quad\mx{s.t.}\quad \rho_j\in\ms P(\Theta_j)\quad\forall\, j\in[m].
\end{align*}
Up to a $\rho$ independent constant, this is equivalent to
\begin{align}\label{eqn: MFVI}
    \wht\pi_n &= \argmin_{\rho=\bigotimes_{j=1}^m \rho_j} \bigg\{\int_\Theta \big[nU_n(\theta) - \log\pi(\theta)\big]\,\dd\rho_1\cdots\dd\rho_m + \sum_{j=1}^m \int_{\Theta_j} \rho_j\log\rho_j\bigg\},
\end{align}
where $U_n(\theta) = -\frac{1}{n}\sum_{i=1}^n\log p(X_i\,|\,\theta)$ corresponds to the (averaged) negative log-likelihood function of data $X^n$. It is then apparent that above is a special case of the general formulation~\eqref{eqn: obj_func} by taking $V = nU_n - \log\pi$, $h_j(x) = x\log x$, $W_j = 0$, and $\m X_j = \Theta_j$. The optimal solution $(\wht\rho_1, \dots, \wht\rho_m)$ corresponds to the mean-field approximation of the joint posterior distribution of the parameter $\theta = (\theta_1, \dots, \theta_m)$ via the relationship $\wht\pi_n=\bigotimes_{j=1}^m
\wht \rho_j$. 
\citet{ghosh2022representations} shows the connection between MFVI~\eqref{eqn: MFVI} and the objective functional~\eqref{eqn: obj_func} without proposing practical algorithm for solving~\eqref{eqn: MFVI}. \citet{yao2022mean} focuses on solving special cases of problem~\eqref{eqn: MFVI} with $m=2$ blocks corresponding to continuous model parameters and discrete latent variables.

Concurrent and independent to our work,~\cite{arnese2024convergence} and~\cite{lavenant2024convergence} have also investigated the minimization of~\eqref{eqn: MFVI} with coordinate ascent variational inference (CAVI) using optimal transport theory. \cite{arnese2024convergence} explored the convergence of CAVI with parallel and sequential update schemes, while~\cite{lavenant2024convergence} studied the random update scheme. Both papers primarily emphasized the theoretical analysis of the CAVI algorithm, demonstrating similar convergence results to ours for CAVI when the posterior distributions are log-concave.
In contrast, our work introduces a novel algorithm that differs from CAVI and studies its algorithmic convergence rate. Furthermore, our analysis is conducted under the quadratic growth condition (detailed in Assumption D), which is weaker than the log-concavity assumption as later shown in Proposition~\ref{prop: suff_cond_QG}.
\qed
\end{ex}



\begin{ex}[{\bf Equilibrium in multi-species systems with cross-interaction}]
    \label{ex:multi-species}
    Multi-species systems \citep{carrillo2018zoology, daus2022random} arise in applications in cell biology \citep{pinar2021reaction} and population dynamics \citep{zamponi2017analysis}. In this example, we consider the following non-local multi-species cross-interaction model with diffusion,
\begin{align}\label{eqn:aggregation_model}
    \partial_t \rho_j = \nabla\cdot\Big(\rho_j\, \Big[\,\nabla V_j - \sum_{i=1}^m (\nabla K_{ij})\ast\rho_i + \nabla h_j'(\rho_j)\,\Big]\Big),\ \ j\in[m],
\end{align}
where $\rho_j(x,t)$ is the unknown mass density of species $j$ at location $x$ and time $t$, $V_j$ is the external potential field affecting species $j$, $K_{jj}$ is the self-interaction potential of species $j$, and $\{K_{ij}:\,1\leq i\neq j\leq m\}$ are the cross-interaction potentials.
When $\m H_j$ is the negative self-entropy functional $h_j(\rho_j) = \rho_j\log\rho_j$ for the $j$-th species, PDE~\eqref{eqn:aggregation_model} corresponds to the Fokker--Planck equation associated with the mean-field multi-species stochastic interacting particle system \citep{daus2022random},
\begin{align}\label{eqn: multi-species SDE}
\dd X_j(t) = -\nabla V_j\big(X_j(t)\big)\,\dd t + \sum_{i=1}^m\big(\nabla K_{ij}\ast\rho_i(\cdot, t)\big)\big(X_j(t)\big)\,\dd t + \sqrt{2}\,\dd B_j(t),\quad \forall\, j\in[m].
\end{align}
In particular, $\rho_j(\cdot, t)$ corresponds to the density function associated with the distribution of $X_j(t)$ with initialization $X_j(0)\sim\rho_j(\cdot, 0)$ for all $j\in[m]$. Another common class of $\m H_j$ has $h_j(\rho_j)=\rho_j^{m_j}$ with $m_j>1$, corresponding to diffusion in porous media~\citep{aronson2006porous,vazquez2007porous}.
For such an entropy functional $\m H_j$, there is no analogous stochastic differential equation (SDE) corresponding to equation~\eqref{eqn:aggregation_model}.  For symmetric multi-species models where $K_{ij} = K_{ji}$, finding the equilibrium of equation~\eqref{eqn:aggregation_model} is equivalent to solving the optimization problem~\eqref{eqn: obj_func} with $V(x) = \sum_{i=1}^m V_i(x_i) - \sum_{1\leq i<j\leq m} K_{ij}(x_i-x_j)$ and $W_j(x_j, x_j') = -K_{jj}(x_j-x_j') / 2$ (cf. Proposition \ref{lem: multi_species_system}). The symmetric interaction kernel assumption is natural in physics applications due to Newton's third law of motion.

The demand of computing the stationary distribution (or equilibrium) of a multi-species system naturally arises in physical science, such as chemical engineering~\citep{carrayrou2002new, paz2013computing}. However, most existing literature considers computational methods for calculating the equilibrium of interacting particle systems with only one species. For example, \cite{gutleb2022computing} consider the case where $m=1$, $V = h = 0$, and the interaction potential $W$ has attractive-repulsive power-law form, by approximating the equilibrium measure with a series of orthogonal polynomials. Under their approximating schemes, the original problem of finding the equilibrium turns into an optimization problem of solving the coefficient of each polynomial. 
In the multi-species setting, \cite{owolabi2019computational} studied the equilibrium of multi-species fractional reaction-diffusion systems by simply approximating the spatial derivatives with second-order Taylor expansion without analyzing the approximation error of their methods.
\qed
\end{ex}


\subsection{Our Contributions}
In this paper, we propose the \emph{Wasserstein Proximal Coordinate Gradient} (WPCG) algorithm as a tool to minimize a composite geodesically convex functional of form~\eqref{eqn: obj_func} over \emph{multiple} distributions, which extends the coordinate descent algorithm from the Euclidean space to the space of probability distributions. We provide detailed convergence analysis for WPCG with three updating schemes: \emph{parallel}, \emph{sequential}, and \emph{random}. Specifically, we show that: (i) 
under the condition that the potential function $V$ is smooth and convex, and $\{h_j\}_{j=1}^m$ and $\{W_j\}_{j=1}^m$ are convex, WPCG converges to a global optimal solution of~\eqref{eqn: obj_func} at rate $O(1/k)$, where $k$ denotes the iteration count; (ii) under the additional quadratic growth (QG) condition in~\eqref{eqn: QG},
WPCG converges to the unique global optimum exponentially fast in the Wasserstein-2 metric. Note that QG condition is a weaker requirement than the strong convexity on $V$ and/or $\{h_j\}_{j=1}^m$ and $\{W_j\}_{j=1}^m$; see Figure~\ref{fig:ass_diagram} for the implications of various assumptions for proving convergence in optimization and Section~\ref{sec: WPCG} for further details.
In addition, implementation of WPCG with the parallel updating scheme inherently supports parallelization, which enhances its computational scalability to high-dimensional problems.

Previous studies, including \citep{ambrosio2005gradient, salim2020wasserstein, wibisono2018sampling}, predominantly concentrate on using Wasserstein gradient flow for the minimization of a functional over a single distribution. To the best of our knowledge, the current work is among the first study to: 
\begin{enumerate}[topsep=0pt,itemsep=-1ex,partopsep=1ex,parsep=1ex]
    \item investigate the application of Wasserstein gradient flow for minimizing a functional over multiple distributions;
    \item extend the design and analysis of coordinate descent type algorithms for optimization in Euclidean spaces to those for optimization in Wasserstein spaces. 
\end{enumerate}
\vspace{0.5em}

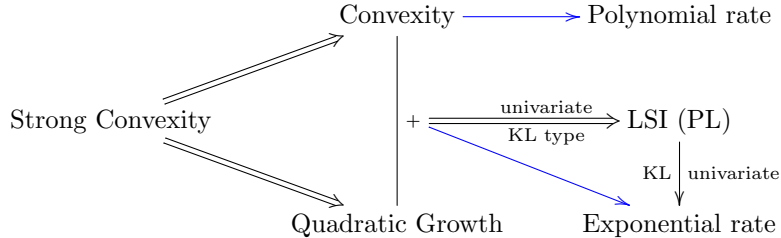
\begin{figure}[ht]
\begin{align*}
\xymatrix{
    & \text{Convexity} \ar@[blue][r] & \text{Polynomial rate} \\
    \text{Strong Convexity} \ar@{=>}[ru] \ar@{=>}[rd]  &   & \text{LSI (PL)}\ar@{>}[d]^{\text{univariate}}_{\text{KL}}\\
    & \text{Quadratic Growth} & \text{Exponential rate}
    \ar @{-} "1,2" ; "3,2"^+="a"
    \ar @{=>} "a" ;"2,3"^{\text{univariate}}_{\text{KL type}}
    \ar @[blue] "a" ;"3,3"
\restore}
\end{align*}
\caption{Implication diagram of commonly used assumptions for proving convergence results in optimization. Double arrows show the connections between assumptions on the Wasserstein space: strong convexity implies convexity and the quadratic growth condition; convexity and the quadratic growth condition together imply LSI for KL-type functionals. Single arrows show the convergence rates implied by different assumptions, among which the blue ones are studied in this work for minimizing multivariate objective functionals.}
\label{fig:ass_diagram}
\end{figure}

We emphasize two key differences from optimization on the Euclidean space.

First, most common internal energy functionals $\{\m H_j\}_{j=1}^m$ in~\eqref{eqn:energy_terms} (e.g., negative self-entropy) are {\bf non-smooth} in the usual sense---the difference incurred by the linearization of $\m H_j$ around any $\rho$ cannot be upper bounded by a multiple of the Wasserstein distance to $\rho$. For coordinate descent optimization on the Euclidean space, such a smoothness property serves as two critical purposes: (i) it ensures that the discretization error of gradient descent is compensated by the contraction of the corresponding continuous-time gradient flow; (ii) it prevents overshooting of the coordinate descent update by controlling the misalignment between the coordinate-wise steepest descent direction and the entire steepest descent direction, so that coordinate descent algorithm keeps making progress in minimizing the objective functional. In the Wasserstein space, (explicit) gradient/coordinate ascent algorithms with non-smooth $\m H_j$ require the linearization of $\m H_j$ in the algorithm and a uniform control of smoothness of the Wasserstein gradient of $\m H_j$. Our analysis reveals that considering an implicit scheme (or a proximal type update) in designing coordinate descent type algorithms in Wasserstein spaces can avoid imposing a smoothness assumption on $\{\m H_j\}_{j=1}^m$ when analyzing their convergence. In particular, the linear convergence of WPCG only requires the potential gradient $\nabla V$ to be Lipschitz (cf. Assumption B in Section~\ref{sec: WPCG}). Thus, convergence rates of WPCG are derived under more realistic conditions on the objective function $\m F$ and hold for general functionals $\{\m H_j\}_{j=1}^m$ than the negative self-entropy. This allows us to compute the stationary measure of non-local multi-species degenerate cross diffusion beyond heat diffusion (cf. more details for our numeric example on porous medium equation in Section~\ref{subsec:multi-species_systems}).

A second challenge in demonstrating the convergence of WPCG lies in the difficulty of proving a Polyak--{\L}ojasiewicz (PL) type inequality, which requires some positive power of certain norm of the ``Wasserstein gradient" of target functional $\m F$ at any $\rho_{1:m} := (\rho_1,\ldots,\rho_m)$ to be lower bounded by a multiple of $\m F(\rho_{1:m}) - \m F(\rho^*_{1:m})$, where $\rho^*_{1:m}$ denotes a global minimizer of $\m F$. It is known~\citep{karimi2016linear} that proving a PL inequality is crucial and common for analyzing gradient-based optimization algorithms in the Euclidean case.
A PL inequality is typically implied by strong convexity or a log-Sobolev inequality (LSI). It is also known that a quadratic growth (QG) condition with convexity is strictly weaker than the strong convexity (see discussions after Assumption D). While the QG condition together with convexity implies LSI for the KL-type functional over a single distribution, there is no corresponding result for a general functional over multiple distributions $\rho_{1:m}$. 
The main technical difficulty comes from the lack of tensorization for a ``multivariate" Wasserstein gradient.

\subsection{Related Work} 
{\bf Optimization over one distribution.} The seminal paper by Jordan, Kinderlehrer, and Otto~\citep{jordan1998variational} introduces an iterative scheme (now commonly known as the \emph{JKO scheme}) serving as the time-discretization of a continuous dynamic in the space of probability distributions following the direction of the steepest descent of a target functional with respect to the Wasserstein-$2$ metric. 
In later developments, the JKO scheme has been recognized as a promising numerical method for optimizing a functional over a probability distribution and can be viewed as a special proximal gradient update~\citep{rockafellar1997convex} relative to the Wasserstein-$2$ metric in the space of probability distributions (see Section~\ref{sec: convex_functional} for more details). Therefore, in the remainder of the paper, we will primarily use the term Wasserstein proximal gradient scheme instead of the JKO scheme, as this terminology more clearly suggests its close link with general convex optimization.


Recently, several other discretization methods are proposed for minimizing a functional over a single distribution (i.e.,~$m=1$ in our setting). Different from the Wasserstein proximal gradient scheme that can be viewed as an implicit (backward) scheme for discretizing the Wasserstein gradient flow (WGF), some explicit (forward) schemes, analogous to the usual gradient descent on the Euclidean space, are considered and analyzed for solving specific problems.
For example,~\cite{chewi2020gradient} analyze a gradient descent algorithm for computing the barycenter on the Bures--Wasserstein manifold of centered Gaussian probability measures. One key ingredient in their convergence analysis is to prove a quadratic growth condition for the barycenter functional that leads to a PL inequality and implies the exponential convergence of the algorithm.
For the KL divergence functional $\KL(\,\cdot\,\|\,\rho^\ast)$ with $\rho^\ast\,\propto \,e^{-V}$ for some strongly convex and smooth potential function $V$,~\cite{wibisono2018sampling} shows that a symmetrized Langevin algorithm, which is an implementable discretization of the Langevin dynamics,
reduces the bias in the classical unadjusted Langevin algorithm~\citep{durmus2017nonasymptotic} and attains exponential convergence up to a step size dependent discretization error. 
\cite{salim2020wasserstein} consider a more general setting where the objective functional consists of the same potential functional and a general non-smooth term that is convex along generalized geodesics on the Wasserstein space; they propose a forward-backward algorithm, which runs a forward step (gradient descent) for the potential functional and a backward step (proximal gradient) for the non-smooth term, and prove its exponential convergence under the same conditions on potential $V$.


\noindent {\bf Optimization over special cases of multiple distributions.} Several recent works focus on solving an important special case of problem~\eqref{eqn: obj_func} in the context of MFVI as we described in Example~\ref{ex:MFVI}, where a multi-dimensional Bayesian posterior distribution is approximated by the product of several lower-dimensional distributions relative to the KL divergence. \citet{yao2022mean} consider a block MFVI for Bayesian latent variable models with two blocks, one for the discrete latent variables and one for the continuous model parameters, which corresponds to problem~\eqref{eqn: obj_func} with $m=2$. They propose and analyze a majorization-minimization algorithm for solving MFVI, which can be viewed as a distributional extension of the classical expectation–maximization (EM) algorithm. Due to the special property that minimizing the latent variable block in the problem admits a closed-form expression, their algorithm is effectively a time-discretized WGF for optimizing a single distribution, with an effective potential function $V$ changing over the iterations. When the population-level log-likelihood function is locally strictly concave, they show that their algorithm converges exponentially fast to the solution of MFVI. \citet{ghosh2022representations} study a WGF-based algorithm for solving MFVI without latent variables. Their study shows that the discretized flow (Wasserstein proximal gradient scheme) converges to a mathematically well-defined continuous flow when the step size is small, assuming certain conditions on the potential function $V$. However, their analysis is asymptotic and they do not directly examine the convergence of the time-discretized algorithm, leaving it unclear if the discrete dynamic system is stable so that the discretization error does not accumulate over time and whether an explicit convergence rate can be obtained. 

The rest of this paper is organized as follows. In Section \ref{sec: preliminary}, we present an overview of essential concepts in optimal transport that are necessary for developing our methods and theory. Section \ref{sec: WPCG} introduces the WPCG algorithm with three different update schemes: parallel, sequential, and random update schemes. We also propose two different numerical methods to solve the Wasserstein proximal gradient scheme, a key step for implementing the WPCG algorithms. Theoretical analyses of the WPCG algorithms are provided in Section~\ref{sec:WPCG-conv} in the presence and absence of a $\lambda$-quadratic growth condition.  In Section~\ref{sec:apps}, we demonstrate the applications of our algorithm and theory to mean-field variational inference and multi-species systems, along with numerical experiments. In Section~\ref{sec: discussion}, we conclude the paper and discuss some open problems for potential future research.

\subsection{Notations}
For any $\m X\subset\mb R^d$, let $\ms P(\m X)$ be the collection of all probability measures on $\m X$, $\ms P_2(\m X) = \{\mu: \mb E_{X\sim\mu}[\|X\|^2] < \infty\}$ be the set of probability measures with finite second-order moments, and $\ms P_2^r(\m X) \subset \ms P_2(\m X)$ be the subset that includes all absolutely continuous probability measures with respect to the Lebesgue measure on $\m X$. For a measurable map $T:\m X\to\m X$, let $T_\#: \ms P(\m X)\to\ms P(\m X)$ be its corresponding pushforward operator, i.e., $\nu = T_\#\mu$ if and only if $\nu(A) = \mu(T^{-1}(A))$ for any measurable set $A\subset\m X$. Throughout the paper, we assume $\m X$ to be convex and compact unless otherwise stated.

Let $\pi^0(x, y) = x$ and $\pi^1(x, y) = y$ be two projection maps (projecting into the first and second components). Let $\Pi(\mu, \nu) = \{\gamma: (\pi^0)_\#\gamma = \mu, (\pi^1)_\#\gamma = \nu\}$ be the set of couplings\footnote{A coupling between two distributions $\mu$ and $\nu$ is a joint distribution whose two marginals are $\mu$ and $\nu$.}, or all transport plans, between $\mu$ and $\nu$. Let $\Pi_o(\mu, \nu)$ be the set of all optimal transport plans, whose precise definition can be found in Section \ref{sec: preliminary} below. For a random variable $X$, we use $\m L(X)$ to denote its distribution.

Let $C^1(\m X)$ be the set of functions on $\m X$ that have continuous derivatives. Let $L^1(\m X)$ and $L^\infty(\m X)$ represent the class of integrable functions and uniformly bounded functions on $\m X$, respectively. For a vector $x = (x_1, \dots, x_m)$, we assume its sub-vector without the $j$-th entry is denoted by the shorthand $x_{-j} = (x_1, \dots, x_{j-1}, x_{j+1}, \dots, x_m)$. Let $\|\cdot\|$ denote the vector $\ell_2$ norm. For a matrix $A$, let $\matnorm{A} = \sup_{\|x\|=1}\|Ax\|$ denote its matrix $\ell_2 \to \ell_2$ operator norm.

\section{Preliminaries on Optimal Transport}\label{sec: preliminary}
In this section, we briefly review some concepts and results in optimal transport that are necessary for explaining and developing our methods and analysis.

\subsection{Wasserstein Space and Geodesics}
For any $\mu, \nu\in\ms P_2(\m X)$, the Wasserstein-2 distance between $\mu$ and $\nu$ is defined as 
\begin{align}\label{eqn: W2_dist_def}
    \W_2^2(\mu, \nu) = \inf_{\gamma\in\Pi(\mu, \nu)} \bigg\{\int_{\m X\times\m X} \|x - y\|^2\,\dd\pi(x, y)\bigg\}\, ,
\end{align}
It is known that $(\ms P_2(\m X), \W_2)$ is a metric space called Wasserstein space. Moreover, if $\m X$ is complete and separable, $(\ms P_2(\m X), \W_2)$ is complete as well \citep{bolley2008separability}. 
The infimum in (\ref{eqn: W2_dist_def}) always admits a solution $\gamma$ called an \emph{optimal transport plan} \citep{santambrogio2015optimal}.
When $\mu\in\ms P_2^r(\m X)$, the optimal transport plan is unique and takes form $\gamma = (\id, T_{\mu}^\nu)_\#\mu$, implying $\nu = (T_\mu^\nu)_\#\mu$, where $\id$ is the identity map and $T_\mu^\nu$ is called the \emph{optimal transport map} from $\mu$ to $\nu$~\citep{santambrogio2015optimal}. 
Moreover, Brenier's Theorem \citep{brenier1987decomposition} states that this unique optimal transport map $T_\mu^\nu = \nabla\psi$ is the gradient of a convex function $\psi$.

Note that $(\ms P_2(\m X), \W_2)$ is not a flat metric space, but is positively curved in the Alexandrov sense \citep{ambrosio2005gradient}. Geodesics on a curved space are the shortest paths connecting two points.
$(\ms P_2(\m X), \W_2)$ is indeed a geodesic space. For any $\mu_0, \mu_1\in\ms P_2^r(\m X)$ and $\gamma\in\Pi_o(\mu_0, \mu_1)$, the constant-speed geodesic connecting $\mu_0$ and $\mu_1$ is $\mu_t = (\pi_t)_\#\gamma$, where $\pi_t = (1-t)\pi^0 + t\pi^1$.

\subsection{Convex Functionals on Wasserstein Space}\label{sec: convex_functional}
Convexity is an important concept in optimization. On the Euclidean space, a function $f:\m X\to\mb R$ is \emph{$\lambda$-strongly convex} if
\begin{align*}
    f(x_t) \leq (1-t)f(x_0) + tf(x_1) - \frac{t(1-t)\lambda}{2}\|x_1-x_0\|^2,
\end{align*}
and we say $f$ is \emph{$L$-smooth} if
\begin{align*}
    f(x_t) \geq (1-t)f(x_0) + tf(x_1) - \frac{t(1-t)L}{2}\|x_1-x_0\|^2,
\end{align*}
for all $t\in[0, 1]$ where $x_t = (1-t)x_0 + tx_1$. In convex optimization, $L$-smoothness and $\lambda$-strong convexity are common sufficient conditions to guarantee the exponential convergence rate of an (explicit) gradient descent algorithm, $x^{k+1}_{\text{grad}} = x^k_{\text{grad}} - \tau\nabla f(x^k_{\text{grad}})$, $k=0,1,\ldots$, towards the unique global minimum of $f$. In comparison,
the following proximal gradient algorithm
\begin{align}\label{eqn: proximal_grad}
x^{k+1}_{\text{prox}} = \argmin_{x\in\m X} \Big\{f(x) + \frac{1}{2\tau}\|x-x^k_{\text{prox}}\|^2\Big\}\, , \quad k=0,1,\ldots,
\end{align}
only requires the $\lambda$-strong convexity to guarantee its exponential convergence rate~\citep{beck2017first}. Note that a proximal gradient algorithm is also called an \emph{implicit} gradient descent algorithm, since the first-order optimality condition (FOC) for~\eqref{eqn: proximal_grad} reads $x^{k+1}_{\text{prox}} = x^k_{\text{prox}} - \tau\nabla f(x^{k+1}_{\text{prox}})$. The theoretical advantage of the proximal scheme comes at the price of the additional computational cost of numerically solving this FOC equation.

Similar to these notions in the Euclidean case, a functional $\m F:\ms P_2(\m X)\to (-\infty, +\infty]$ to be \emph{$\lambda$-strongly convex along geodesics}, if for all $\mu_0,\mu_1\in\ms P_2(\m X)$, we have
\begin{align}
\begin{aligned}
    \m F(\mu_t) &\leq (1-t)\m F(\mu_0) + t\m F(\mu_1) - \frac{\lambda}{2}t(1-t)\W_2^2(\mu_0, \mu_1),
\end{aligned}
\end{align}
where $\{\mu_t: 0\leq t\leq 1\}$ is the constant speed geodesic curve connecting $\mu_0$ and $\mu_1$.
When $\lambda=0$, we simply say that $\m F$ is \emph{geodesically convex}. It can be shown that both functionals $\m W_j(\rho_j)$ and $\m H_j(\rho_j)$ in our problem formulation~\eqref{eqn: MFVI} are geodesically convex when their defining functions $W_j$ and $h_i$ are convex \citep[Chapter 9.3]{ambrosio2005gradient}. However, different from the Euclidean case where all convex functions are continuous, geodesically convex functionals may not be continuous with respect to the $\W_2$ distance. 

\begin{ex}[Geodesically convex functional may not be continuous]
    Consider the negative self-entropy functional $\rho\mapsto \int_{\m X}\rho\log\rho$ which is geodesically convex. Let $\rho_\infty$ be the uniform distribution on $[0, 1]$ and $\rho_n$ be the uniform distribution on $A_n$ where $A_n = \bigcup_{i=0}^{n-1}[\frac{2i}{2n}, \frac{2i+1}{2n}]$. Then $\W_2(\rho_n, \rho_\infty) \to 0$ as $n \to \infty$, while $\int\rho_n\log\rho_n = \log 2$ for all $n$ and $\int\rho_\infty\log\rho_\infty = 0$.
    \qed
\end{ex}


As we mentioned in the introduction, the Wasserstein proximal gradient scheme, an iterative algorithm defined as below, is commonly used for minimizing a functional $\m F: \ms P_2(\m X)\to (-\infty, +\infty]$ defined on the Wasserstein space, 
\begin{align}\label{eqn: JKO}
    \rho^{k+1} = \argmin_{\rho\in\ms P_2(\m X)} \bigg\{\m F(\rho) + \frac{1}{2\tau}\W_2^2(\rho, \rho^{k})\bigg\}\, ,\quad k=0, 1, \ldots,
\end{align}
with the initial distribution $\rho^0$ and the step size $\tau > 0$. This is analogous to the proximal gradient algorithm \eqref{eqn: proximal_grad}, i.e., the backward time-discretization of a gradient flow on the Euclidean space. The convergence of Wasserstein proximal gradient scheme for the KL divergence functional $\m F_{\KL}(\rho) = \int V\,\dd\rho + \int\rho\log\rho$ is well studied in literature. As on the Euclidean space, piecewise interpolation $\rho_t = \rho^{\lfloor \frac{t}{\tau}\rfloor}$ converges to a continuous dynamics on Wasserstein space, i.e., the solution of Fokker--Planck equation $\partial_t\rho_t = \nabla\cdot(\rho_t\nabla(V+\log\rho_t))$ \citep{santambrogio2015optimal}. Since the Fokker--Planck equation is also the density evolution equation of Langevin dynamics \citep{jordan1998variational}
\begin{align}\label{eqn: Langevin}
    \dd X_t = -\nabla V(X_t)\,\dd t + \sqrt{2}\,\dd W_t,
\end{align}
one can also use the Euler–Maruyama method to numerically approximate the Wasserstein proximal gradient scheme for the KL using particle approximation.
It is shown in the literature that when $V$ is strongly convex, the solution $\{\rho_t:\,t\geq 0\}$ of the Fokker--Planck equation converges to the unique global minimizer $\rho^\ast  \,\propto\, e^{-V}$ of $\m F_{\rm{KL}}$ \citep{villani2021topics}. 
Corollary 2.8 in \citep{yao2022mean} further shows that the time-discretization of the continuous time dynamic via Wasserstein proximal gradient scheme $\{\rho^k: k\in\mb Z_+\}$ is unbiased, and also converges to $\rho^\ast\propto e^{-V}$ exponentially fast when $V$ is strongly convex.

In this paper, we are primarily interested in functional $\m F: \ms P_2(\m X_1)\times \cdots \times \ms P_2(\m X_m)\to (-\infty,\infty]$ that is defined over $m$ distributions. Analogously, we say that $\m F$ is \emph{(blockwise) $\lambda$-strongly convex} if 
\begin{align*}
\m F(\mu_1^t, \dots, \mu_m^t) \leq \m (1-t)\m F(\mu_1^0, \dots, \mu_m^0) + t\m F(\mu_1^1, \dots, \mu_m^1) - \frac{\lambda}{2}t(1-t)\sum_{j=1}^m\W_2^2(\mu_j^0, \mu_j^1)
\end{align*}
for all $\mu_j^0, \mu_j^1\in\ms P_2(\m X_j)$ and the corresponding constant speed geodesics $\{\mu_j^t: 0\leq t\leq 1\}$, $\forall j\in[m]$. In the next section, we will extend the aforementioned Wasserstein proximal gradient scheme from minimizing a simple functional of one distribution into minimizing a functional of multiple distributions by describing several coordinate ascent versions of JKO and analyze their convergence.

\subsection{First Variation and Wasserstein Gradient}
Let $\m F:\ms P_2(\m X) \to (-\infty, +\infty]$ be a lower semi-continuous functional. For a measure $\mu\in\ms P_2^r(\m X)$, we call $\mu$ to be \emph{regular} for $\m F$ if $\m F((1-\varepsilon)\mu + \varepsilon\nu) < \infty$ for every $\varepsilon\in[0, 1]$ and every absolutely continuous probability measure $\nu\in\ms P(\m X)\bigcap L^\infty(\m X)$. If $\mu$ is regular for $\m F$, the \emph{first variation} (or Gateaux derivative) of $\m F$ at $\mu$ is a map $\frac{\delta\m F}{\delta\rho}(\mu): \m X\to \mathbb{R}$ satisfying
\begin{align*}
    \frac{\dd}{\dd\varepsilon}\m F(\mu+\varepsilon\chi)\bigg|_{\varepsilon=0} = \int_{\m X}\frac{\delta\m F}{\delta\rho}(\mu)\,\dd\chi
\end{align*}
for any perturbation $\chi=\tilde\mu - \mu$ such that $\tilde\mu\in\ms P_2^r(\m X)$ and $\int \, \dd\chi = 0$. Clearly, $\frac{\delta\m F}{\delta\rho}(\mu)$ is uniquely defined up to additive constant. This is analogous to the gradient of $\m F$ in the $L^2(\m X)$ sense. If $\m F$ is a geodesically convex functional defined on the Wasserstein space, and $\m F(\mu)<\infty$ at some $\mu$, then the vector field $\xi = \nabla \frac{\delta\m F}{\delta\rho}(\mu)\in L^2(\mu; \m X)$ will satisfy~\citep{ambrosio2005gradient}
\begin{align}\label{eqn: convexity}
    \m F(\nu) \geq \m F(\mu) + \int_{\m X}\big\langle \xi, T_\mu^\nu - \id\big\rangle\,\dd\mu, \,\,\forall\,\nu\in\ms P_2^r(\m X).
\end{align}
We say that $\xi=\nabla\frac{\delta\m F}{\delta\rho}(\mu)$ is a \emph{strong subdifferential} (or Fr\'{e}chet derivative) of $\m F$ at $\mu$. Strong subdifferential captures the ``gradient'' with respect to the $\W_2$ metric and is more convenient to use than the first variation when analyzing the convergence of a gradient flow on the Wasserstein space. Henceforth, we will also refer to a strong subdifferential of $\m F$ as its Wasserstein gradient.

\section{Wasserstein Proximal Coordinate Gradient (WPCG) Algorithms}\label{sec: WPCG}

In this section, we introduce the WPCG algorithm with three different yet common update schemes: parallel update (WPCG-P), sequential update (WPCG-S), and random update (WPCG-R).

\subsection{Parallel Update}\label{subsec:WPCG-P}

In the WPCG-P algorithm, each coordinate is updated synchronously, i.e.,~we solve the following Wasserstein proximal gradient scheme (\ref{eqn: subproblem-P}) for all $j\in[m]$ at the same time to get $\rho_j^k$,
\begin{align}\label{eqn: subproblem-P}
    \rho_j^{k+1} = \argmin_{\rho_j\in\ms P_2^r(\m X_j)} \Big\{\, \m V(\rho_j, \rho_{-j}^{k}) + \m H_j(\rho_j) + \m W_j(\rho_j) + \frac{1}{2\tau}\W_2^2(\rho_j, \rho_j^{k}) \Big\}\, .
\end{align}
Recall that we have used the shorthand $\rho_{-j}^{k} = (\rho_1^k, \dots, \rho_{j-1}^k, \rho_{j+1}^k, \dots, \rho_m^k)$ to denote the collection of all distributions at iteration $k$ except for the $j$-th one, for each $k\in\mb N$ and $j\in[m]$. When $\m H_j = \m W_j = 0$, the solution $\rho_j^{k+1}$ of the subproblem~\eqref{eqn: subproblem-P} satisfies $(\id + \nabla V_j^k)_\#\rho_j^{k+1} = \rho_j^k$, where $V_j^k = \int V(x_j, x_{-j})\,\dd\rho_{-j}^k$ is a function on $\m X_j$. Furthermore, when the initialization $\rho_j^0$ is a point mass for all $j\in[m]$, the subproblem degenerates to the coordinate proximal descent method for updating the $j$-th block. The parallel update scheme can be parallelly computed and therefore is preferred when the number of coordinates $m$ is large. However, as we shall see in Section~\ref{sec:WPCG-conv}, this scheme is more sensitive to the step size compared with the other two (i.e., sequential and random) update schemes since it may diverge when the step size $\tau$ is large. Pseudo-code of WPCG-P is shown in Algorithm~\ref{algo: WPCG-P}.

\begin{algorithm}[ht]
   \caption{WPCG-P}
   \label{algo: WPCG-P}
\begin{algorithmic}
   \STATE Initialize distribution $\rho^0 = (\rho_1^0, \dots, \rho_m^0)$ arbitrarily, number of iterations $T$, and step size $\tau$.
   \FOR{$k=0$ {\bfseries to} $T-1$}
   \FOR{$j=1$ {\bfseries to} $m$}
     \STATE $\rho_{j}^{k+1} = \argmin_{\rho_{j}\in\ms P_2^r(\m X_{j})} \m V(\rho_j, \rho_{-j}^k) + \m H_{j}(\rho_{j}) + \m W_j(\rho_j) + \frac{1}{2\tau}\W_2^2(\rho_{j}, \rho_{j}^{k})$;
     \ENDFOR
   \ENDFOR
\end{algorithmic}
\end{algorithm}

\subsection{Sequential Update}\label{subsec:WPCG-S}

In the WPCG-S algorithm, the updates are made sequentially through all coordinates, one by one, at each iteration. Although WPCG-S cannot be made parallel, the functional value is always convergent regardless of the step size $\tau$ magnitude since WPCG-S is always a descent algorithm. To ease the presentation of the result, we use the notation $\rho_{i:j}^k = (\rho_i^k, \dots, \rho_j^k)$ to denote the distributions at iteration $k$ from index $i$ to $j$, with the convention that $\rho_{i:j}^k =\emptyset$ if $i > j$. Pseudo-code of WPCG-S is shown in Algorithm~\ref{algo: WPCG-S}.

\begin{algorithm}[ht]
   \caption{WPCG-S}
   \label{algo: WPCG-S}
\begin{algorithmic}
   \STATE Initialize distribution $\rho^0 = (\rho_1^0, \dots, \rho_m^0)$ arbitrarily, number of iterations $T$, and step size $\tau$.
   \FOR{$k=0$ {\bfseries to} $T-1$}
     \FOR{$j=1$ {\bfseries to} $m$}
     \STATE $\rho_{j}^{k+1} = \argmin_{\rho_{j}\in\ms P_2^r(\m X_{j})} \m V(\rho_{1:(j-1)}^{k+1}, \rho_{j}, \rho_{(j+1): m}^{k}) + \m H_{j}(\rho_{j}) + \m W_j(\rho_j) + \frac{1}{2\tau}\W_2^2(\rho_{j}, \rho_{j}^{k})$;
     \ENDFOR
   \ENDFOR
\end{algorithmic}
\end{algorithm}

\subsection{Random Update}\label{subsec:WPCG-R}

In the WPCG-R algorithm, we sample the index $j_l$ of the coordinate to be updated randomly and uniformly from $[m]$, independently of the previous selections of indices. To make the convergence rate comparable to the other two schemes, we consider one iteration of WPCG-R as the process of updating $M$ randomly selected coordinates, where the batch size $M$ is of the order $O(m\log m)$. This ensures that with high probability, each coordinate has been updated at least once per iteration. Similar to WPCG-S, WPCG-R is also a descent algorithm regardless of the choice of the step size $\tau$. Pseudo-code of WPCG-R is shown in Algorithm~\ref{algo: WPCG-R}.

\begin{algorithm}[ht]
   \caption{WPCG-R}
   \label{algo: WPCG-R}
\begin{algorithmic}
   \STATE Initialize distribution $\rho^0 = (\rho_1^0, \dots, \rho_m^0)$ arbitrarily, number of iterations $T$, step size $\tau$, and the number $M$ of coordinates (or batch size) updated in each iteration.
   \FOR{$k=0$ {\bfseries to} $T-1$}
   \STATE $\rho^{k, 0} = \rho^k$;
   \FOR{$l=0$ {\bfseries to} $M-1$}
   \STATE Choose index $j_l \sim \text{unif}([m])$;
   \STATE $\rho_{j_l}^{k, l+1} = \argmin_{\rho_{j_l}\in\ms P_2^r(\m X_{j_l})} \m V(\rho_{j_l}, \rho_{-j_l}^{k, l}) + \m H_{j_l}(\rho_{j_l}) + \m W_{j_l}(\rho_{j_l}) + \frac{1}{2\tau}\W_2^2(\rho_{j_l}, \rho_{j_l}^{k, l})$;
   \STATE $\rho_{-j_l}^{k, l+1} = \rho_{-j_l}^{k, l}$;
   \ENDFOR
   \STATE $\rho^{k+1} = \rho^{k, M}$;
   \ENDFOR
\end{algorithmic}
\end{algorithm}

\subsection{Implementation}
\label{subsec: numerical}

Note that a common key step in the WPCG algorithm is to solve the Wasserstein proximal gradient scheme~\eqref{eqn: JKO} with $\m F(\rho) = \m V(\rho) + \m H(\rho) + \m W(\rho)$, which does not have an explicit solution. Here we introduce two numerical methods: function approximation (FA) approach and particle approximation via SDE.
 

\subsubsection{Particle Approximation via SDE}
The first method is particle approximation via SDE, which is only applicable when $\m H(\rho)$ is the negative self-entropy functional. Recall that the Wasserstein proximal gradient scheme is an implicit scheme for discretizing the WGF. When $\m V(\rho) = \int V\,\dd\rho$, $\m H(\rho) = \int\rho\log\rho$, and $\m W(\rho) = \int\!\int W(x, x')\,\dd\rho(x)\dd\rho(x')$, the WGF of $\m F$ starting from $\rho_0$ is the evolution of the distribution of the following SDE,
\begin{align}\label{eqn: MFSDE}
\dd X_t &= -\nabla V(X_t)\,\dd t - \Big(\int\nabla_1 W(X_t, x) + \nabla_2 W(x, X_t)\,\dd\rho_t(x)\Big)\,\dd t + \sqrt{2}\,\dd W_t, \quad X_t\sim\rho_t,
\end{align}
where $\nabla_1$ and $\nabla_2$ are the gradients with respect to the first and the second variates of $W$. This connection between SDE and WGF motivates us to discretize the WGF by discretizing its corresponding SDE. When the step size (for discretization) is small, these two different discretization schemes are expected to be close. Then, we can approximate the Wasserstein proximal gradient scheme by the evolution of the discretized SDE through the empirical measure of particles which satisfy the following updating formula
\begin{align*}
X_b^{k+1} - X_b^k = -\Big(\nabla V(X^k_b) + \frac{1}{B}\sum_{b'=1}^B \big[\nabla_1W(X_b^k, X_{b'}^k) + \nabla_2 W(X_{b'}^k, X_b^k)\big]\Big)\tau + \sqrt{2\tau}\eta_b^k,
\end{align*}
where $\tau$ is the step size, $B$ is the number of particles, and $\eta_b^k\stackrel{\rm{i.i.d.}}{\sim}\m N(0, 1)$ for all $k\in\mb Z_+$ and $b\in[B]$. The last recursive equation is the discretized representation of the dynamics \eqref{eqn: MFSDE} for approximating the Wasserstein proximal gradient scheme \eqref{eqn: JKO}, and $\rho^{k+1}$ can be approximated by the empirical distribution of $\{X_b^{k+1}: b\in[B]\}$.


\subsubsection{Function Approximation Method}

The SDE approach is no longer applicable for a more general internal energy functional $\m H$ beyond the negative self-entropy. Instead, we leverage the function approximation methods converting Wasserstein proximal gradient scheme~\eqref{eqn: JKO} into an optimization problem over the function space. 
Note that finding $\rho^{k+1}$ that minimizes~\eqref{eqn: JKO} is equivalent to finding a transport map $T$ such that $T_\#\rho^k$ minimizes~\eqref{eqn: JKO}. Precisely, we have the following statement.
\begin{prop}\label{prop: FA}
If $\rho^k\in\ms P_2^r$ and 
\begin{align}\label{eqn: FA_objfunc}
T^{k+1} = \argmin_{T} \m V(T_\#\rho^k) + \m H(T_\#\rho^k) + \m W(T_\#\rho^k) + \frac{1}{2\tau}\int \|T(x) - x\|^2\,\dd\rho^k,
\end{align}
then $\rho^{k+1} = T^{k+1}_\#\rho^k$ minimizes~\eqref{eqn: JKO}.
\end{prop}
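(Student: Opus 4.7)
The plan is to establish the equivalence between the JKO problem (\ref{eqn: JKO}) and the function-approximation problem (\ref{eqn: FA_objfunc}) by a short sandwich argument. Denote the JKO objective by $\m J(\rho) := \m F(\rho) + \frac{1}{2\tau}\W_2^2(\rho, \rho^k)$ and the map-based objective by $\m G(T) := \m F(T_\#\rho^k) + \frac{1}{2\tau}\int \|T(x)-x\|^2\,\dd\rho^k(x)$, where $\m F = \m V + \m H + \m W$. I will prove two inequalities: (i) $\m J(T_\#\rho^k) \leq \m G(T)$ for every measurable map $T$; and (ii) for every $\rho \in \ms P_2(\m X)$ there exists a map $T_\rho$ with $\m G(T_\rho) = \m J(\rho)$. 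Together these pin down $\inf_T \m G(T) = \inf_\rho \m J(\rho)$ and identify the optimizers.

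For (i), the product map $(\id, T)$ pushes $\rho^k$ forward to a coupling $(\id, T)_\#\rho^k \in \Pi(\rho^k, T_\#\rho^k)$, so by the infimum definition of the Wasserstein-2 distance,
\[
\W_2^2(\rho^k, T_\#\rho^k) \;\leq\; \int_{\m X} \|T(x)-x\|^2\,\dd\rho^k(x).
\]
Adding $\m F(T_\#\rho^k)$ to both sides gives $\m J(T_\#\rho^k) \leq \m G(T)$. For (ii), the hypothesis $\rho^k \in \ms P_2^r$ lets us invoke Brenier's theorem (as recalled in Section~\ref{sec: preliminary}): for every $\rho \in \ms P_2(\m X)$ there is a unique optimal transport map $T_{\rho^k}^{\rho}$ with $(T_{\rho^k}^{\rho})_\#\rho^k = \rho$ and $\int \|T_{\rho^k}^{\rho} - \id\|^2\,\dd\rho^k = \W_2^2(\rho^k, \rho)$. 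Substituting this particular $T$ into $\m G$ achieves equality in the inequality of step (i), giving $\m G(T_{\rho^k}^{\rho}) = \m J(\rho)$.

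With both inequalities in hand, let $\rho^\diamond$ be any JKO minimizer. Chaining (i), the optimality of $T^{k+1}$ for $\m G$, and (ii) applied to $\rho^\diamond$ yields
\[
\m J(T^{k+1}_\#\rho^k) \;\leq\; \m G(T^{k+1}) \;\leq\; \m G(T_{\rho^k}^{\rho^\diamond}) \;=\; \m J(\rho^\diamond).
\]
By the minimality of $\rho^\diamond$, equality holds throughout, so $T^{k+1}_\#\rho^k$ is itself a minimizer of $\m J$, which is the claimed identity $\rho^{k+1} = T^{k+1}_\#\rho^k$. There is no serious obstacle: the entire argument rests on the fact that the infimum in the definition of $\W_2^2$ is attained by a deterministic (Monge) coupling whenever the source measure is absolutely continuous, which is exactly the hypothesis $\rho^k \in \ms P_2^r$.
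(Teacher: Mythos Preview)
Your proof is correct and follows essentially the same sandwich argument as the paper: both use that any map $T$ induces a coupling (giving $\m J(T_\#\rho^k)\le\m G(T)$), that Brenier's theorem supplies a map achieving equality for each target measure, and then chain these with the optimality of $T^{k+1}$. Your presentation is slightly more explicit in naming the two inequalities, but the logic is identical.
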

We highlight the optimization problem~\eqref{eqn: FA_objfunc} is \emph{unconstrained}. By Brenier's Theorem, the optimal transport map from $\rho^k$ to $\rho^{k+1}$ is the gradient of a convex function. \citet{mokrov2021large} consider the constrained optimization problem by restricting $T = \nabla\psi$ to the gradient of a convex function $\psi$ and finding the optimal convex function $\psi^{k+1}$ by using input-convex neural networks (ICNN)~\citep{pmlr-v70-amos17b}. On the contrary, Proposition~\ref{prop: FA} shows that an unconstrained optimization problem is enough; directly minimizing the objective functional~\eqref{eqn: FA_objfunc} yields the optimal transport map $T^{k+1}$ from $\rho^k$ to $\rho^{k+1}$. The intuition is that, when $T^{k+1}$ is not the optimal transport map, changing $T^{k+1}$ to the optimal transport map $\widetilde T^{k+1}$ from $\rho^k$ to $\rho^{k+1}$ does not change the first three functional values in~\eqref{eqn: FA_objfunc} but decreases the last term, which contradicts to the optimality of $T^{k+1}$.

In practice, the optimization problem in Proposition~\ref{prop: FA} over the function space can be solved by function approximation methods such as kernel methods and neural networks. Moreover, since we do not have access to the time-evolving density $\rho^k$, we still need to approximate the objective functional in Proposition~\ref{prop: FA}. Suppose $\{X_b^k: b\in[B]\}$ are samples drawn from $\rho^k$. We provide two concrete examples, which will be used later in Section~\ref{sec:apps}, to show how to approximate the objective functional in Proposition~\ref{prop: FA}. The details will be postponed to Appendix~\ref{app: FA_detail}.

\begin{ex}[Approximating negative self-entropy]\label{ex: negative_entropy}
For $\m H(\rho^k) = \int\rho^k\log\rho^k$, we can consider the optimization problem
\begin{align*}
T^{k+1} &= \argmin_{T} \frac{1}{B}\sum_{b=1}^B V(T(X_b^k)) - \frac{1}{B}\sum_{b=1}^B \log\,\lvert\det\nabla T(X_b^k)\rvert\\
&\qquad\qquad\qquad + \frac{1}{B^2}\sum_{b,b'=1}^B W\big(T(X_b^k), T(X_{b'}^k)\big) + \frac{1}{2B\tau}\sum_{b=1}^B \|T(X_b^k) - X_b^k\|^2.
\end{align*}
\qed
\end{ex}

\begin{ex}[Approximating porous medium type diffusion]\label{ex: porous_medium}
For $\m H(\rho^k) = \int(\rho^k)^n$, we can consider the following optimization problem
\begin{align}\label{eqn: FA_generalh}
\begin{aligned}
T^{k+1} &= \argmin_{T} \frac{1}{B}\sum_{b=1}^B V(T(X_b^k)) + \frac{1}{B}\sum_{b=1}^B \big[\wht\rho_{\rm kde}^k(X_b^k)\cdot\lvert\det\nabla T(X_b^k)\rvert^{-1}\big]^{n-1}\\
&\qquad\qquad\qquad + \frac{1}{B^2}\sum_{b,b'=1}^B W\big(T(X_b^k), T(X_{b'}^k)\big) + \frac{1}{2B\tau}\sum_{b=1}^B \|T(X_b^k) - X_b^k\|^2,
\end{aligned}
\end{align}
where $\wht\rho_{\rm kde}^k$ is the kernel density estimation of $\rho^k$ by $\{X_b^k: b\in[B]\}$.
\qed
\end{ex}

\begin{rem}[Comparison between FA and SDE approaches]
    Compared with the SDE approach, the FA approach can be applied when the diffusion term $\m H_j$ is beyond the negative self-entropy. According to our numerical results in Section~\ref{sec:apps}, the performance of the FA approach will not be affected when the system contains super-quadratic drift terms. 
    However, smaller step size has to be chosen to apply the SDE approach, since $\tau$ needs to be smaller than $O(L_{\rm c}^{-1})$, where $L_{\rm c} = \max_{j\in[m], x\in\m X}|\!|\!|\nabla_j^2 V(x)|\!|\!|_{\rm op}$) is defined in the discussion after Assumption B (Section~\ref{sec:WPCG-conv}), so that the explicit discretization of the SDE converges. It then takes more iterations for the SDE approach to converge due to this requirement on a smaller step size. An additional attractive aspect of the FA approach is its unbiasedness for any step size $\tau$. This is due to the fact that any fixed-point solution (from a distributional perspective) to its iterative formula must correspond to a global minimum of $\mathcal F$. In comparison, the SDE approach often exhibits bias---its fixed point usually deviates by $O(\sqrt{\tau})$ from a global minimum of $\mathcal F$.
    Although choosing a decreasing step size sequence when discretizing the SDE~\eqref{eqn: MFSDE} can alleviate the constant bias, it does so at the expense of convergence speed. This is evidenced by the slow convergence observed in Figure~\ref{fig: optim_err_superquad}, which persists even for the largest permissible step size due to the existence of a super-quadratic error term.
    \qed
\end{rem}

\section{Convergence Analysis of WPCG Algorithms}\label{sec:WPCG-conv}

In this section, we derive the convergence rates of the WPCG (-P, -S, -R) algorithms. First, we shall make the following assumptions and discuss their implications. These assumptions are assumed to hold throughout the rest of paper unless otherwise specified.

\paragraph{Assumption A (Internal energy).} All $h_1, h_2, \dots, h_m \in C^1(\mb R_+)$ are convex and satisfy:
\begin{enumerate}[topsep=1pt,itemsep=-1ex,partopsep=1ex,parsep=2ex]
\item For some $\alpha > \frac{d_j}{d_j+2}$, it holds
\begin{align*}
    h_j(0) = 0, \quad \liminf_{x_j\downarrow 0_+}\frac{h_j(x_j)}{x_j^\alpha} > -\infty;
\end{align*}
\item the map $x_j\mapsto x_j^{d_j}h_j(x_j^{-d_j})$ is convex and non-increasing in $\mb R_+$;
\item $h_j(\rho_j)\in L^1(\m X_j)$ implies $\rho_j h_j'(\rho_j) \in L^1(\m X_j)$.
\end{enumerate}
\vspace{0.5em}

The first condition on $h_j$ implies that the negative part $h_j(\rho_j)$ is integrable. Details are referred to Remark 3.9 in \citep{ambrosio2007gradient}. The second condition implies the convexity of $h_j$ along geodesics \citep{ambrosio2007gradient, santambrogio2015optimal}. The third condition guarantees that the first variation of $\m H_j$ at $\rho_j$ admits the explicit form as $h_j'(\rho_j)$ (Lemma \ref{lem: ot_map} in Appendix). For example, $h_j(x_j) = x_j\log x_j$ (corresponding to negative self-entropy) and $h_j(x_j) = x_j^{m_j}$ with $m_j > 1$ (corresponding to porous medium type diffusion) satisfy all these three conditions.

\paragraph{Assumption B (Potential energy).} There exists $L > 0$ such that 
\begin{align*}
\big\|\nabla_j V(x_j, x_{-j}) - \nabla_j V(x_j, x_{-j}')\big\| \leq L\|x_{-j} - x_{-j}'\|,\quad\forall\, j\in[m].
\end{align*}\vspace{0.5em}

This Lipschitz constant is different from the coordinate Lipschitz constant $L_{\rm c}$ used by \cite{wright2015coordinate} in $\|\nabla_j{V}(y_j, x_{-j}) - \nabla_j{V}(z_j, x_{-j})\| \leq L_{\rm c}\|y_j - z_j\|$ for all $j\in[m]$. A finite $L_{\rm c}$ guarantees the smoothness of $V$ on each coordinate, which helps control the change of the function value of $V$ after each iteration through the gradient $\nabla V$.
\cite{wright2015coordinate} also defined the restricted Lipschitz constant $L_{\rm r}$ as $\|\nabla{V}(y_j, x_{-j}) - \nabla{V}(z_j, x_{-j})\| \leq L_{\rm r}\|y_j - z_j\|$ to study the convergence property of asynchronous coordinate GD. We also let $L_{\rm g}$ be the global Lipschitz constant defined through $\|\nabla {V}(x) - \nabla{V}(y)\| \leq L_{\rm g}\|x-y\|$, which is commonly used in literature of GD. In fact, we have $0 < L/L_{\rm c} \leq \sqrt{m-1}$, $0\leq L/L_{\rm r}\leq \sqrt{1 - 1/m}$, and $0\leq L/L_{\rm g} \leq 1$. The proof of these connections is provided in Appendix~\ref{app: connection}.

From the above connections, we can see that our Lipschitz assumption is the weakest, which only requires a control on the off-diagonal term of $\nabla^2V$ (when $V$ is twice differentiable). 
This difference is due to our choice of the proximal-type optimization algorithm, where only the magnitude of interaction (off-diagonal) components in $V$ matters in the sense that changing the target function from $V(x)+\sum_{j=1}^m f_j(x_j)$ into $V$ for any univariate functions $\{f_j\}_{j=1}^m$ does not affect the convergence of a proximal coordinate descent algorithm. Technically, this irrelevance of marginal (diagonal) components is due to the reason that convergence of a proximal gradient algorithm for minimizing a univariate function does not require the smoothness (gradient) condition. However, the interaction component, which incurs the overshooting when alternating the coordinate-wise steepest descent direction, cannot be eliminated by using the proximal scheme.

\paragraph{Assumption C (Self-interaction energy).} There are functions $f_j$ and $g_j$ such that both
\begin{align*}
\widetilde{V}(x) \coloneqq V(x) - \sum_{j=1}^m\big[f_j(x_j) + g_j(x_j)\big] \quad\!\mx{and}\quad\! \widetilde{W}_j(x_j, x_j') \coloneqq f_j(x_j) + W_j(x_j, x_j') + g_j(x_j')
\end{align*}
are convex for all $j\in[m]$.

\vspace{0.5em}
  When $W_j$ is symmetric, we can simply take $f_j = g_j$ for all $j\in[m]$. Without loss of generality, in the remaining of this paper, we will only consider the case where $f_j = g_j = 0$ for all $j\in[m]$. For the general case, note that the functional value $\m F(\rho_{1:m})$ in~\eqref{eqn: obj_func} remains the same while replacing $V$ and $W_j$ with $\widetilde V$ and $\widetilde W_j$, respectively. Thus our proof with $f_j = g_j=0$ can be easily extended to the general case. This freedom of allocating the marginal components between the potential energy and the self-interaction energy in our analysis is again due to the fact that the convergence of the proximal coordinate algorithm is only affected by the interaction components within $V$ (cf. discussions after Assumption B).


\paragraph{Assumption D (Overall growth).}\label{assump: QG} The target functional $\m F$ satisfies the following $\lambda$-quadratic growth ($\lambda$-QG) condition: for $\lambda \geq 0$,
\begin{align}\label{eqn: QG}
    \m F(\rho_{1:m}) - \m F(\rho_{1:m}^\ast) \geq \frac{\lambda}{2}\sum_{j=1}^m\W_2^2(\rho_j, \rho_j^\ast), \quad\forall\,\rho_j\in\ms P_2^r(\m X_j).
\end{align}\vspace{0.5em}

 When $\lambda > 0$, the $\lambda$-QG condition guarantees the uniqueness of the solution to the optimization problem \eqref{eqn: obj_func}. When $\lambda = 0$, the minimizer of \eqref{eqn: obj_func} may not be unique, but the functional value convergences to $\m F(\rho^\ast_{1:m})$ for any global minimizer $\rho^\ast_{1:m}$ of~$\eqref{eqn: obj_func}$ (cf.~Theorem \ref{thm: convex_case}).
On the Euclidean space, it is known that $\lambda$-strong convexity is stronger than $\lambda$-QG condition plus convexity, which together implies a PL inequality \citep{karimi2016linear}. A similar result holds in the Wasserstein setting. In fact, the following proposition shows that Assumption D holds when $V$ is $\lambda$-strongly convex, which is a sufficient condition for $\m F$ being strongly convex. Its proof is provided in Appendix~\ref{appendix: lemmas}.
\begin{prop}\label{prop: suff_cond_QG}
Assumption D holds when $V$ is $\lambda$-strongly convex. 
\end{prop}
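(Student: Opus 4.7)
My plan is to deduce the $\lambda$-QG condition from a stronger intermediate claim: that $\mathcal F$ is itself blockwise $\lambda$-strongly convex in the sense defined in Section~\ref{sec: convex_functional}. Once this is established, the passage to $\lambda$-QG is the standard convex-analysis argument.

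\textbf{Step 1 (reduce to $\mathcal V$).} Since $h_j$ and $W_j$ are convex (Assumptions A, C), each $\mathcal H_j$ and $\mathcal W_j$ is geodesically convex along the one-dimensional Wasserstein geodesics, hence their sum is trivially blockwise geodesically convex. So it suffices to prove that $\mathcal V$ is blockwise $\lambda$-strongly convex.

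\textbf{Step 2 (strong convexity of $\mathcal V$ via product couplings).} Fix $\mu_j^0,\mu_j^1\in\mathcal P_2(\mathcal X_j)$ and let $\gamma_j\in\Pi_o(\mu_j^0,\mu_j^1)$, so the constant-speed geodesic is $\mu_j^t=(\pi_t)_\#\gamma_j$ with $\pi_t=(1-t)\pi^0+t\pi^1$. The key observation is that, by Fubini, integrating $V$ against the product $\mu_1^t\otimes\cdots\otimes\mu_m^t$ equals integrating $V((1-t)x^0+tx^1)$ against the product coupling $\gamma_1\otimes\cdots\otimes\gamma_m$ on $\prod_j(\mathcal X_j\times\mathcal X_j)$. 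Applying the $\lambda$-strong convexity of $V$ on $\mathcal X=\bigotimes_j\mathcal X_j$ pointwise,
\begin{equation*}
V\bigl((1-t)x^0+tx^1\bigr)\le (1-t)V(x^0)+tV(x^1)-\tfrac{\lambda}{2}t(1-t)\textstyle\sum_{j=1}^m\|x_j^0-x_j^1\|^2,
\end{equation*}
and integrating against the product coupling, using $\int\|x_j^0-x_j^1\|^2\,d\gamma_j=W_2^2(\mu_j^0,\mu_j^1)$ by optimality, yields exactly the blockwise $\lambda$-strong convexity inequality for $\mathcal V$. Adding the geodesically convex pieces from Step 1 gives blockwise $\lambda$-strong convexity of $\mathcal F$.

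\textbf{Step 3 (strong convexity $\Rightarrow$ QG).} Let $\rho^*_{1:m}$ be a global minimizer and $\rho_{1:m}$ arbitrary. Applying the blockwise strong convexity inequality on the geodesics from $\rho_j^*$ to $\rho_j$, $j\in[m]$, at interpolation parameter $t\in(0,1]$,
\begin{equation*}
\mathcal F(\rho^t_{1:m})\le (1-t)\mathcal F(\rho^*_{1:m})+t\mathcal F(\rho_{1:m})-\tfrac{\lambda}{2}t(1-t)\textstyle\sum_{j=1}^m W_2^2(\rho_j^*,\rho_j).
\end{equation*}
Since $\mathcal F(\rho^t_{1:m})\ge \mathcal F(\rho^*_{1:m})$, rearranging and dividing by $t$ gives $\mathcal F(\rho_{1:m})-\mathcal F(\rho^*_{1:m})\ge \tfrac{\lambda}{2}(1-t)\sum_{j=1}^m W_2^2(\rho_j^*,\rho_j)$; letting $t\downarrow 0$ produces~\eqref{eqn: QG}.

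\textbf{Anticipated obstacle.} The only nontrivial point is Step 2: one must justify that the product of optimal plans $\gamma_1\otimes\cdots\otimes\gamma_m$ is the correct object to exploit pointwise strong convexity of $V$ on the product space. This is purely a Fubini/tensorization observation, but it requires care because a \emph{product} of marginal geodesics is generally not a geodesic in the product Wasserstein space $\mathcal P_2(\mathcal X)$; fortunately, the definition of blockwise strong convexity only requires inequality along blockwise geodesics on $\prod_j\mathcal P_2(\mathcal X_j)$, which is precisely what the product-coupling argument delivers.
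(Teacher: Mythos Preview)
Your proof is correct, but the route differs from the paper's. The paper argues via the subdifferential (tangent-plane) form of $\lambda$-strong convexity at the minimizer: it writes
\[
\m F(\rho)-\m F(\rho^*)\ \ge\ \int_{\m X}\Big\langle\nabla\tfrac{\delta\m F}{\delta\rho}(\rho^*),\,T_{\rho^*}^{\rho}-\id\Big\rangle\,\dd\rho^*+\tfrac{\lambda}{2}\sum_j\W_2^2(\rho_j,\rho_j^*),
\]
expands the Wasserstein gradient blockwise, and then kills the linear term by invoking the first-order optimality condition $\nabla V_j^*+\nabla h_j'(\rho_j^*)+\nabla_{\W_2}\m W_j(\rho_j^*)=0$ at $\rho^*$ (which ultimately rests on Lemma~\ref{lem: ot_map}). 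Your argument is more elementary: you never leave the geodesic definition of strong convexity, avoid first-variation calculus entirely, and replace the FOC by the bare inequality $\m F(\rho^t)\ge\m F(\rho^*)$. This buys a cleaner, self-contained proof with fewer regularity prerequisites; the paper's version, by contrast, makes the mechanism---vanishing of the linear part at the optimum---explicit and reuses machinery already needed elsewhere. One small remark on your ``anticipated obstacle'': by the tensorization Lemma~\ref{lem: tensorization}, the product of marginal geodesics \emph{is} the geodesic in $\ms P_2(\m X)$ between product measures, so the worry you flag is in fact moot here (though your observation that blockwise strong convexity only requires the blockwise inequality is the right way to frame Step~2 regardless).
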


On the other hand, we also want to mention that the $\lambda$-QG in Assumption D is strictly weaker than $\lambda$-strong convexity. A simple illustrative example is $W_j \equiv 0$, $h_j(\rho_j) = \rho_j\log\rho_j$, and $V(x) = V_1(x_1) + \cdots + V_m(x_m)$ with 
$V_j(x_j) = \frac{\lambda}{2}\,|x_j|$ for $x_j\in[-1,1]$ and $V_j(x_j) = \frac{\lambda}{2}\,x_j^2$ elsewhere. In this case, $\m F(\rho_{1:m}) = \m F_1(\rho_1) + \cdots + \m F_m(\rho_m)$ with
\begin{align*}
    \m F_j(\rho_j) = \int_{\m X_j} V_j\,\dd\rho_j + \int \rho_j\log\rho_j, \quad\m X_j\subset\mb R.
\end{align*}
It is straightforward to verify that $\m F_j$ satisfies the $\lambda$-QG condition due to the growth rate of $V_j$ (see p.280 in~\citep{villani2021topics} for more details) and is convex along geodesics, but is not strongly convex along geodesics since $V$ is not strongly convex in $[-1, 1]^m$ (e.g. consider two absolutely continuous measures supported on $[-1, 1]^m$).

The PL inequality has an analogy on the Wasserstein space which is called log-Sobolev inequality (LSI) \citep{villani2021topics}. However, it is not easy to directly prove an LSI, mainly due to the following two reasons. Firstly, LSI is often exclusively used to study the KL divergence type objective functionals associated to the relative entropy functional, while we are considering more general objective functionals. Secondly, when we consider a multivariate functional on the Wasserstein space, the multivariate Wasserstein gradient does not tensorize, i.e.,~the sum of the norm of its blockwise Wasserstein gradient does not equal to the norm of the Wasserstein gradient of the functional treated as a univariate functional (see ahead~\eqref{eqn: tensorization_issue} in Remark~\ref{rem:WPCF-R_compare} below).


\subsection{Convergence Rates of WPCG under $\lambda$-QG Assumption}

Now, we are ready to present our main theoretical results on the convergence rates of the WPCG algorithms. For notation simplicity, we denote $\rho^* := \rho^*_{1:m}$ as a solution of the optimization problem~\eqref{eqn: obj_func} and $\rho^k := \rho^k_{1:m}$ as the solution of the WPCG algorithms (-P, -S, -R) at the $k$-th iteration when the context has no ambiguity.

\begin{thm}[{\bf Exponential convergence rate of WPCG-P under $\lambda$-QG}]
\label{thm: main_thm}
Assume Assumptions A, B, C, and D hold for some $\lambda > 0$. Let $\rho^k$ be the solution of WPCG-P in~\eqref{eqn: subproblem-P} at the $k$-th iteration. If the step size satisfies $0 < \tau < \frac{m-1/2}{L(m-1)^{3/2}}$, then
\begin{align*}
    \W_2^2(\rho^k, \rho^\ast ) \leq \frac{2}{\lambda}\big[1+C_1(m, L, \tau)\lambda\big]^{-k} \big[\m F(\rho^0) - \m F(\rho^\ast )\big], \quad\forall\, k\in\mb Z_+,
\end{align*}
where
\begin{align*}
    C_1(m, L, \tau) = \frac{\frac{1}{2\tau} + (m-1)\big(\frac{1}{\tau} - L\sqrt{m-1}\big)}{2m[2L^2(m-1) + \frac{2}{\tau^2}]} > 0.
\end{align*}
\end{thm}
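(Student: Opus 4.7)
The plan is to derive a one-step contraction on $A_k := \sum_{j=1}^m \W_2^2(\rho_j^k, \rho_j^*)$ by combining the evolution variational inequality (EVI) for each coordinate-wise proximal subproblem with the $\lambda$-QG condition, while carefully controlling the misalignment introduced by the parallel update. For each $j\in[m]$, the functional $\rho_j \mapsto \m V(\rho_j, \rho_{-j}^k) + \m H_j(\rho_j) + \m W_j(\rho_j) + \frac{1}{2\tau}\W_2^2(\rho_j, \rho_j^k)$ is $\tau^{-1}$-strongly convex along generalized geodesics based at $\rho_j^k$ (from Assumptions~A, B, C combined with the $\tau^{-1}$-strong convexity of the squared Wasserstein penalty), so the standard three-point inequality tested at $\rho_j^*$ gives
\begin{align*}
\Psi_j(\rho_j^*, \rho_{-j}^k) - \Psi_j(\rho_j^{k+1}, \rho_{-j}^k) \geq \frac{1}{2\tau}\bigl[\W_2^2(\rho_j^{k+1}, \rho_j^k) + \W_2^2(\rho_j^*, \rho_j^{k+1}) - \W_2^2(\rho_j^*, \rho_j^k)\bigr],
\end{align*}
where $\Psi_j := \m V(\cdot, \rho_{-j}^k) + \m H_j + \m W_j$.

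Summing these inequalities over $j$ and absorbing $\sum_j(\m H_j + \m W_j)$ into $\m F$ gives the master inequality
\begin{align*}
\m F(\rho^{k+1}) - \m F(\rho^*) + \frac{A_{k+1} + C_k}{2\tau} \leq \frac{A_k}{2\tau} + E_k,
\end{align*}
where $C_k := \sum_j \W_2^2(\rho_j^{k+1}, \rho_j^k)$, and the \emph{parallelization error}
\begin{align*}
E_k := [\m V(\rho^{k+1}) - \m V(\rho^*)] - \sum_{j=1}^m[\m V(\rho_j^{k+1},\rho_{-j}^k) - \m V(\rho_j^*,\rho_{-j}^k)]
\end{align*}
measures the cost of evaluating $\m V$ at the stale marginals $\rho_{-j}^k$ instead of the freshly updated $\rho_{-j}^{k+1}$. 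For sequential updates a similar telescoping argument would kill this term, but in WPCG-P it must be controlled directly via Assumption~B.

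Bounding $|E_k|$ is the main technical obstacle. The idea is to introduce independent triplets $(X_j^k, X_j^{k+1}, X_j^*)$ -- one per coordinate -- under a $3$-way coupling whose pairwise marginals are optimal, and to consider the auxiliary function
\begin{align*}
\psi(y) := V(y) - \sum_{j=1}^m V(y_j, X_{-j}^k),
\end{align*}
for which $E_k = \mb E[\psi(X^{k+1}) - \psi(X^*)]$. The key algebraic observation is that $\nabla\psi(X^k) = 0$, since $\nabla_i\psi(y) = \nabla_i V(y) - \nabla_i V(y_i, X_{-i}^k)$ vanishes at $y = X^k$. Assumption~B then yields $\|\nabla_i\psi(y)\| \leq L\,\|y_{-i} - X_{-i}^k\|$, and squaring and summing over $i\in[m]$ produces the crucial estimate $\|\nabla\psi(y)\| \leq L\sqrt{m-1}\,\|y - X^k\|$. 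Representing $\psi(X^{k+1}) - \psi(X^*) = \int_0^1 \langle\nabla\psi(y_s), X^{k+1}-X^*\rangle\,ds$ along $y_s := (1-s)X^* + sX^{k+1}$, then applying Cauchy--Schwarz and Young's inequality with a parameter chosen to balance the $L\sqrt{m-1}$ from the mixed partials against the $1/\tau$ from the proximal step, yields a bound of the form $|E_k| \leq \kappa_1(m,L,\tau) A_{k+1} + \kappa_2(m,L,\tau) A_k + \kappa_3(m,L,\tau) C_k$. The delicate task is to pick the coupling and the Young parameter so that, after substitution into the master inequality, the coefficient of $C_k$ turns non-positive exactly under the step-size condition $\tau < (m-\tfrac12)/[L(m-1)^{3/2}]$, and the combination $L^2(m-1) + 1/\tau^2$ found in the denominator of $C_1(m,L,\tau)$ emerges naturally from this Young balance.

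Finally, apply the $\lambda$-QG condition (Assumption~D), $\m F(\rho^{k+1}) - \m F(\rho^*) \geq \tfrac{\lambda}{2}A_{k+1}$, and substitute the bound on $|E_k|$ into the master inequality. Under the stated step-size restriction the $C_k$-coefficient is non-positive and can be dropped, and collecting coefficients on $A_{k+1}$ and $A_k$ produces the one-step contraction $A_{k+1} \leq (1 + C_1(m,L,\tau)\lambda)^{-1} A_k$ with $C_1$ precisely as stated. Iterating over $k$ gives $A_k \leq (1+C_1\lambda)^{-k}A_0$, and a final application of $\lambda$-QG at iteration $0$, $A_0 \leq \tfrac{2}{\lambda}[\m F(\rho^0) - \m F(\rho^*)]$, delivers the bound claimed in the theorem.
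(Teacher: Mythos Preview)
Your EVI-based route is genuinely different from the paper's, but it contains a real gap that prevents it from proving the theorem for arbitrary $\lambda>0$.

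The master inequality you derive is correct, and so is the bound $\|\nabla\psi(y)\|\le L\sqrt{m-1}\,\|y-X^k\|$. However, once you integrate this along the segment and apply Young's inequality, any honest estimate of $E_k$ produces a term of order $L\sqrt{m-1}\,A_k$ (and/or $L\sqrt{m-1}\,C_k$) on the right-hand side. For instance, since $\nabla\psi(X^k)=0$ one has $|\psi(y)-\psi(X^k)|\le \tfrac{L\sqrt{m-1}}{2}\|y-X^k\|^2$, hence $E_k\le \tfrac{L\sqrt{m-1}}{2}(A_k+C_k)$. Substituting this into the master inequality and invoking $\m F(\rho^{k+1})-\m F(\rho^\ast)\ge \tfrac{\lambda}{2}A_{k+1}$ gives
\[
\Bigl(\tfrac{\lambda}{2}+\tfrac{1}{2\tau}\Bigr)A_{k+1}\;\le\;\Bigl(\tfrac{1}{2\tau}+\tfrac{L\sqrt{m-1}}{2}\Bigr)A_k,
\]
after dropping the nonnegative $C_k$-term (which already forces $\tau\le 1/(L\sqrt{m-1})$, a slightly different restriction than the stated one). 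This is a contraction only when $\lambda>L\sqrt{m-1}$; for small $\lambda$ the ratio exceeds $1$ and the argument fails. No choice of Young parameter or coupling can remove the $O(L\sqrt{m-1})A_k$ contribution, because it is intrinsic to the size of $E_k$. Consequently the claimed one-step contraction $A_{k+1}\le (1+C_1\lambda)^{-1}A_k$ cannot hold in general, and the precise constant $C_1$ certainly does not emerge from this route. (As a side note, a $3$-coupling with \emph{all three} pairwise marginals optimal generally does not exist; gluing only guarantees two.)

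The paper's proof avoids this obstruction by working with function values rather than $A_k$. It first shows a \emph{sufficient decrease} estimate $\m F(\rho^{k-1})-\m F(\rho^k)\ge C_4\,\W_2^2(\rho^{k-1},\rho^k)$ via the first-order optimality condition of each subproblem; this step uses only convexity and Assumption~B, not $\lambda$. It then shows $\m F(\rho^k)-\m F(\rho^\ast)\le \tfrac{2}{\lambda}\bigl(2L^2(m-1)+\tfrac{2}{\tau^2}\bigr)\W_2^2(\rho^k,\rho^{k-1})$ by bounding the blockwise Wasserstein gradient norm of $\m F$ at $\rho^k$ in terms of the last step and then using QG to eliminate $\W_2(\rho^k,\rho^\ast)$. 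Chaining these two gives a contraction on $\m F(\rho^k)-\m F(\rho^\ast)$ with rate $(1+\lambda C_1)^{-1}$, where $\lambda$ enters only multiplicatively; QG is applied once more at the end to pass to $\W_2^2(\rho^k,\rho^\ast)$. This decoupling of the descent step from the $\lambda$-dependent step is exactly what your direct-contraction approach lacks.
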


\begin{rem}[{\bf Comments on the step size and iteration complexity of WPCG-P}]
\label{rem: parallel_strongly_convex}
(i) By taking $\tau^{-1} = KL\sqrt{m-1}$ for $K>1$, we have $C_1(m, L, \tau) \geq \frac{(K-1)\sqrt{m-1}}{4L(K^2+1)m}$. Therefore, the best iteration complexity based on our theory for achieving $\varepsilon$-accuracy is 
\begin{align*}
    \frac{\log\frac{2[\m F(\rho^0) - \m F(\rho^\ast)]}{\lambda\varepsilon}}{\log(1 + \lambda C_1(m, L, \tau))} \leq \frac{\log\frac{2[\m F(\rho^0) - \m F(\rho^\ast)]}{\lambda\varepsilon}}{\log(1 + \frac{\lambda(K-1)\sqrt{m-1}}{4L(K^2+1)m})} \leq \frac{4L(K^2+1)m}{\lambda(K-1)\sqrt{m-1}}\cdot\frac{\log\frac{2[\m F(\rho^0) - \m F(\rho^\ast)]}{\lambda\varepsilon}}{1 - \frac{\lambda(K-1)\sqrt{m-1}}{8L(K^2+1)m}},
\end{align*}
i.e., WPCG-P requires at most $O(\frac{\sqrt{m}L}{\lambda} \log({1 \over \lambda \varepsilon}))$ iterations to achieve $\varepsilon$-accuracy when the step size $\tau$ is on the scale $O(\frac{1}{L\sqrt{m}})$. [In the sequel, we will drop the logarithmic factor in the iteration complexity when exponential convergence is achieved.]
(ii) Both the upper bound of the step size $\tau$ and the smoothness condition of $V$ are necessary for the convergence of the parallel update scheme. The following Example~\ref{eg: parallel_CD} shows that the upper bound $O(\frac{1}{L\sqrt{m}})$ of the step size $\tau$ cannot be relaxed, i.e.,~there exists $V$, $\m H_j$, and $\m W_j$ such that WPCG-P diverges when $\tau$ exceeds the scale $O(\frac{1}{L\sqrt{m}})$. Moreover, when we choose $\tau$ on this scale, the iteration complexity is exactly $O(\frac{\sqrt{m}L}{\lambda})$ which coincides with the result in the previous discussion.
\qed
\end{rem}

\begin{ex}[{\bf Optimality of step size and convergence rate of WPCG-P}]\label{eg: parallel_CD}
Consider the case where $V(x) = \frac{1-\alpha}{2}\|x\|^2 + \frac{\alpha}{2}(x_1 + \cdots + x_m)^2$ and $W_j = h_j = 0$ so that $\rho_j^\ast =\delta_0$, the point mass measure at $0$ for each $j\in[m]$. In this case, when we initialize $\rho_j^0$ to a point mass measure for all $j\in[m]$, the optimization problem~\eqref{eqn: obj_func} on the Wasserstein space degenerates into an optimization problem on $\mb R^m$ with objective function $V$. The (gradient) Lipschitz constant of $V$ in Assumption B is $L = \alpha\sqrt{m-1}$. In each iteration, the update scheme in WPCG-P algorithm is equivalent to solve
\begin{align*}
    x_j^{k+1} &= \argmin_{x_j\in\mb R} \bigg\{V(x_j, x_{-j}^k) + \frac{(x_j - x_j^k)^2}{2\tau}\bigg\}\\
    &= \argmin_{x_j\in\mb R} \bigg\{\frac{1-\alpha}{2}x_j^2 + \frac{\alpha}{2}(x_j + s_{-j}^k)^2 + \frac{(x_j-x_j^k)^2}{2\tau}\bigg\}\,,
\end{align*}
where we use the shorthand $s^k = x_1^k + \cdots + x_m^k$ to denote the sum of all coordinates at the $k$-th iterate, and $s_{-j}^k = s^k - x_j^k$. A direct calculation for this example reveals that
\begin{align*}
    s^k = \Big(\frac{\tau^{-1} - (m-1)\alpha}{1+\tau^{-1}}\Big)^k s^0.
\end{align*}
Therefore, $s^k$ only converges as $k\to\infty$ when $|\tau^{-1} - (m-1)\alpha| \leq |1+\tau^{-1}|$, which is equivalent to 
\begin{align*}
    \tau \leq \frac{2}{(m-1)\alpha - 1} = O\Big(\frac{1}{L\sqrt{m}}\Big).
\end{align*}
This calculation shows that our upper bound requirement on step size $\tau$ is necessary and tight.
To study the tightness of the convergence rate implied by the theorem, we note that a direct calculation leads to
\begin{align*}
\begin{pmatrix}
x_1^{k+1}\\ \vdots\\ x_m^{k+1}
\end{pmatrix}
= \frac{1}{1+\tau}
\begin{pmatrix}
1 & -\alpha\tau & \cdots & -\alpha\tau\\
-\alpha\tau & 1 & \cdots & -\alpha\tau\\
\vdots & \vdots & \ddots & \vdots\\
-\alpha\tau & -\alpha\tau & \cdots & 1
\end{pmatrix}
\begin{pmatrix}
x_1^k\\ \vdots\\ x_m^k
\end{pmatrix}
\eqqcolon A_{\rm p}x^k.
\end{align*}
It is easy to verify that the $m$ eigenvalues of the symmetric matrix $A_{\rm p}$ are
\begin{align*}
    \lambda_1 = \cdots = \lambda_{m-1} = \frac{1+\alpha\tau}{1+\tau},\quad \lambda_m = \frac{1+\alpha\tau - \alpha\tau m}{1+\tau}.
\end{align*}
Under the optimal step size (modulo constants) $\tau = 1/(L\sqrt{m-1}) = 1/(\alpha (m-1))$, we have
\begin{align*}
    \lambda_1 = \cdots = \lambda_{m-1} = \frac{m}{m-1+\frac{1}{\alpha}},\quad \lambda_m = 0.
\end{align*}
Therefore, we can always find some initialization $x^0\in\mb R^m$ (e.g.,~any eigenvector associated with $\lambda_1$) such that
\begin{align*}
    \|x^k\| = \lambda_1^k \|x^0\| = \Big(1 - \frac{1-\alpha}{m\alpha +1 - \alpha}\Big)^k\|x^0\|.
\end{align*}
Under this initialization, the algorithm takes $O(m\alpha/(1-\alpha)) = O(L\sqrt{m}/\lambda)$ iterations to converge. This example shows that our convergence rate bound is unimprovable when $\tau$ is $O(\frac{1}{L\sqrt{m}})$.
\qed
\end{ex}

Next, we establish the exponential convergence rate of WPCG-S.
 
\begin{thm}[{\bf Exponential convergence rate of WPCG-S under $\lambda$-QG}]
\label{thm: sequential}
Assume Assumptions A, B, C, and D hold for some $\lambda > 0$. Let $\rho^k$ be the solution of WPCG-S at the $k$-th iteration. Then for any step size $\tau > 0$ we have
\begin{align*}
    \W_2^2(\rho^k, \rho^\ast) \leq \frac{2}{\lambda}\big[1 + \lambda C_2(m, L, \tau)\big]^{-k}\Big(\m F(\rho^0) - \m F(\rho^\ast)\Big),
\end{align*}
where
\begin{align*}
    C_2(m, L, \tau) = \Big(8\tau L^2(m-1) + 8\tau^{-1}\Big)^{-1} > 0.
\end{align*}
\end{thm}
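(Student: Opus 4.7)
The plan is to adapt the standard PL-template for proximal gradient to the product Wasserstein space, exploiting two features of the sequential scheme: the $\m V$-contributions telescope exactly across the inner updates, and the Wasserstein gradient of $\m F$ at $\rho^{k+1}$ only differs from the proximal residual of each inner step in the not-yet-updated blocks. Define intermediate states $\rho^{k,j}=(\rho_1^{k+1},\ldots,\rho_j^{k+1},\rho_{j+1}^k,\ldots,\rho_m^k)$, so that $\rho^{k,0}=\rho^k$ and $\rho^{k,m}=\rho^{k+1}$, and let $\m G_j^{k,j-1}$ denote the inner-step objective obtained from $\m F$ by freezing all coordinates but the $j$-th at $\rho_{-j}^{k,j-1}$; Assumptions A and C render it geodesically convex on $\ms P_2^r(\m X_j)$. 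The proximal three-point inequality for $\rho_j^{k+1}=\argmin\{\m G_j^{k,j-1}(\cdot)+\frac{1}{2\tau}\W_2^2(\cdot,\rho_j^k)\}$ with test point $\rho_j^k$, summed over $j$ with the $\m V$-contributions telescoping, yields the sufficient-decrease estimate
\[
    \m F(\rho^k)-\m F(\rho^{k+1})\;\geq\;\tau^{-1}\sum_{j=1}^m\W_2^2(\rho_j^{k+1},\rho_j^k).
\]

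\textbf{Gradient-to-displacement bound.} The first-order optimality of each inner step identifies $\nabla\frac{\delta\m G_j^{k,j-1}}{\delta\rho_j}(\rho_j^{k+1})=-\tau^{-1}(\id-T_{\rho_j^{k+1}}^{\rho_j^k})$. Writing $\xi_j$ for the block-$j$ Wasserstein gradient (strong subdifferential) of $\m F$ at $\rho^{k+1}$, the residual $\xi_j+\tau^{-1}(\id-T_{\rho_j^{k+1}}^{\rho_j^k})$ equals the change in $\nabla_j\int V(\cdot,x_{-j})\,\dd\rho_{-j}$ caused by swapping $\rho_{-j}^{k,j-1}$ for $\rho_{-j}^{k+1}$; since these two configurations agree on coordinates $i<j$, only the unupdated blocks $i>j$ contribute. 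Assumption B together with an optimal product coupling then yields, uniformly in $x_j$,
\[
    \bigl\|\xi_j(x_j)+\tau^{-1}\bigl(x_j-T_{\rho_j^{k+1}}^{\rho_j^k}(x_j)\bigr)\bigr\|\;\leq\;L\sqrt{\sum_{i>j}\W_2^2(\rho_i^{k+1},\rho_i^k)}.
\]
Integrating against $\rho_j^{k+1}$, applying $(a+b)^2\leq 2a^2+2b^2$, and summing over $j$ (using $\sum_j\sum_{i>j}\leq(m-1)\sum_i$) produces the key bound
\[
    \sum_{j=1}^m\|\xi_j\|_{L^2(\rho_j^{k+1})}^2\;\leq\;2\bigl(\tau^{-2}+L^2(m-1)\bigr)\sum_{j=1}^m\W_2^2(\rho_j^{k+1},\rho_j^k).
\]

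\textbf{PL inequality, contraction, and conclusion.} Geodesic convexity of $\m F$ and Cauchy--Schwarz give $\m F(\rho^{k+1})-\m F(\rho^*)\leq\sqrt{\sum_j\|\xi_j\|_{L^2(\rho_j^{k+1})}^2}\cdot\sqrt{\sum_j\W_2^2(\rho_j^{k+1},\rho_j^*)}$; invoking Assumption D on the second factor and squaring promotes this to the Wasserstein PL-type inequality $\m F(\rho^{k+1})-\m F(\rho^*)\leq\frac{2}{\lambda}\sum_j\|\xi_j\|_{L^2(\rho_j^{k+1})}^2$. Chaining with the gradient-to-displacement bound and the sufficient-decrease estimate gives
\[
    \m F(\rho^{k+1})-\m F(\rho^*)\;\leq\;\tfrac{4(\tau^{-1}+L^2\tau(m-1))}{\lambda}\bigl[\m F(\rho^k)-\m F(\rho^{k+1})\bigr],
\]
which rearranges to $\m F(\rho^{k+1})-\m F(\rho^*)\leq(1+2\lambda C_2)^{-1}\bigl[\m F(\rho^k)-\m F(\rho^*)\bigr]$, a bound stronger than the stated recursion and hence sufficient. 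Iterating this contraction and invoking Assumption D one final time through $\sum_j\W_2^2(\rho_j^k,\rho_j^*)\leq\frac{2}{\lambda}[\m F(\rho^k)-\m F(\rho^*)]$ yields the Wasserstein estimate of the theorem. The main obstacle is the gradient-to-displacement bound: recognising $\xi_j$ as a perturbation of the already-known proximal residual (rather than treating it as an abstract Wasserstein gradient), and exploiting that this perturbation is driven only by the unupdated blocks $i>j$, is precisely what removes the step-size ceiling imposed in Theorem~4 and produces the factor $m-1$ (rather than $m$) inside $C_2$.
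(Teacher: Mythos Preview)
Your proposal is correct and follows essentially the same two-step architecture as the paper: a sufficient-decrease bound (paper's Lemma~\ref{lem: sufficient_decrease_sequential}) combined with a functional-value bound obtained by controlling the blockwise Wasserstein gradient through the proximal residuals (paper's Lemmas~\ref{lem: bound_func_value} and~\ref{lem: relative_error_cond}, sequential case). The only substantive difference is your sufficient-decrease constant: you invoke the proximal three-point inequality (valid here because, under Assumptions~A and~C, each inner functional $\m G_j^{k,j-1}$ is convex along \emph{generalized} geodesics) to obtain the factor $\tau^{-1}$, whereas the paper uses the plain minimizer inequality and gets only $\tfrac{1}{2\tau}$; this is what produces your sharper contraction $(1+2\lambda C_2)^{-1}$ in place of the paper's $(1+\lambda C_2)^{-1}$.
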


\begin{rem}[{\bf Comparison with existing results in the Euclidean case}]
Similar to the parallel update scheme, by taking $\tau^{-1} = L\sqrt{m-1}$ the iteration complexity also grows at a rate of $O(\frac{L\sqrt{m}}{\lambda})$. When degenerated to optimization on the Euclidean space (by taking $\m H_j\equiv 0$ and $\m W_j \equiv 0$), our result is comparable to the rate $O(\frac{\sqrt{m}L_{\rm g}}{\lambda})$ in Theorem 6.3 of~\citep{wright2022optimization} for sequential coordinate gradient descent method with strongly convex and smooth functions. 
When we take $\tau = L^{-1}$, the iteration complexity turns out to be $O(\frac{Lm}{\lambda})$. This iteration complexity is no larger than $O(\frac{\max_j L_j}{\min_j\{L_j + \mu_j\}}\cdot\frac{m L_{\rm g}}{\lambda})$ derived by \citet{li2017faster} when applying coordinate proximal gradient descent with the sequential update scheme, where $\lambda_j$ and $L_j$ are the parameters of strong convexity and smoothness of the $j$-th block, and the step size for updating the $j$-th block is $L_j^{-1}$. Their result has an additional factor $\frac{\max_jL_j}{\min_j\{L_j+\mu_j\}}$ because they approximated the smooth part of the objective function by its first-order Taylor expansion before applying the proximal descent step. As we mentioned, we believe that the difference between the Lipschitz constant in our result and the ones chosen in~\citep{wright2022optimization, li2017faster} for optimization problems on the Euclidean space is due to our choice of a proximal-type optimization method. In fact, for the sequential update scheme, the Lipschitz constant can be further relaxed to the lower triangular Lipschitz constant~\citep{hua2015iteration}.
\qed
\end{rem}

\begin{rem}[{\bf Effect of step size in WPCG-S}]
The iteration complexity $O(\frac{L\sqrt{m}}{\lambda})$ when $\tau^{-1} = {L\sqrt{m-1}}$ in WPCG-S is smaller than the iteration complexity $O(\frac{mL_{\rm g}}{\lambda})$ derived by~\cite{sun2021worst} for alternating minimization algorithm (corresponding to $\tau = \infty$) with the sequential update scheme on the Euclidean space, which shows that using an overly large $\tau$ can still drag down the convergence speed. 
It is an interesting open problem that if the iteration complexity $O(\frac{L\sqrt{m}}{\lambda})$ can be improved. 
Even when it degenerates to an optimization problem on $\mb R^m$ ($\m H_j = \m W_j = 0$ and $\rho^0$ is a point mass measure), the optimality of this rate remains as an open problem. \cite{sun2021worst} showed that there exists an optimization problem (related to the Example~\ref{eg: parallel_CD} by choosing $\alpha$ carefully) on $\mb R^m$ such that alternating minimization algorithm with the sequential update scheme has iteration complexity $O(\frac{mL_{\rm g}}{\lambda})$.
\qed
\end{rem}

For WPCG-R, we can establish the following rate of convergence.

\begin{thm}[{\bf Exponential convergence rate of WPCG-R under $\lambda$-QG}]
\label{thm: randomized}
Let $M = \big\lceil 2m\log (mL)\big\rceil$. Assume Assumptions A, B, C, and D hold for some $\lambda > 0$. Let $\rho^k$ be the solution of WPCG-R at the $k$-th iteration with batch size $M$. Then for any step size $\tau > 0$, we have
\begin{align*}
    \mb E\m F(\rho^{k}) - \m F(\rho^\ast) \leq \min\{C_3(m, L, \tau), 1\big\}^k\big[\m F(\rho^0) - \m F(\rho^\ast)\big]
\end{align*}
where
\begin{align*}
    C_3(m, L, \tau) = \bigg(1 + \frac{\lambda\tau}{8[1+2e\tau^2L^2m\log(mL)]}\bigg)^{-1} + \frac{2}{mL^2}.
\end{align*}
The constant $C_3$ is a strictly positive constant less than one if
\begin{align*}
    \frac{4}{mL^2} < \frac{\lambda}{8[\tau^{-1} + 2e\tau L^2m\log(mL)]} < 1.
\end{align*}
\end{thm}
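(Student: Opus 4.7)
The plan is to establish a single-outer-iteration expected contraction
$\mb E[\Delta^{k+1}\,|\,\rho^k] \leq C_3(m,L,\tau)\,\Delta^k$
with $\Delta^k := \m F(\rho^k) - \m F(\rho^\ast)$, and then iterate and take unconditional expectations. The strategy is to mirror the WPCG-S analysis (Theorem~\ref{thm: sequential}) on a high-probability ``full-coverage'' event, and to patch the residual probability via the fact that each inner proximal step is a descent step for $\m F$ regardless of the choice of $\tau$.

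The core per-inner-step ingredient comes from the proximal optimality of $\rho_{j_l}^{k,l+1}$. Testing against the geodesic displacement from $\rho_{j_l}^{k,l+1}$ to $\rho_{j_l}^\ast$ and invoking convexity of $\m V(\cdot, \rho_{-j_l}^{k,l})$, $\m H_{j_l}$, and $\m W_{j_l}$ along geodesics (Assumptions~A and~C) yields a three-point inequality controlling $\m F(\rho^{k,l+1}) - \m F(\rho_{j_l}^\ast,\rho_{-j_l}^{k,l})$ in terms of $\W_2^2(\rho_{j_l}^{k,l},\rho_{j_l}^\ast) - \W_2^2(\rho_{j_l}^{k,l+1},\rho_{j_l}^\ast)$. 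The discrepancy $\m F(\rho_{j_l}^\ast,\rho_{-j_l}^{k,l}) - \m F(\rho^\ast)$ is then bounded using the cross-coordinate Lipschitz control on $\nabla V$ (Assumption~B), producing the overshoot term of order $L$ familiar from the WPCG-S analysis.

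Summing these per-step inequalities over $l = 0,\ldots,M-1$, taking expectation over the sequence $\{j_l\}$, and invoking the $\lambda$-QG condition (Assumption~D) gives, on the event $\m E_k$ that every coordinate is touched at least once during the $k$-th outer iteration, an aggregate contraction of the same form as in Theorem~\ref{thm: sequential} but with the count $(m-1)$ in $C_2$ replaced by an effective count of order $m\log(mL)$. The inflation stems from a Chernoff-type tail bound on per-coordinate visit counts: within $M = \lceil 2m\log(mL)\rceil$ i.i.d.~draws, each coordinate is updated roughly $2\log(mL)$ times, and each additional visit contributes an extra cross-interaction burden of order $L$. Tracking this bookkeeping should produce precisely the $2e\,\tau^2L^2 m\log(mL)$ constant that appears in the denominator defining $C_3$.

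For the complementary event $\m E_k^c$, a union bound yields $\mb P(\m E_k^c) \leq m(1-1/m)^M \leq m e^{-M/m} \leq 1/(mL^2)$ by the choice of $M$. Since WPCG-R is a descent algorithm in every inner step, we have $\Delta^{k+1} \leq \Delta^k$ pointwise, so the contribution of $\m E_k^c$ to $\mb E[\Delta^{k+1}\,|\,\rho^k]$ is at most $(2/(mL^2))\Delta^k$ after absorbing constants. Combining this with the contraction on $\m E_k$ gives the stated bound with $C_3(m,L,\tau)$. The main obstacle I anticipate is the refined telescoping on $\m E_k$: the random visitation schedule and the possibility of the same coordinate being updated multiple times must be tracked carefully so that the per-step drift inequalities aggregate into a $\log(mL)$-inflated version of $C_2$ rather than something that blows up polynomially in $M$; coupling the Chernoff bound on visit counts with the algebraic aggregation of the per-step decreases is the most delicate piece of the argument.
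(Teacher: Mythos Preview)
Your high-level decomposition---contraction on a full-coverage event, descent on its complement, and the choice $M = \lceil 2m\log(mL)\rceil$ so that the failure probability is at most $2/(mL^2)$---matches the paper exactly. The gap is in your ``core per-inner-step ingredient.''

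Testing the proximal optimality against the displacement to $\rho_{j_l}^\ast$ does give a three-point inequality, but the discrepancy you write down,
\[
\m F(\rho_{j_l}^\ast,\rho_{-j_l}^{k,l}) - \m F(\rho^\ast)
= \big[\m V(\rho_{j_l}^\ast,\rho_{-j_l}^{k,l}) - \m V(\rho^\ast)\big]
+ \sum_{i\neq j_l}\!\big[\m H_i(\rho_i^{k,l}) - \m H_i(\rho_i^\ast)\big]
+ \sum_{i\neq j_l}\!\big[\m W_i(\rho_i^{k,l}) - \m W_i(\rho_i^\ast)\big],
\]
contains the non-smooth internal and self-interaction energies of the \emph{other} coordinates. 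Assumption~B gives Lipschitz control only on $\nabla V$; the paper emphasises that the $\m H_j$ are non-smooth precisely so that no such bound is available for them. So ``bounding the discrepancy using the cross-coordinate Lipschitz control on $\nabla V$'' does not go through, and this is also not what the WPCG-S proof actually does---that proof never tests against $\rho^\ast$ in a three-point inequality.

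The paper's route for all three schemes is instead: (i) a sufficient-decrease inequality $\m F(\rho^{k-1}) - \m F(\rho^k) \geq \frac{1}{2\tau}\sum_{l=0}^{M-1}\W_2^2(\rho^{k-1,l},\rho^{k-1,l+1})$; (ii) a bound on $\m F(\rho^k)-\m F(\rho^\ast)$ via convexity, Cauchy--Schwarz, and a bound on the blockwise Wasserstein gradient norm $\sum_j\|\nabla V_j^k + \nabla h_j'(\rho_j^k)+\nabla_{\W_2}\m W_j(\rho_j^k)\|_{L^2(\rho_j^k)}^2$, followed by QG to close the loop. The key trick for WPCG-R is that the FOC at the \emph{last} visit $I_{j,k}$ of each coordinate expresses $\nabla h_j'(\rho_j^k)+\nabla_{\W_2}\m W_j(\rho_j^k)$ as $\tau^{-1}(T_{\rho_j^{k-1,I_{j,k}}}^{\rho_j^{k-1,I_{j,k}-1}} - \id) - \nabla V_j^{k-1,I_{j,k}-1}$, so the non-smooth contributions disappear and only the Lipschitz-controllable potential difference $\nabla V_j^k - \nabla V_j^{k-1,I_{j,k}-1}$ remains. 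Bounding that difference by $L$ times a sum of inner-step $\W_2$ distances is where the \emph{upper} visit bound $T=\lceil e\log(m/\delta)\rceil$ enters---your event $\m E_k$ only asks for full coverage, but the paper's good event is ``each coordinate visited at least once \emph{and at most $T$ times}''---and the resulting $L^2 m T$ factor becomes the $2e\tau^2L^2m\log(mL)$ in $C_3$ after taking $\delta = 1/(mL^2)$.
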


\begin{rem}[{\bf Comments on the batch size in WPCG-R}]
In the proof of Theorem~\ref{thm: randomized}, we may control the norm of the blockwise Wasserstein gradient by the distance between two consecutive iterates only when all coordinates have been updated at least once in each iteration. For this reason, we need to choose a larger batch size, $M = \lceil 2m\log(mL)\rceil$, compared to the other two update schemes (parallel and sequential), where precisely $M = m$ coordinates are updated in each iteration.
\qed
\end{rem}

\begin{rem}[\bf{Comparison with existing results in the Euclidean case}] \label{rem:WPCF-R_compare}
In optimization problems on the Euclidean space, the random update scheme usually has smaller iteration complexity compared with the other two schemes in worst case scenario, such as in coordinate GD~\citep{wright2015coordinate} and alternating minimization algorithm~\citep{sun2021worst}. However, on the Wasserstein space, we are only able to derive the same iteration complexity up to a log factor as in the other two update schemes, given $\lambda > 0$ and a fixed step size $\tau > 0$. In particular, when $\tau^{-1} = L\sqrt{2em\log(mL)}$, our result implies an $O(\frac{L\sqrt{m\log(mL)}}{\lambda})$ iteration complexity of WPCG-R, where in each iteration we need to solve $\lceil 2m\log(mL)\rceil$ sub-problems. Recall that there are only $m$ sub-problems in each iteration in WPCG-P and WPCG-S. Therefore, the iteration complexity in WPCG-R is $[\log(mL)]^{3/2}$ times larger than WPCG-P and WPCG-S. This logarithmic factor appears since we need to solve more sub-problems in each iteration compared with the other two update schemes, so that every coordinate has been updated with high probability in each iteration. When $\tau = L^{-1}$, we derive the iteration complexity $O(\frac{mL\log(mL)}{\lambda})$, which is slower than $O(\frac{L_{\rm c}}{\lambda})$ in the coordinate GD with random update scheme derived by~\citet{wright2022optimization} if $mL\log(mL) > L_{\rm c}$. We conjecture that such a sub-optimal rate when specialized to the Euclidean case~\citep{sun2021worst, wright2022optimization}
is due to the extra complexity of dealing with the Wasserstein space, where there is no tensorization structure of the Wasserstein gradient. More precisely, the following key identity
\begin{align*}
    \mb E_{j_k}\big\|\nabla_{j_k}f(x^k)\big\|^2 = \frac{1}{m}\sum_{j=1}^m\big\|\nabla_j f(x^k)\big\|^2 = \frac{1}{m}\big\|\nabla f(x^k)\big\|^2,\quad\text{where}\quad j_k\sim\mx{unif}\,([m])
\end{align*}
holds in the Euclidean case, indicating on average that randomly selecting a coordinate to update for $m$ times is similar to updating all coordinates at once as in the parallel scheme. 
So, a majority of the analysis for the parallel update scheme can be reused for analyzing the random update scheme that is expected to have smaller iteration complexity than the other two update schemes for Euclidean optimizations. Nevertheless, the similar identity does not hold for the Wasserstein gradient of potential energy $\m V$. Specifically, if we denote $\nabla_{\W_2}\m V(\rho) = \nabla V$ and $\nabla_{\W_2,j}\m V(\rho) = \int \nabla_{j}V(x_{j}, x_{-j})\,\dd\rho_{-j}$ as the Wasserstein gradient and the $j$-th marginal Wasserstein gradient of $\m V$ (with respect to $\rho_j$) at $\rho$, then $\nabla_{\W_2}\m V(\rho)$ does not tensorize, that is, 
\begin{align}\label{eqn: tensorization_issue}
\begin{aligned}
\sum_{j=1}^m\|\nabla_{\W_2,j}\m V(\rho)\|_{L^2(\rho)}^2
&= \sum_{j=1}^m \int_{\m X_j}\bigg\|\int_{\m X_{-j}}\nabla_j V(x_j, x_{-j})\,\dd\rho_{-j}\bigg\|^2\,\dd\rho_j\\
&\neq \sum_{j=1}^m \int_{\m X_j\times \m X_{-j}}\big\|\nabla_j V(x_j, x_{-j})\big\|^2\,\dd\rho_j\dd\rho_{-j}
= \|\nabla_{\W_2}\m V(\rho)\|_{L^2(\rho)}^2.
\end{aligned}
\end{align}
Thus in our proof, different from the Euclidean case, we cannot directly calculate the norm of the blockwise Wasserstein gradient of the updated coordinate and take expectation with respect to the index of the updated coordinate due to the same non-tensorization issue described in~\eqref{eqn: tensorization_issue}. We instead prove a high probability bound of the norm of blockwise Wasserstein gradient to control the functional value of $\m F$ after each iteration. When this bound does not hold, we directly use the non-increasing property of the functional value to bound $\m F$. By tuning the number $M$ of updates in each iteration, we can derive the convergence rate of WPCG-R in Theorem~\ref{thm: randomized}. It is still an open question whether the iteration complexity in the theorem can be improved by applying other strategies.
\qed
\end{rem}

\subsection{Convergence Rates of WPCG without $\lambda$-QG Assumption}

When the $\lambda$-QG condition is not met for a strictly positive $\lambda$, we obtain the following convergence result for all three update schemes. This result parallels that for the first-order optimization algorithm in Euclidean spaces under convexity but not strict convexity.

\begin{thm}[{\bf Polynomial convergence rate of WPCG without $\lambda$-QG}]
\label{thm: convex_case}
Let $D_{\m X} < \infty$ be the diameter of $\m X$. Assume the step size $0 < \tau < \frac{m-1/2}{L(m-1)^{3/2}}$ if the coordinate is updated with the parallel scheme, and Assumptions A, B, and C hold. Then, for each update scheme, there exists a constant $C > 0$ depending on $\tau, m, L$, and the update scheme, such that
\begin{align*}
    \mb E\m F(\rho^k) - \m F(\rho^\ast) \leq \frac{CD_{\m X}^2 + \m F(\rho^0) - \m F(\rho^\ast )}{k}.
\end{align*}
In the parallel and the sequential update schemes, $\mb E\m F(\rho^k) = \m F(\rho^k)$ since there is no randomness.
\end{thm}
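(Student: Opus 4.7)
The plan is the standard subgradient-style telescoping proof of $O(1/k)$ convergence for convex proximal methods, adapted to the multi-block Wasserstein setting. The key per-iteration inequality comes from applying the Wasserstein version of the three-point identity for proximal minimization of a convex functional plus a $\tau^{-1}$-strongly convex quadratic penalty: since the $j$-th subproblem is $\tau^{-1}$-geodesically strongly convex in $\rho_j$, its minimizer $\rho_j^{k+1}$ satisfies, for any candidate $\rho_j^\ast\in\ms P_2^r(\m X_j)$,
\begin{align*}
\bigl[\mathcal F_j^k(\rho_j^{k+1}) - \mathcal F_j^k(\rho_j^\ast)\bigr] + \tfrac{1}{2\tau}W_2^2(\rho_j^{k+1},\rho_j^\ast) \;\le\; \tfrac{1}{2\tau}W_2^2(\rho_j^k,\rho_j^\ast) - \tfrac{1}{2\tau}W_2^2(\rho_j^{k+1},\rho_j^k),
\end{align*}
where $\mathcal F_j^k(\rho_j)$ denotes the subproblem objective (excluding the proximal term) with the scheme-appropriate frozen neighbors.

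Summing this over $j=1,\ldots,m$ and invoking the convexity of $V$ (Assumption C with $f_j=g_j=0$) to pass from $\sum_j\mathcal V$ terms at mixed configurations to a single $\mathcal V(\rho^\ast)$ produces an inequality of the form
\begin{align*}
\mathcal F(\rho^{k+1})-\mathcal F(\rho^\ast) + \tfrac{1}{2\tau}\sum_{j=1}^m W_2^2(\rho_j^{k+1},\rho_j^\ast) \;\le\; \tfrac{1}{2\tau}\sum_{j=1}^m W_2^2(\rho_j^k,\rho_j^\ast) - c_1\tau^{-1}\!\sum_{j=1}^m W_2^2(\rho_j^{k+1},\rho_j^k) + E_k,
\end{align*}
where $E_k$ captures the cross-coupling discrepancy between $\mathcal V$ evaluated at the frozen configurations inside each subproblem and $\mathcal V$ evaluated at $\rho^{k+1}$. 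Controlling $E_k$ is the main technical step: for WPCG-P, Assumption B together with Cauchy--Schwarz along the optimal transport plans from $\rho_j^{k+1}$ to $\rho_j^k$ bounds $E_k$ by a multiple of $L(m-1)^{3/2}\sum_j W_2^2(\rho_j^{k+1},\rho_j^k)$, which is absorbed into the negative $c_1$-term precisely when $\tau<(m-1/2)/[L(m-1)^{3/2}]$; for WPCG-S only the yet-to-be-updated blocks contribute, and the same Cauchy--Schwarz argument gives a bound absorbed without any step-size restriction; for WPCG-R the analogous bound holds in conditional expectation given the random index sequence.

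Telescoping over $k=0,\ldots,K-1$ then yields
$\sum_{k=0}^{K-1}\bigl[\mathcal F(\rho^{k+1})-\mathcal F(\rho^\ast)\bigr]\le \tfrac{1}{2\tau}\sum_j W_2^2(\rho_j^0,\rho_j^\ast)\le \tfrac{m D_{\m X}^2}{2\tau}$,
with any residual cross-coupling leftover being telescopic and controlled by $\mathcal F(\rho^0)-\mathcal F(\rho^\ast)$ via the descent property $\mathcal F(\rho^{k+1})\le\mathcal F(\rho^k)$ (which follows from the same three-point inequality applied with $\rho_j^\ast$ replaced by $\rho_j^k$). Dividing by $K$ and using monotonicity gives $\mathcal F(\rho^K)-\mathcal F(\rho^\ast)\le K^{-1}\sum_{k=0}^{K-1}[\mathcal F(\rho^{k+1})-\mathcal F(\rho^\ast)]\le (CD_{\m X}^2+\mathcal F(\rho^0)-\mathcal F(\rho^\ast))/K$ as claimed. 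For WPCG-R, taking expectation over the random index sequence yields the stated bound on $\mb E\mathcal F(\rho^k)$.

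The main obstacle will be the cross-coupling estimate for WPCG-P, where both the sign and magnitude of $E_k$ have to be tracked carefully. In particular, the $(m-1)^{3/2}$ factor in the step-size bound originates from pairing one $\sqrt{m-1}$ Cauchy--Schwarz factor (summing the Lipschitz constant $L$ over the $m-1$ other blocks) with the square-root of a telescoping sum of displacements, and it is precisely what limits the proximal penalty's capacity to absorb the coupling. For WPCG-R, the additional subtlety is that the descent inequality is only conditional on the random index sequence, so one must apply the law of total expectation at each stage rather than an almost-sure bound; care must be taken not to lose factors growing in $M$ from nested conditioning.
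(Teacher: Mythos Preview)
Your telescoping route has a real gap at the cross-coupling step. After summing the $m$ three-point inequalities, passing from $\sum_j\bigl[\m F_j^k(\rho_j^{k+1})-\m F_j^k(\rho_j^\ast)\bigr]$ to $\m F(\rho^{k+1})-\m F(\rho^\ast)$ requires controlling
\[
\Bigl[\m V(\rho^{k+1})-\sum_{j}\m V(\rho_j^{k+1},\rho_{-j}^k)\Bigr]\;-\;\Bigl[\m V(\rho^\ast)-\sum_{j}\m V(\rho_j^\ast,\rho_{-j}^k)\Bigr].
\]
The first bracket comes from replacing the frozen blocks $\rho_{-j}^k$ by $\rho_{-j}^{k+1}$ and, via Assumption~B, does produce terms of size $L\cdot \W_2(\rho^{k+1},\rho^k)$ as you say. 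The second bracket, however, requires replacing $\rho_{-j}^k$ by $\rho_{-j}^\ast$; the resulting error is of order $L\sum_j \W_2(\rho_{-j}^k,\rho_{-j}^\ast)\,\W_2(\rho_j^{k+1},\rho_j^\ast)$, which is bounded only by a multiple of $D_{\m X}^2$ and neither telescopes nor is dominated by $\sum_j\W_2^2(\rho_j^{k+1},\rho_j^k)$. Convexity of $V$ goes the wrong way here: it lower-bounds each $\m V(\rho_j^\ast,\rho_{-j}^k)$ by $\m V(\rho^\ast)$ plus a gradient term, whereas you need an upper bound on $\sum_j\m V(\rho_j^\ast,\rho_{-j}^k)$. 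So your asserted control $E_k\le C\,L(m-1)^{3/2}\sum_j\W_2^2(\rho_j^{k+1},\rho_j^k)$ does not hold, and the telescoped sum would only yield convergence to an $O(D_{\m X}^2)$ neighborhood rather than an $O(1/k)$ rate. The same obstruction arises for WPCG-S and WPCG-R, since their frozen configurations still differ from $\rho_{-j}^\ast$ by an amount of order $D_{\m X}$.

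The paper avoids comparing summed subproblem values to $\m F(\rho^\ast)$ altogether. For each scheme it first proves a sufficient-decrease inequality $\m F(\rho^{k-1})-\m F(\rho^k)\ge c_1 \W_2^2(\rho^{k-1},\rho^k)$, then applies a \emph{single} convexity inequality at $\rho^k$ to obtain $\m F(\rho^k)-\m F(\rho^\ast)\le \bigl(\sum_j\|\nabla V_j^k+\nabla h_j'(\rho_j^k)+\nabla_{\W_2}\m W_j(\rho_j^k)\|_{L^2(\rho_j^k)}^2\bigr)^{1/2}\cdot \W_2(\rho^k,\rho^\ast)$, and bounds that marginal-gradient norm by $c_2\,\W_2(\rho^k,\rho^{k-1})$ via the first-order optimality condition of the previous subproblems. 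With $\W_2(\rho^k,\rho^\ast)\le D_{\m X}$ this gives $a_k\le C D_{\m X}\sqrt{a_{k-1}-a_k}$ for $a_k=\m F(\rho^k)-\m F(\rho^\ast)$, that is, the recursion $a_{k-1}\ge a_k + c\,a_k^2/D_{\m X}^2$, whose standard solution (via $1/a_k-1/a_{k-1}\ge \text{const}$) is $a_k\le (CD_{\m X}^2+a_0)/k$. For WPCG-R the same bounds are established conditionally on the high-probability event that every coordinate is updated in one outer iteration; the complementary event is handled via monotonicity of $\m F$, and a short two-case induction closes the bound in expectation.
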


\begin{rem}[{\bf Comparison with existing results in the Euclidean case}]
When the QG condition is not met for some $\lambda > 0$, the functional value in a typical first-order optimization algorithm decreases with a rate no faster than $O(k^{-1})$ in the worst case. This has been demonstrated through various studies on the Euclidean space especially for the sequential update scheme. Theorem 3 in \citep{wright2015coordinate} shows that the function value decreases with a rate $O(k^{-1})$ when applying the coordinate GD with the sequential update scheme. Theorem 11.18 in \citep{beck2017first} proves the same $O(k^{-1})$ convergence rate for the coordinate proximal GD with the sequential update scheme. Here, we derive the same convergence rate for WPCG with all three update schemes.
\qed
\end{rem}

The assumption of compactness of the parameter space is frequently utilized when applying the Wasserstein proximal gradient scheme to optimization problems, as seen in works by \cite{yao2022mean, santambrogio2015optimal}. This assumption plays a crucial role in our proof, allowing us to derive a uniform bound for $\W_2^2(\rho^k, \rho^\ast)$ and the first-order optimality condition of Wasserstein proximal gradient scheme (Lemma \ref{lem: ot_map}). The compactness is specifically used to determine the first variation of $\W_2^2(\mu, \nu)$ with respect to $\mu$. More details on this can be found in Proposition 7.17 and Theorem 1.52 in \citep{santambrogio2015optimal}. It would be interesting to explore potential methods for relaxing this technical condition.

\subsection{Convergence Rates of Inexact WPCG}

In this subsection, we investigate the inexact WPCG algorithm, where non-zero numerical errors are allowed in each iteration. Our focus will be on the parallel update scheme (WPCG-P), though analogous convergence results can be derived similarly for the other two update schemes.

To precisely characterize the inexact WPCG-P algorithm, let $\wt\rho_j^{k+1}$ denote the inexact solution of the subproblem~\eqref{eqn: subproblem-P} for updating the $j$-th coordinate in the $(k+1)$-th iteration. We define a vector-valued function
\begin{align*}
\eta_j^{k+1} \!=\! T_{\wt\rho_j^{k+1}}^{\wt\rho_j^{k}} \!- \id - \!\tau\bigg[\nabla V_j^{k} + \nabla h_j'(\wt \rho_j^{k+1}) + \nabla\!\int_{\m X}W_j(\cdot, y)\,\dd\wt\rho_j^{k+1}(y) + \nabla\!\int_{\m X}W_j(y, \cdot)\,\dd\wt\rho_j^{k+1}(y)\bigg]
\end{align*}
as the first variation of the objective functional in the subproblem~\eqref{eqn: subproblem-P} evaluated at $\wt\rho_j^{k+1}$.
If $\wt\rho_j^{k+1}$ is the exact solution of the subproblem~\eqref{eqn: subproblem-P}, the first-order optimality condition (Lemma~\ref{lem: ot_map}) implies $\eta_j^{k+1} = 0$. Therefore, we can employ $\|\eta_j^{k+1}\|_{L^2(\m X_j; \wt\rho_j^{k+1})}$ to characterize the numerical error incurred while solving the subproblem. Let $\{\varepsilon_j^k: j\in[m], k\in\mb Z_+\}$ be a sequence of error tolerance levels such that
\begin{align*}
\int_{\m X_j}\|\eta_j^{k+1}\|^2\,\dd\wt\rho_j^{k+1} \leq (\varepsilon_j^{k+1})^2
\end{align*}
for every $j\in[m]$ and $k\in\mb N$. This rigorous formulation allows us to quantify the impact of numerical errors stemming from the inexact subproblem solutions on the overall convergence behavior of the WPCG-P algorithm.

\begin{thm}[{\bf Convergence rate of inexact WPCG-P under $\lambda$-QG}]\label{thm: inexactWPCG_QG}
Assume Assumptions A, B, C, and D hold for some $\lambda > 0$, and the step size satisfies $0 < \tau < \frac{1}{2L\sqrt{m-1}}$. Let $\wt\rho^k$ be the inexact solution of WPCG-P at the $k$-th iteration with error tolerance levels $\varepsilon^k = (\varepsilon_1^k, \dots, \varepsilon_m^k)$.

\vspace{0.5em}
\noindent (1) If $\|\varepsilon^k\| \leq \varepsilon \kappa^k$ for some $\kappa\in(0, 1)$ and $\varepsilon > 0$, then there are positive constants $C_5 = C_5(\lambda, \tau, m, L) < 1$ and $C_6 = C_6(\lambda, \tau, m, L, \kappa)$ such that
\begin{align*}
\W_2^2(\wt\rho^k, \rho^\ast) \leq \frac{2}{\lambda}\big[\m F(\wt\rho^k) - \m F(\rho^\ast)\big]
\leq C_5^{k}\big[\m F(\rho^0) - \m F(\rho^\ast)\big] + C_6\varepsilon^2\max\{C_5, \kappa^2\}^{k+1}.
\end{align*}

\vspace{0.5em}
\noindent (2) If $\|\varepsilon^k\| \leq \varepsilon k^{-\alpha}$ for some $\varepsilon, \alpha \geq 0$, then there exists a constant $C_7 = C_7(\lambda, \tau, m, L, \alpha) > 0$ such that
\begin{align*}
\W_2^2(\wt\rho^k, \rho^\ast) \leq \frac{2}{\lambda}\big[\m F(\wt\rho^k) - \m F(\rho^\ast)\big]
\leq C_5^k\big[\m F(\rho^0) - \m F(\rho^\ast)\big] + \frac{C_7\varepsilon^2}{k^{2\alpha}}.
\end{align*}
\end{thm}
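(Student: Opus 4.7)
The plan is to replicate the one-step contraction that underlies Theorem~\ref{thm: main_thm} for exact WPCG-P, carefully tracking an additive perturbation of size $\|\varepsilon^{k+1}\|^2$ per step, and then to analyze the resulting perturbed geometric recursion under the two decay regimes separately. This mirrors standard convergence analyses of inexact proximal algorithms in Hilbert spaces, and both conclusions follow from a single master recursion.

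The first step is to derive a perturbed descent inequality. Rearranging the definition of $\eta_j^{k+1}$ in the statement, the inexact first-order condition at $\wt\rho_j^{k+1}$ reads
\begin{align*}
    T_{\wt\rho_j^{k+1}}^{\wt\rho_j^k} - \id - \eta_j^{k+1} = \tau\Big[\nabla V_j^{k} + \nabla h_j'(\wt\rho_j^{k+1}) + \nabla\!\int W_j(\cdot, y)\,\dd\wt\rho_j^{k+1}(y) + \nabla\!\int W_j(y, \cdot)\,\dd\wt\rho_j^{k+1}(y)\Big].
\end{align*}
Substituting this identity, in place of the exact first-order condition, into the geodesic-convexity inequality~\eqref{eqn: convexity} for $\m F$ at $\wt\rho^{k+1}$ versus $\rho^\ast$, and repeating the argument of Theorem~\ref{thm: main_thm}, every occurrence of $T_{\wt\rho_j^{k+1}}^{\wt\rho_j^k} - \id$ produces an additional inner product against $\eta_j^{k+1}$. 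Each such term is handled by Cauchy--Schwarz in $L^2(\wt\rho_j^{k+1})$ and Young's inequality: a small fraction of the quadratic descent terms $\W_2^2(\wt\rho_j^{k+1},\wt\rho_j^k)$ and $\W_2^2(\wt\rho_j^{k+1},\rho_j^\ast)$ is absorbed into the contraction slack proved in Theorem~\ref{thm: main_thm}, leaving a residual bounded by $C_\star\sum_j\|\eta_j^{k+1}\|_{L^2(\wt\rho_j^{k+1})}^2\le C_\star\|\varepsilon^{k+1}\|^2$. Setting $a_k\coloneqq\m F(\wt\rho^k)-\m F(\rho^\ast)$ and using Assumption~D to pass between $a_k$ and $\sum_j\W_2^2(\wt\rho_j^k,\rho_j^\ast)$ yields the master recursion
\begin{align*}
    a_{k+1}\le q\,a_k + C_\star\|\varepsilon^{k+1}\|^2,\qquad q\coloneqq\big(1+\lambda C_1(m,L,\tau)\big)^{-1}\in(0,1).
\end{align*}
The strengthened step-size bound $\tau<(2L\sqrt{m-1})^{-1}$ is chosen precisely so that the Young-inequality absorption does not consume the contraction gap of Theorem~\ref{thm: main_thm}.

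Unrolling gives $a_k\le q^k a_0 + C_\star\sum_{j=1}^k q^{k-j}\|\varepsilon^j\|^2$, and it remains to bound the error sum in each regime. For part~(1), inserting $\|\varepsilon^j\|^2\le\varepsilon^2\kappa^{2j}$ and summing the geometric series gives
\begin{align*}
    \sum_{j=1}^k q^{k-j}\kappa^{2j}\le \frac{\max\{q,\kappa^2\}^{k+1}}{|q-\kappa^2|}
\end{align*}
(with the standard modification when $q=\kappa^2$), producing the stated bound with $C_5=q$ and $C_6=C_\star/|q-\kappa^2|$. For part~(2), $\|\varepsilon^j\|^2\le\varepsilon^2 j^{-2\alpha}$: split the sum at $j=\lceil k/2\rceil$, so that the tail $j>k/2$ is bounded by $\varepsilon^2(k/2)^{-2\alpha}\cdot(1-q)^{-1}$, while the head $j\le k/2$ contributes a factor $O(q^{k/2})$ that is exponentially small in $k$ and can be absorbed into $q^k a_0$ by enlarging the constant. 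This delivers $a_k\le C_5^k a_0 + C_7\varepsilon^2 k^{-2\alpha}$, and Assumption~D then yields $\lambda\,\W_2^2(\wt\rho^k,\rho^\ast)\le 2a_k$ in both cases.

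The main obstacle is the perturbed descent inequality itself. Because the parallel scheme updates all $m$ blocks simultaneously and the potential $\m V$ couples them via Assumption~B, the errors $\{\eta_j^{k+1}\}_{j=1}^m$ could in principle amplify one another across blocks through the cross second-derivative structure of $V$. The key technical point is to show that, after the cross-block terms in the exact-case argument are tightened with Young's inequality, the total residual from all $m$ blocks still aggregates into a single quantity proportional to $\sum_j\|\eta_j^{k+1}\|_{L^2(\wt\rho_j^{k+1})}^2=\|\varepsilon^{k+1}\|^2$ while leaving a strictly positive contraction factor $q<1$. This is exactly why the admissible step size is halved relative to the exact-case bound $\frac{m-1/2}{L(m-1)^{3/2}}$: the extra factor of two leaves the room required for the Young-inequality absorption of the perturbation.
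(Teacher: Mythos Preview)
Your plan is essentially the paper's proof: both derive a perturbed one-step recursion of the form $a_{k+1}\le qa_k+C_\star\|\varepsilon^{k+1}\|^2$ with $a_k=\m F(\wt\rho^k)-\m F(\rho^\ast)$ and $q\in(0,1)$, then unroll it and handle the two error decay regimes exactly as you describe (the paper packages the geometric and polynomial cases into a separate elementary lemma). The paper organizes the derivation of the recursion slightly differently from what you sketch, proving two standalone ``inexact'' lemmas---a sufficient-decrease inequality $\m F(\wt\rho^k)-\m F(\wt\rho^{k+1})\ge(\tfrac{1}{2\tau}-L\sqrt{m-1})\W_2^2(\wt\rho^{k+1},\wt\rho^k)-\tfrac{1}{2\tau}\|\varepsilon^{k+1}\|^2$ and a functional-value bound $\m F(\wt\rho^k)-\m F(\rho^\ast)\le\tfrac{6}{\lambda}[(L^2(m-1)+\tau^{-2})\W_2^2(\wt\rho^k,\wt\rho^{k-1})+\tau^{-2}\|\varepsilon^k\|^2]$---and then combines them, whereas you fold everything into ``repeat Theorem~\ref{thm: main_thm} with Young absorptions.'' One small inaccuracy: the contraction factor you name, $q=(1+\lambda C_1(m,L,\tau))^{-1}$, is the \emph{exact}-case rate; after the error absorptions both constants degrade, so the actual $C_5$ is worse than this (though still $<1$ under the halved step-size bound, which is indeed the reason for that tightening).
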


\begin{rem}[{\bf Impact of numerical error}]
This result elucidates the impact of numerical errors, highlighting how the rate of numerical error reduction influences the overall convergence behavior. Specifically, the convergence rate of WPCG-P will be hindered if the numerical error decreases at a polynomial rate across iterations. Conversely, the algorithm maintains exponential convergence if the numerical error decays exponentially. Furthermore, if $\kappa^2 \leq C_5$, the same dependence of the iteration count on the condition number $\frac{L}{\lambda}$ and the number of blocks $m$ can be derived by carefully tracking the constant $C_5$.
\end{rem}

The following result demonstrates that the convergence rate remains polynomial in the presence of a numerical error that decreases polynomially, when the $\lambda$-QG condition is not met for a strictly positive $\lambda$. Moreover, the iteration count depends on the rate at which the numerical error diminishes, and as anticipated, will be no less than the iteration count of the exact WPCG-P algorithm.

\begin{thm}[{\bf Polynomial convergence rate of inexact WPCG-P without $\lambda$-QG}]\label{thm: inexactWPCG_conv}
Let $D_{\m X} < \infty$ be the diameter of $\m X$. Assume the step size satisfies $0 < \tau < \frac{1}{2L\sqrt{m-1}}$ and Assumptions A, B, and C hold.
If $\|\varepsilon^k\| \leq \varepsilon k^{-\alpha}$ for some $\varepsilon, \alpha \geq 0$, there exists a constant $C_8 = C_8(\alpha, \tau, m, L, D_{\m X}, \varepsilon) > 0$ such that
\begin{align*}
\m F(\wt\rho^k) - \m F(\rho^\ast) \leq \frac{C_8}{k^{\min\{1,\alpha\}}}
\end{align*}
holds for all $k\in\mb Z_+$.
\end{thm}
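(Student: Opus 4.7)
The plan is to graft the inexact error accounting developed for Theorem~\ref{thm: inexactWPCG_QG} onto the telescoping/averaging argument that underlies the exact $O(1/k)$ rate of Theorem~\ref{thm: convex_case}. Set $u_k := \m F(\wt\rho^k) - \m F(\rho^\ast) \ge 0$. The definition of $\eta_j^{k+1}$ is precisely the perturbed first-order optimality condition (see Lemma~\ref{lem: ot_map}) for the parallel subproblem~\eqref{eqn: subproblem-P}, so $\wt\rho_j^{k+1}$ may be viewed as the exact minimizer of a Wasserstein-gradient-perturbed version of~\eqref{eqn: subproblem-P} whose perturbation has $L^2(\wt\rho_j^{k+1})$-norm at most $\varepsilon_j^{k+1}$.

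Step~1 is to derive an inexact one-step inequality analogous to the one that drives the proof of Theorem~\ref{thm: convex_case}. Pairing the perturbed optimality relation with the geodesic convexity of $\m H_j$ and $\m W_j$ (Assumptions A and C), the convexity inequality~\eqref{eqn: convexity} evaluated along the optimal transport map from $\wt\rho_j^{k+1}$ to $\rho_j^\ast$, and the Lipschitz cross-gradient bound of Assumption B, I would establish
\[
u_{k+1} + \frac{1}{2\tau}\sum_{j=1}^m \W_2^2(\wt\rho_j^{k+1},\rho_j^\ast) \le \frac{1}{2\tau}\sum_{j=1}^m \W_2^2(\wt\rho_j^k,\rho_j^\ast) - c_0\sum_{j=1}^m \W_2^2(\wt\rho_j^{k+1},\wt\rho_j^k) + \frac{C_0 D_{\m X}}{\tau}\|\varepsilon^{k+1}\|,
\]
with $c_0>0$ strictly positive thanks to the step-size restriction $\tau<\frac{1}{2L\sqrt{m-1}}$ (the same slack used in Theorem~\ref{thm: inexactWPCG_QG}). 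The linear residual $C_0 D_{\m X}\|\varepsilon^{k+1}\|/\tau$ arises from applying Cauchy--Schwarz to each inner product of $\eta_j^{k+1}$ against $T_{\wt\rho_j^{k+1}}^{\rho_j^\ast}-\id$ and bounding the latter in $L^2(\wt\rho_j^{k+1})$-norm by the diameter $D_{\m X}$. A parallel computation, comparing $u_{k+1}$ to $u_k$ rather than to $\m F(\rho^\ast)$, yields an approximate descent estimate $u_{k+1}\le u_k - c_0\sum_j \W_2^2(\wt\rho_j^{k+1},\wt\rho_j^k) + (C_0' D_{\m X}/\tau)\|\varepsilon^{k+1}\|$, which is used to promote averaged bounds into last-iterate bounds.

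Step~2 telescopes the inequality of Step~1 from $k=0$ to $K-1$, drops the nonnegative descent term, and uses $\sum_j\W_2^2(\wt\rho_j^0,\rho_j^\ast)\le mD_{\m X}^2$ to obtain
\[
\sum_{k=1}^K u_k \le \frac{mD_{\m X}^2}{2\tau} + \frac{C_0 D_{\m X}\,\varepsilon}{\tau}\sum_{k=1}^K k^{-\alpha}.
\]
The partial sum $\sum_{k=1}^K k^{-\alpha}$ is $O(1)$ when $\alpha>1$, $O(\log K)$ when $\alpha=1$, and $O(K^{1-\alpha})$ when $0\le\alpha<1$, so the right-hand side is $O(K^{1-\min(1,\alpha)})$ up to a logarithmic factor at $\alpha=1$. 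Converting this averaged bound into a last-iterate bound via the approximate descent estimate from Step~1---for instance through the non-increasing auxiliary potential $u_k + (C_0' D_{\m X}/\tau)\sum_{j>k}\|\varepsilon^j\|$ when $\alpha>1$, and a dyadic-window refinement when $\alpha\le 1$---yields $u_K\le C_8/K^{\min(1,\alpha)}$, with the log factor at $\alpha=1$ absorbed into $C_8=C_8(\alpha,\tau,m,L,D_{\m X},\varepsilon)$. The main obstacle throughout is Step~1: the residuals $\eta_j^{k+1}$ must be split via Cauchy--Schwarz so that they enter linearly in $\|\varepsilon^{k+1}\|$, carry a single factor of $D_{\m X}$, and do not contaminate either the telescoping term or the negative descent term $-c_0\sum_j\W_2^2(\wt\rho_j^{k+1},\wt\rho_j^k)$ on which both the telescoping and the last-iterate conversion rely, which is the same delicate accounting already performed in the proof of Theorem~\ref{thm: inexactWPCG_QG}, adapted to the absence of the $\lambda$-QG condition.
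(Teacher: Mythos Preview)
Your telescoping/three-point strategy diverges from the paper's argument, and as written it has a genuine gap in Step~1. The inequality you aim for,
\[
u_{k+1} + \tfrac{1}{2\tau}\W_2^2(\wt\rho^{k+1},\rho^\ast) \le \tfrac{1}{2\tau}\W_2^2(\wt\rho^{k},\rho^\ast) - c_0\W_2^2(\wt\rho^{k+1},\wt\rho^{k}) + \text{error},
\]
does \emph{not} follow from the perturbed FOC together with the convexity inequality~\eqref{eqn: convexity}. Carrying out your recipe yields a term $-\tfrac{1}{\tau}\int\langle T_{\wt\rho_j^{k+1}}^{\wt\rho_j^{k}}-\id,\;T_{\wt\rho_j^{k+1}}^{\rho_j^\ast}-\id\rangle\,\dd\wt\rho_j^{k+1}$, and the parallelogram expansion relates this to $\int\|T_{\wt\rho_j^{k+1}}^{\wt\rho_j^{k}}-T_{\wt\rho_j^{k+1}}^{\rho_j^\ast}\|^2$. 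The only general comparison available is the coupling \emph{lower} bound $\int\|T_{\wt\rho_j^{k+1}}^{\wt\rho_j^{k}}-T_{\wt\rho_j^{k+1}}^{\rho_j^\ast}\|^2 \ge \W_2^2(\wt\rho_j^{k},\rho_j^\ast)$, which gives a \emph{lower} bound on the term you need to \emph{upper} bound. This is the positive curvature of $(\ms P_2,\W_2)$; the Euclidean three-point identity becomes an inequality in the wrong direction. A discrete EVI of the form you want would require convexity along \emph{generalized} geodesics based at $\wt\rho_j^{k}$ and a coordinate-wise version thereof, which is not what~\eqref{eqn: convexity} or your stated ingredients provide. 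Secondarily, even granting Step~1, your Step~3 conversion from $\tfrac{1}{K}\sum_{k\le K}u_k\lesssim K^{-\min(1,\alpha)}$ to a last-iterate bound via the approximate-descent inequality is not robust for small $\alpha$: with error $\|\varepsilon^{k+1}\|^2\asymp k^{-2\alpha}$ (from Lemma~\ref{lem: inexact_suff_decrease}), a dyadic argument gives only $u_K\lesssim K^{-\alpha}+K^{1-2\alpha}$, which is weaker than $K^{-\alpha}$ for $\alpha<1$.

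The paper avoids both obstacles by never invoking a three-point lemma. It combines the inexact gradient bound $\m F(\wt\rho^k)-\m F(\rho^\ast)\le\sqrt{(3L^2(m-1)+3/\tau^2)\W_2^2(\wt\rho^k,\wt\rho^{k-1})+3\|\varepsilon^k\|^2/\tau^2}\cdot\W_2(\wt\rho^k,\rho^\ast)$ (equation~\eqref{eqn: func_value_uppbd}) with the crude compactness bound $\W_2(\wt\rho^k,\rho^\ast)\le D_{\m X}$ and the inexact sufficient-decrease Lemma~\ref{lem: inexact_suff_decrease}. Squaring and rearranging produces a purely scalar recursion $A x_k^2+(1-B\xi_k)x_k\le x_{k-1}+C\xi_k^2$ with $x_k=\m F(\wt\rho^k)-\m F(\rho^\ast)$ and $\xi_k=\|\varepsilon^k\|$; the rate $x_k\le K/k^{\min(1,\alpha)}$ then follows by a direct induction (Lemma~\ref{lem: seq_convex_inexact}). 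The compactness assumption is precisely what replaces the telescoping role of $\W_2^2(\cdot,\rho^\ast)$ you were hoping for.
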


\section{Numerical Experiments}\label{sec:apps}

In this section, we demonstrate the application of our WPCG algorithms to approximate the posterior distribution via the mean-field variational inference in Example~\ref{ex:MFVI} and to compute the stationary distribution of the multi-species systems with cross-interaction in Example~\ref{ex:multi-species}.

\subsection{Mean-field Variational Inference}\label{sec: MFVI}


\begin{figure}[t]
    \centering
    \includegraphics[width=0.9\textwidth]{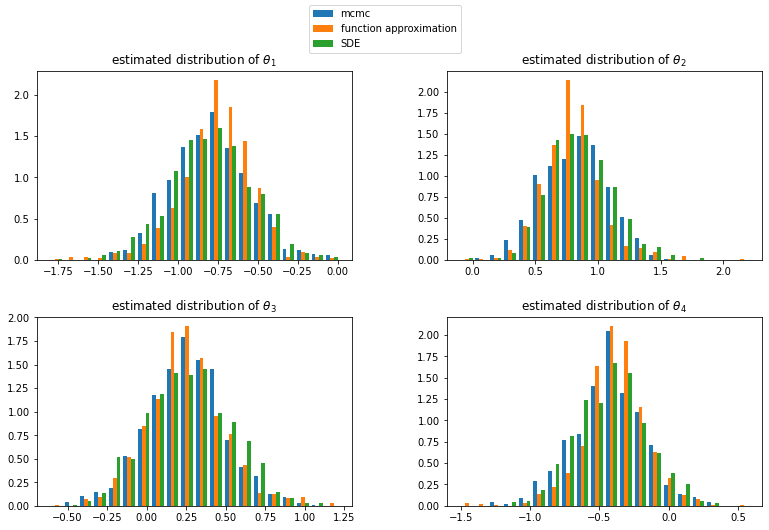}
    \caption{Histograms of particles used to approximate posterior distributions by MCMC and MFVI through WPCG-P with the FA approach and the SDE approach. Similar marginal distributions are derived by both WPCG-P (with the SDE approach or the FA approach) and MCMC.}
    \label{fig: MFVI_histogram}
\end{figure}

\begin{figure}[t]
    \centering
    \includegraphics[scale=0.8]{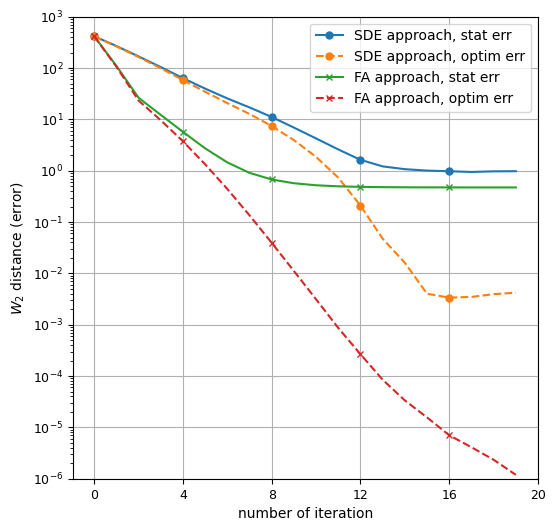}
    \caption{Numerical errors of the SDE approach and the FA approach. The numerical error $\W_2^2(\rho^k, \delta_{\theta^\ast})$ is first dominated by the optimization error $\W_2^2(\rho^k, \rho^\ast)$ which decays exponentially fast, and later dominated by the statistical error $\W_2^2(\rho^\ast, \delta_{\theta^\ast})$. The statistical error derived by the SDE approach is a bit larger than the one derived by the FA approach since we discretize the Langevin dynamics to approximate the Wasserstein proximal gradient scheme. The optimization error derived by the FA approach decreases exponentially fast as predicted by our theoretical results. In contrast, in the SDE approach, the optimization error will be dominated by the approximation error after several iterations.}
    \label{fig: MFVI_loss}
\end{figure}

Recall that $\Theta = \bigotimes_{j=1}^m\Theta_j$ is the parameter space, $p(x\,|\,\theta)$ is the likelihood function, $\pi$ is the prior density function, and $X_1, \dots, X_n$ are $n$ observations. In MFVI, we approximate the posterior distribution by the solution of the following optimization problem,
\begin{align*}
    \wht\pi_n = \argmin_{\rho = \bigotimes_{j=1}^m\rho_j}\KL(\rho\,\|\,\Pi_n),\quad\mx{s.t.}\quad \rho_j\in\ms P(\Theta_j)\quad\forall\, j\in[m],
\end{align*}
where $\Pi_n$ is the posterior distribution with the density function given by~\eqref{eqn: posterior_density}. As a corollary of Theorem~\ref{thm: main_thm}, we have the following convergence result about computing $\wht\pi_n$ via WPCG-P.

\begin{cor}\label{cor:MFVI_rate}
Let $\rho^k$ be the $k$-th iterate by applying WPCG-P (Algorithm \ref{algo: WPCG-P}) to MFVI with $V = V_n = -\sum_{i=1}^n\log p(X_i\,|\,\theta) - \log\pi(\theta)$, $h_j(x) = x\log x$, and $W_j = 0$. If $V_n$ is $\lambda_n$-strongly convex and $L_n$-smooth, and $0 < \tau < \frac{2m-1}{2L_n(m-1)^{3/2}}$, we have
\begin{align*}
    \W_2^2(\rho^k, \wht\pi_n) \leq (1+C_n\lambda_n)^{-k}\big[\KL(\rho^0\,\|\,\Pi_n) - \KL(\wht\pi_n\,\|\,\Pi_n)\big],
\end{align*}
where the constant
\begin{align*}
C_n = \frac{\frac{1}{2\tau} + (m-1)\big(\frac{1}{\tau} - L_n\sqrt{m-1}\big)}{2m[2L_n^2(m-1) + \frac{2}{\tau^2}]} > 0.
\end{align*}
\end{cor}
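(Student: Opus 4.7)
The plan is to verify that the MFVI setting satisfies all four Assumptions A--D of Theorem~\ref{thm: main_thm} with the identifications $L=L_n$ and $\lambda=\lambda_n$, and then invoke the theorem after rewriting the objective functional in terms of the KL divergence. The corollary is essentially bookkeeping on top of Theorem~\ref{thm: main_thm}, so I expect no genuinely new obstacle.

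First I would check the assumptions one by one. Assumption A reduces to the three standard conditions on $h_j(x)=x\log x$, already noted in the paper right after the statement of Assumption A; in particular $h_j(0)=0$ (with the convention $0\log 0=0$), $x^{d_j}h_j(x^{-d_j})=-d_j x\log x$ is convex and non-increasing on $\mathbb R_+$, and $\rho_j\log\rho_j\in L^1$ implies $\rho_j h_j'(\rho_j)=\rho_j(1+\log\rho_j)\in L^1$. Assumption B follows from $L_n$-smoothness of $V_n$: for any fixed $x_j$, applying the global Lipschitz inequality to the pair $(x_j,x_{-j})$ and $(x_j,x_{-j}')$ and then projecting onto the $j$-th block coordinates gives $\|\nabla_j V_n(x_j,x_{-j})-\nabla_j V_n(x_j,x_{-j}')\|\le L_n\|x_{-j}-x_{-j}'\|$, so Assumption B holds with $L=L_n$. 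Assumption C holds trivially with $f_j=g_j=0$: $\widetilde V=V_n$ is convex (a fortiori from strong convexity) and $\widetilde W_j\equiv 0$ is convex. Assumption D follows from Proposition~\ref{prop: suff_cond_QG}, which says $\lambda$-strong convexity of $V$ implies the $\lambda$-QG condition on $\mathcal F$; hence here the $\lambda_n$-QG condition holds.

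Next I would translate between $\mathcal F$ and the KL divergence. For $\rho=\bigotimes_{j=1}^m\rho_j$, independence gives $\int \rho\log\rho=\sum_{j=1}^m\int\rho_j\log\rho_j$, so
\begin{align*}
\mathcal F(\rho_1,\dots,\rho_m)
= \int V_n\,\dd\rho + \sum_{j=1}^m\int\rho_j\log\rho_j
= \KL(\rho\,\|\,\Pi_n) + \log Z_n,
\end{align*}
where $Z_n$ is the normalizing constant of the posterior. Hence $\mathcal F(\rho^0)-\mathcal F(\rho^\ast)=\KL(\rho^0\,\|\,\Pi_n)-\KL(\wht\pi_n\,\|\,\Pi_n)$, and in particular WPCG-P applied to this $\mathcal F$ coincides with the MFVI iteration stated in the corollary.

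Finally, with Assumptions A--D verified and the step-size restriction $0<\tau<\frac{m-1/2}{L_n(m-1)^{3/2}}=\frac{2m-1}{2L_n(m-1)^{3/2}}$ matching the hypothesis of Theorem~\ref{thm: main_thm}, I would directly apply that theorem with $L=L_n$ and $\lambda=\lambda_n$, and substitute the KL identity above to obtain the stated bound with the constant $C_n=C_1(m,L_n,\tau)$ of the desired form. The only subtlety to watch is the alignment of constants: the theorem's conclusion carries a prefactor $2/\lambda_n$ in front of $[\mathcal F(\rho^0)-\mathcal F(\rho^\ast)]$, so one should either keep that factor in the statement or absorb it into the right-hand side; aside from this cosmetic point, no further argument is required.
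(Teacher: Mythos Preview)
Your proposal is correct and matches the paper's intent: the corollary is stated without proof because it is an immediate specialization of Theorem~\ref{thm: main_thm} once Assumptions A--D are verified for the MFVI functional and $\mathcal F$ is rewritten as $\KL(\cdot\,\|\,\Pi_n)$ up to an additive constant. Your observation about the missing $2/\lambda_n$ prefactor is apt---the corollary as printed drops the factor that Theorem~\ref{thm: main_thm} carries, so this is indeed a cosmetic discrepancy in the paper rather than a gap in your argument.
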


Note that in Corollary~\ref{cor:MFVI_rate}, both $\lambda_n$ and $L_n$ depend on the samples $X_1, \dots, X_n$. When the log-likelihood function $\log p(x\,|\,\theta)$ satisfies certain mild conditions \citep[e.g., Assumptions 2 and 3 in][]{mei2016landscape}, the sample convexity parameter $\lambda_n$ and the smoothness parameter $L_n$ will concentration in the small neighborhoods of the population convexity parameter $\lambda = \mb E_{\theta^\ast}\lambda_n$ and the smoothness parameter $L = \mb E_{\theta^\ast}L_n$ respectively with high probability.

\begin{rem}
When applying WPCG-P to MFVI, our method coincides with representations in \citep{ghosh2022representations}. However, \citet{ghosh2022representations} only studied the convergence of the piecewise constant interpolation $\{\rho_t = \rho^{\lfloor t/\tau\rfloor}: t\geq 0\}$ towards the solution of its corresponding WGF \citep[Equation (15) in][]{ghosh2022representations} as the step size $\tau\to 0$ when $V_n$ is strongly convex.
\citet{yao2022mean} proposed the MF-WGF algorithm for solving MFVI with latent variables numerically. They showed exponential convergence of iterates towards the mean-field approximation $\wht\pi_n$ of the true posterior in $\W_2$ sense. Their proof heavily depended on the strong convexity of the negative log-likelihood function, while our analysis only uses the strong convexity to show QG condition.
\citet{lambert2022variational} studied Gaussian variational inference (i.e., $m=1$ and the variational family is restricted to all Gaussian distributions) and showed that the corresponding gradient flow (so-called Bures--Wasserstein gradient flow) converges to the Gaussian variational approximation exponentially fast in $\W_2$ sense. Compared with Gaussian variational inference, our method is more flexible with the number of blocks and the variational family.
\qed
\end{rem}

To support our theoretical findings, we simulate a Bayesian Logistic Regression model with prior $\pi = \m N(0, 4I_4)$ and data generating process
\begin{align*}
X_i\sim\m N(0, I_4), \quad y_i\,|\,X_i, \theta &\sim \text{Bernoulli}(p_i) \quad\mbox{where} \quad \log\frac{p_i}{1-p_i} = X^T_i\theta,
\end{align*}
with the ground truth being $\theta^\ast = (-1, 1, 0.3, -0.3)$. $n=100$ samples generated from the true model are then used to estimate the posterior by implementing MFVI through the WPCG-P algorithm. 
Two different numerical methods, the FA approach and the SDE approach, are implemented. In both numerical methods, $B = 1000$ particles are used to approximate each marginal distribution. In the FA approach, a neural network with three fully connected hidden layers is used to approximate the optimal transport map. Each hidden layer consists of $1000$ neurons, and the activation function is ReLu defined as $\text{ReLu}(x) = \max\{x, 0\}$. 

Figure \ref{fig: MFVI_histogram} compares the WPCG-P algorithm with MCMC. The results show that similar approximations of each marginal distribution of the posterior distribution are derived by both methods. Figure \ref{fig: MFVI_loss} presents the numerical errors by applying the FA approach and the SDE approach in WPCG-P. The numerical error $\W_2^2(\rho^k, \delta_{\theta^\ast})$ is first dominated by the optimization error $\W_2^2(\rho^k, \rho^\ast)$ (corresponding to the decreasing parts of the solid lines) and then dominated by the statistical error $\W_2^2(\rho^\ast, \delta_{\theta^\ast})$ (corresponding to the horizontal parts of the solid lines) after several iterations. In the SDE approach, the optimization error will finally be dominated by the approximation error, which appears since we use discretized SDE to approximate the Wasserstein proximal gradient scheme, and stop decreasing as shown by the orange dashed line. As comparison, the optimization error in the FA approach will decrease exponentially fast as predicted by our theoretical findings, which is shown by the red dashed line.

\subsection{Equilibrium in Multi-species Systems}\label{subsec:multi-species_systems}

\begin{figure}[t]
    \centering
    \includegraphics[scale=0.75]{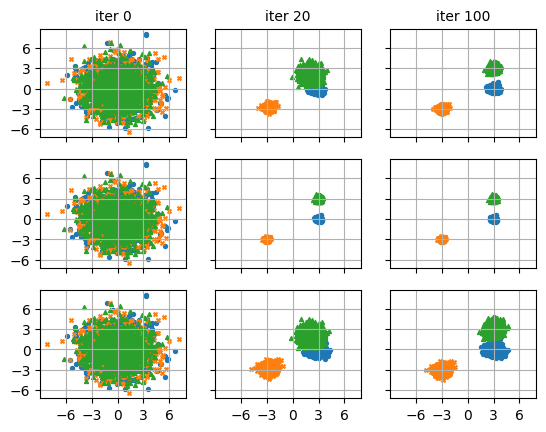}
\caption{Scatter plots of all three species for different $\alpha$ and $\beta$. We use $B=1000$ particles to approximate each species. The first row corresponds to $\alpha = 1$ and $\beta=1$; the second row corresponds to $\alpha=5$ and $\beta=1$; the third row corresponds to $\alpha=1$ and $\beta = 10$. Species concentrate 
more around the centers $\theta_1$, $\theta_2$, and $\theta_3$  with larger $\alpha$ and smaller $\beta$.}
    \label{fig: scatter_plot}
\end{figure}

\begin{figure*}[t!]
    \centering
    \begin{subfigure}[t]{0.49\textwidth}
        \centering
        \includegraphics[height=3in]{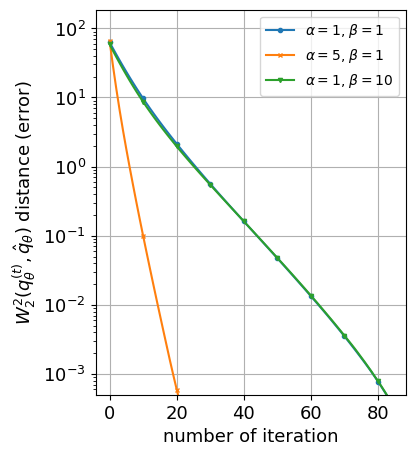}
        \caption{Optimization error $\W_2^2(\rho_\theta^{k}, \rho_\theta^\ast)$.}
        \label{fig: optim_err}
    \end{subfigure}%
    ~ 
    \begin{subfigure}[t]{0.49\textwidth}
        \centering
        \includegraphics[height=3in]{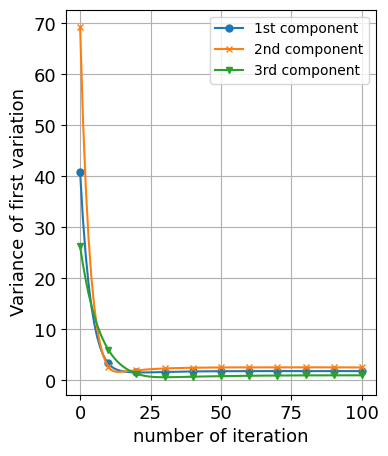}
        \caption{$\text{Var}_{\rho_j^{k}}\big(\frac{\delta\m F}{\delta\rho_j}(\rho^{k})(X_j^k)\big)$ for $1\leq j\leq 3$.}
        \label{fig: var_firstvaiation}
    \end{subfigure}
    \caption{(a) Comparison of optimization error $\W_2^2(\rho_\theta^{k}, \rho_\theta^\ast)$ for different $\alpha$ and $\beta$. All these lines are straight, indicating that the optimization error decays exponentially fast. Contraction rate does not change by only changing $\beta$ (blue line v.s.~green line), while it gets smaller (i.e., faster convergence) when $\alpha$ gets larger (orange line v.s.~blue line). (b) Variance of first variation $\text{Var}_{\rho_j^{k}}\big(\frac{\delta\m F}{\delta\rho_j}(\rho^{k})(X_j^k)\big)$ versus number of iterations. That the variance converges to $0$ indicates that the first variation $\frac{\delta\m F}{\delta\rho_j}(\rho^k)$ converges to a constant, which implies that $(\rho_1^{k}, \rho_2^{k}, \rho_3^{k})$ converges to the minimum of $\m F$.}
    \label{fig: cross_diffusion_loss}
\end{figure*}

\begin{figure}[t]
\centering
    \includegraphics[scale=0.8]{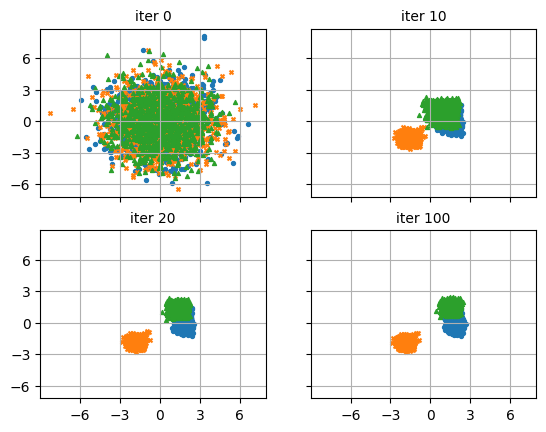}
    \caption{Scatter plots of all three species when the potential $V$ contains a super quadratic term $\|x\|^4/4$. As expected, all species converge to their equilibrium; they get closer to the origin compared with the case without the super quadratic term.}
    \label{fig: scatter_plot_superquad}
\end{figure}

\begin{figure*}[t!]
    \centering
    \begin{subfigure}[t]{0.49\textwidth}
        \centering
        \includegraphics[height=3in]{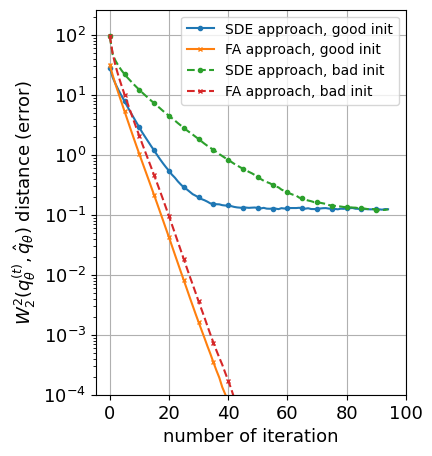}
        \caption{Optimization error $\W_2^2(\rho_\theta^{k}, \rho_\theta^\ast)$.}
        \label{fig: optim_err_superquad}
    \end{subfigure}%
    ~ 
    \begin{subfigure}[t]{0.49\textwidth}
        \centering
        \includegraphics[height=3in]{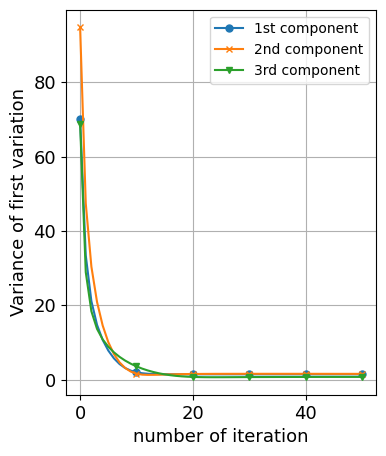}
        \caption{$\text{Var}_{\rho_j^{k}}\big(\frac{\delta\m F}{\delta\rho_j}(\rho^{k})(X_j^k)\big)$ for $1\leq j\leq 3$.}
        \label{fig: first_variation_superquad}
    \end{subfigure}
    \caption{(a) Comparison of optimization error $\W_2^2(\rho_\theta^{k}, \rho_\theta^\ast)$ derived by the SDE approach and the FA approach when $h_j(x) = x\log x$. The optimization error derived by applying the FA approach (the orange line and the red line) decays exponentially fast despite of the existence of the super-quadratic term. In contrast, the optimization error derived by applying the SDE approach (the green line and the blue line) is dominated due to the approximation error. Moreover, due to the super-quadratic term, it takes more iterations for the SDE approach to converge when the initialization gets farther from the origin.  (b) Variance of the first variation $\text{Var}_{\rho_j^{k}}\big(\frac{\delta\m F}{\delta\rho_j}(\rho^{k})(X_j^k)\big)$ versus number of iterations. That the variance converges to $0$ indicates that the first variation $\frac{\delta\m F}{\delta\rho_j}(\rho^k)$ converges to a constant, which implies that $(\rho_1^{k}, \rho_2^{k}, \rho_3^{k})$ converges to the minimum of $\m F$.}
    \label{fig: cross_diffusion_loss_superquad}
\end{figure*}

Here we apply WPCG to find the stationary distribution of the following non-local multi-species cross-interaction model with diffusion, whose aggregation equation is given in~\eqref{eqn:aggregation_model}. 
Our numerical result below shows that the algorithm converges exponentially fast as predicted by our theory when the corresponding objective functional is convex and satisfies the QG condition.
We also conduct a numerical study to compare the FA approach and the SDE approach when $\m H_j(\rho_j) = \int\rho_j\log\rho_j$ for all $j\in[m]$. The numerical study shows that the FA approach is not affected by the existence of super-quadratic terms in the potential energy function. 

Let us start from the following result which connects the PDE~\eqref{eqn:aggregation_model} and the optimization problem~\eqref{eqn: obj_func}.
\begin{prop}\label{lem: multi_species_system}
Consider the optimization problem~\eqref{eqn: obj_func} with
\begin{align*}
    V(x_1, \dots, x_m) &= \sum_{i=1}^m V_i(x_i) - \sum_{1\leq i<j\leq m}K_{ij}(x_i - x_j)\\
    W_j(x_j, x_j') &= K_{jj}(x_j - x_j')
\end{align*}
If Assumptions A, B, C, and D hold, and $K_{ij} = K_{ji}$ are even functions, then the solution $\rho^\ast  = (\rho_1^\ast, \dots, \rho_m^\ast)$ of problem~\eqref{eqn: obj_func} is the stationary distribution of the system~\eqref{eqn:aggregation_model}.
\end{prop}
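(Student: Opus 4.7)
I plan to derive the stationary condition of the aggregation equation~\eqref{eqn:aggregation_model} from the first-order optimality condition satisfied by any minimizer $\rho^\ast$ of $\m F$ in~\eqref{eqn: obj_func}. Observing that $\partial_t \rho_j = 0$ in~\eqref{eqn:aggregation_model} is ensured as soon as the velocity field $\nabla V_j - \sum_{i=1}^m (\nabla K_{ij})\ast\rho_i + \nabla h_j'(\rho_j)$ vanishes on $\mathrm{supp}(\rho_j)$, the task reduces to showing that this velocity field coincides with the Wasserstein gradient $\nabla \frac{\delta\m F}{\delta\rho_j}(\rho^\ast)$, which vanishes $\rho_j^\ast$-a.e.\ by the first-order optimality condition of the convex problem~\eqref{eqn: obj_func} (cf.\ Lemma~\ref{lem: ot_map}, whose hypotheses are met by Assumptions A, B, C).

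The core of the argument is a direct computation of $\nabla_{x_j}\frac{\delta\m F}{\delta\rho_j}(\rho^\ast)$, split into three pieces. First, for the interaction energy $\m V$, the separable form of $V$ gives, modulo a term not depending on $x_j$,
\begin{align*}
\tfrac{\delta\m V}{\delta\rho_j}(\rho)(x_j) = V_j(x_j) - \sum_{i<j}\int K_{ij}(x_i-x_j)\,\dd\rho_i(x_i) - \sum_{k>j}\int K_{jk}(x_j-x_k)\,\dd\rho_k(x_k).
\end{align*}
Taking $\nabla_{x_j}$ and using that $K_{ij}$ is even (so $\nabla K_{ij}$ is odd) together with $K_{ij}=K_{ji}$, both sums can be consolidated into $-\sum_{i\neq j}(\nabla K_{ij}\ast\rho_i)(x_j)$. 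Second, Assumption A guarantees $\frac{\delta\m H_j}{\delta\rho_j}(\rho_j) = h_j'(\rho_j)$, contributing the diffusion drift $\nabla h_j'(\rho_j^\ast)(x_j)$. Third, for $\m W_j$, symmetrizing its defining double integral and using that $K_{jj}$ is even produces the missing $-(\nabla K_{jj}\ast\rho_j^\ast)(x_j)$ with the correct coefficient, absorbing the $i=j$ index into the previous sum.

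Summing the three contributions turns the optimality condition $\nabla_{x_j}\frac{\delta\m F}{\delta\rho_j}(\rho^\ast)(x_j)=0$ into
\begin{align*}
\nabla V_j(x_j) - \sum_{i=1}^m(\nabla K_{ij}\ast\rho_i^\ast)(x_j) + \nabla h_j'(\rho_j^\ast(x_j)) = 0 \quad \text{on } \mathrm{supp}(\rho_j^\ast),
\end{align*}
for every $j\in[m]$. Multiplying by $\rho_j^\ast$ and taking the divergence yields $\partial_t\rho_j^\ast\equiv 0$, confirming that $\rho^\ast$ is a stationary distribution of~\eqref{eqn:aggregation_model}.

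The main technical obstacle will be the sign and constant bookkeeping when converting the ``incoming'' interaction integrals $\int \nabla K_{ij}(x_i-x_j)\,\dd\rho_i(x_i)$ into convolution form: the minus sign in front of the pair-sum in $V$, the sign flip from differentiating $K_{ij}(x_i-x_j)$ in $x_j$, and the odd symmetry of $\nabla K_{ij}$ must cancel consistently to match the convention $(\nabla K_{ij})\ast\rho_i$ used in the PDE, and the self-interaction's double-integral convention has to match the normalization of the $K_{jj}$ term correctly. A secondary technical issue is justifying the pointwise (rather than merely distributional) validity of the first-order optimality condition at $\rho^\ast$, which is handled by invoking Lemma~\ref{lem: ot_map} under the compactness of $\m X$ and the integrability/regularity packaged into Assumptions A--C; Assumption D enters only through the existence (and, when $\lambda>0$, uniqueness) of $\rho^\ast$.
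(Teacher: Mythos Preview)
Your proposal is correct and follows essentially the same approach as the paper's proof: invoke the first-order optimality condition (Lemma~\ref{lem: ot_map}) to conclude that the first variation $\frac{\delta\m F}{\delta\rho_j}(\rho^\ast)$ is constant $\rho_j^\ast$-a.e., compute this first variation explicitly for the given $V$ and $W_j$ using the symmetry and evenness of $K_{ij}$, take the gradient to obtain that the velocity field in~\eqref{eqn:aggregation_model} vanishes on $\mathrm{supp}(\rho_j^\ast)$, and conclude stationarity. Your more explicit treatment of the sign and normalization bookkeeping is in fact warranted, as the paper's own statement and proof are somewhat loose on this point.
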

\begin{rem}\label{rmk: symmetirc_assump}
(i) In this lemma, we assume $K_{ij} = K_{ji}$ for all $i, j\in[m]$ so that the evolution of $\{\rho_j(\cdot, t): j\in[m], t\geq 0\}$ follows the WGF of some functional $\m F$. When $m=2$, this condition can be relaxed to $K_{12} = \alpha K_{21}$ for some constant $\alpha > 0$ by rescaling the WGF of $\rho_1$ and $\rho_2$ \citep{di2013measure}.
(ii) The choice of $h_j(x) = x\log x$ corresponds to the standard diffusion without medium of species $j$, and \eqref{eqn:aggregation_model} turns to be the PDE of multi-species stochastic interacting particle systems \citep{daus2022random}. As a comparison, $h_j(x) = x^{m_j}$\footnote{Here the exponent $m_j>1$ depends on physical properties of the diffusing material.} corresponds to a porous medium type diffusion \citep{otto2001geometry}.
When $m=1$, \eqref{eqn:aggregation_model} is the aggregation equation of interacting particle systems \citep{cabrales2020numerical, carrillo2019aggregation, li2010wellposedness}; when $m=2$ and $V_j = h_j = 0$, \eqref{eqn:aggregation_model} turns into the PDE of non-local interaction systems with two species \citep{di2013measure, evers2017equilibria}. To our knowledge, Equation~\eqref{eqn:aggregation_model} has not been studied for general $m\in\mb N^\ast$ and functions $h_j$, $j\in[m]$. We believe that our theory is also useful to study the existence and the uniqueness of this kind of PDEs from the perspective of WGF.
\qed
\end{rem}

In the numerical study, we consider the following test example \citep{daus2022random, jin2020random} with $m=3$ species in $\mb R^2$. We specify the functions
\begin{align*}
    K_{ij}(x) = \frac{Q_iQ_j}{2}\arctan\|x\|^2,\quad h_j(\rho_j)=\beta\rho_j^2, \quad\mbox{and}\quad V_j(x_j) = \frac{\alpha r_j}{2}\|x_j - \theta_j\|^2, \quad 1\leq i,j\leq 3,
\end{align*}
with model parameters $(Q_1, Q_2, Q_3) = (1, -1, 0.5)$, $(r_1, r_2, r_3) = (6, 7, 3)$, and $\theta_1 = (3, 0)$, $\theta_2 = (-3, -3)$, and $\theta_3 = (3, 3)$. It is easy to check that (see Appendix \ref{appendix: multi_species} for more details)
\begin{align*}
    V(x_1, x_2, x_3) = \sum_{j=1}^n \frac{\alpha r_j}{2}\|x_j - \theta_j\|^2 - \sum_{1\leq i<j\leq 3}\frac{Q_iQ_j}{2}\arctan\|x_i-x_j\|^2
\end{align*}
and 
\begin{align*}
    W_j(x_j, x_j') = -\frac{Q_j^2}{4}\arctan\|x_j - x_j'\|^2,\quad j=1, 2, 3
\end{align*}
satisfy Assumptions A, B, C, and D in Section \ref{sec: WPCG} when $\alpha\geq 1$ and $\beta \geq 0$. We will apply WPCG with the FA approach by solving the optimization problem~\eqref{eqn: FA_generalh}, since the corresponding $\m H_j$ is not the negative self-entropy functional.

Figures \ref{fig: scatter_plot}, \ref{fig: cross_diffusion_loss}, \ref{fig: scatter_plot_superquad}, and \ref{fig: cross_diffusion_loss_superquad} present the numerical results of this example. $B=1000$ particles are generated to approximate each species, and a neural network with 2 hidden layers is used to solve~\eqref{eqn: FA_generalh} in the FA approach. Each hidden layer consists of 800 neurons, and the activation function is ReLu. In Figures \ref{fig: scatter_plot_superquad} and \ref{fig: cross_diffusion_loss_superquad}, we add a super-quadratic term $\|x\|^4/4$ to the potential function $V$ and change the entropy functional to the negative self-entropy to compare the SDE approach and the FA approach.

Figure~\ref{fig: scatter_plot} shows the scatter plots of all species with different $\alpha$ and $\beta$. When $\alpha$ gets larger, all species concentrate more around the corresponding centers ($\theta_1$, $\theta_2$, and $\theta_3$) due to a larger external force. In contrast, larger $\beta$ makes all species less concentrate since the entropy term penalizes the concentration of species. 
Figure~\ref{fig: cross_diffusion_loss} shows the optimization error $\W_2^2(\rho_\theta^{k}, \rho_\theta^\ast)$ for different $\alpha$ and $\beta$ and the variance of the first variation when $\alpha=\beta=1$. In Figure~\ref{fig: optim_err}, the optimization error decays faster with larger $\alpha$ (compare the blue line with the orange line) since the convexity gets stronger, while changing $\beta$ will not change the convergence speed (compare the blue line with the green line) since the entropy term does not provide any convexity to the whole functional. Figure~\ref{fig: var_firstvaiation} plots the variance of the first variation $\frac{\delta\m F}{\delta\rho_j}(\rho^{k})(X_j^{k})$ where $X_j^{k}\sim\rho_j^{k}$. This variance converges to $0$ if and only if the first variation $\frac{\delta\m F}{\delta\rho_j}(\rho^{k})$ converges to a constant [$\rho_j$]-a.e., which indicates that $(\rho_1^{k}, \rho_2^{k}, \rho_3^{k})$ converges to the minimum of $\m F$. 

Figure~\ref{fig: scatter_plot_superquad} shows the scatter plot of all species when there exists an extra super-quadratic term $\|x\|^4/4$ in the potential function $V$. Due to this super-quadratic term, all species get closer to the origin when the system is at its equilibrium. 
Figure~\ref{fig: cross_diffusion_loss_superquad} shows the optimization error $\W_2^2(\rho_\theta^k, \rho_\theta^\ast)$ and the variance of the first variation. Figure~\ref{fig: optim_err_superquad} compares the optimization error derived by WPCG with the FA approach and the SDE approach. Compare with the FA approach, the optimization error derived by applying the SDE approach (the blue line and the green line) is dominated by the approximation error after several iterations. Due to the existence of the super-quadratic term, a smaller step size has to be chosen to make the SDE approach converge when the initialization gets farther from the origin, which increases the number of iterations for the SDE approach to converge. In contrast, the optimization error derived by the FA approach (the red line and the orange line) is not affected by the super-quadratic term as predicted by our theoretical result. Fig~\ref{fig: first_variation_superquad} plots the variance of the first variation $\frac{\delta\m F}{\delta\rho_j}(\rho^k)$. The variance converges to zero as a sign of convergence of $\rho^{k}$ to the minimum of $\m F$.

\subsection{Real Data Example}
We apply the WPCG-P algorithm to analyze the Pima Indian diabetes data set~\citep{smith1988using}. This data set comprises 8 medical predictors (\texttt{pregnancies}, \texttt{glucose}, \texttt{blood pressure}, \texttt{skin thickness}, \texttt{insulin}, \texttt{body mass index}, \texttt{diabetes pedigree function}, \texttt{age}) and a binary outcome variable ($0$ for no diabetes, $1$ have diabetes) from 768 individuals.

We construct a Bayesian logistic regression model (with an intercept term) to predict the outcome based on all 8 medical predictors. A stratified sample of 68 data points is set aside as the test set, while the remaining 700 points constitute the training set. We employ the WPCG-P algorithm with the FA approach to compute the mean-field variational approximation of the posterior distribution over the intercept and predictor coefficients. Figure~\ref{fig: optim_loss_realdata} depicts the optimization error $\W_2^2(\rho^k, \rho^\ast)$ across iterations, which decays exponentially fast as predicted by our theoretical findings.

\begin{figure}
    \centering
    \includegraphics[scale=0.6]{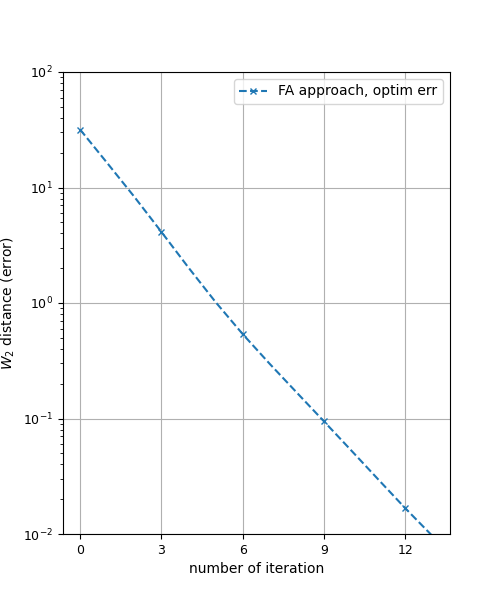}
    \caption{Optimization error $\W_2^2(\rho^k, \rho^\ast)$.}
    \label{fig: optim_loss_realdata}
\end{figure}

Furthermore, We compare our algorithm with MCMC for sampling from the posterior distribution. Both WPCG-P and MCMC achieve a misclassification rate $19.1\%$ and a cross entropy loss $0.496$ on the test set. Figure~\ref{fig: box_plot} presents the box plot of the mean-field variational approximations of the posterior distributions over the intercept and predictor coefficients. Consistent with domain knowledge, the analysis indicates that larger values of \texttt{pregnancies},\texttt{Glucose}, \texttt{BMI}, and \texttt{diabetes pedigree function} are associated with an increasing possibility of having diabetes.

\begin{figure}
    \centering
    \includegraphics[scale=0.65]{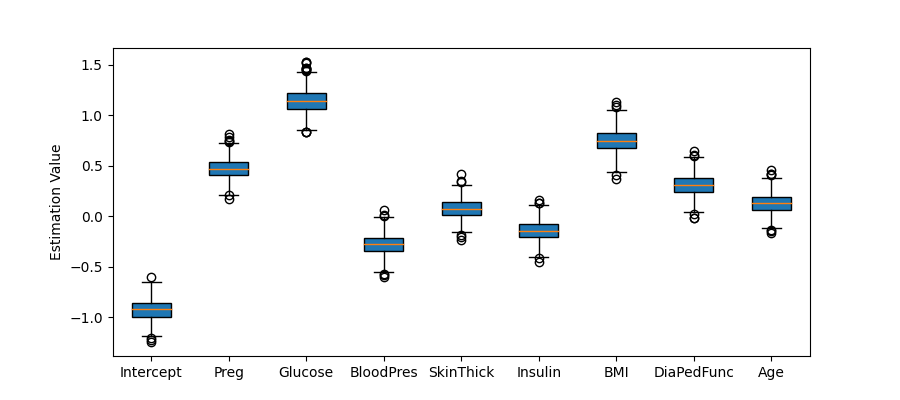}
    \caption{Box plot of the mean-field variational approximation of the posterior distribution of the intercept and all predictors.}
    \label{fig: box_plot}
\end{figure}


\section{Conclusions and Discussions}\label{sec: discussion}
In this paper, we introduced the WPCG algorithms for solving multivariate composite convex minimization problem on the Wasserstein space. We established its exponential convergence results with different update schemes under the quadratic growth condition. When this condition is not met, we also showed a slower polynomial convergence result for the WPCG algorithm. Additionally, we analyze the convergence behavior of the inexact WPCG algorithm with the parallel update scheme. We believe that similar results hold for the other two update schemes. We also conducted numerical studies about mean-field variation inference and multi-species systems to verify the predictions from our theory.

\noindent To conclude this paper, we list several open problems as future directions.

\begin{enumerate}
    \item An interesting topic is to develop the PL-type condition for multivariate objective functionals and investigate its relationship with the QG condition. This will enable us to relax the assumptions of convexity and the QG condition, and analyze the convergence rate of the WPCG algorithm for non-convex objective functionals. 

    \item It is interesting to see whether it would be beneficial to augment the Euclidean space $\mathbb R^m$ to the space of all product measures $\bigotimes_{j=1}^m \rho_j$, which includes $\mathbb R^m$ as a special case by restricting each $\rho_j$ to be a point mass measure, when minimizing a function $V$ over $\mathbb R^m$. This augmentation may help avoid the algorithm quickly getting trapped into a local minimum in case of non-convexity, similar to SGD which avoids getting trapped by injecting random noise; the augmented algorithm can be viewed as the infinite particle limit of an evolutionary algorithm for minimizing $V$.

    \item It is still unknown if the convergence rates of WPCG are optimal with respect to the number of blocks $m$. The dependence on $m$ will be important when $m$ is very large. Also, The current convergence result of WPCG-R seems worse than the one in the coordinate GD with the random update scheme on the Euclidean space. It is interesting to see if this convergence rate is due to the analyzing strategies or the difference between the Wasserstein space and the Euclidean space.

    \item Another potential topic is to consider the coordinate Wasserstein forward-backward gradient descent algorithm, which updates each coordinate by approximating the potential energy $\m V$ in first order (forward step) followed by a proximal descent step (backward step). This algorithm corresponds to the  proximal coordinate gradient descent algorithm in the Euclidean case. The case $m=1$ on the Wasserstein space has been studied by~\cite{salim2020wasserstein}. We believe our analysis of WPCG can be extended to the coordinate Wasserstein forward-backward gradient descent algorithm.

    \item In many problems in statistics (such as MFVI in Section~\ref{sec: MFVI}), the potential function takes the form $V = \sum_{i=1}^n V_i$, where $n$ can be treated as the sample size. When $n$ is large, there will be huge computational cost to calculate $\nabla V$ in the proximal gradient descent step. One possible way is to approximate $\nabla V\approx \frac{n}{n_s}\sum_{l=1}^{n_s}\nabla V_{i_l}$ by subsampling $i_l$ from $[n] := \{1, \dots, n\}$ independently for $l\in[n_s]$ in each update, where $n_s$ is the batch size. It is interesting to see how this stochastic WPCG algorithm converges and the impact of the batch size $n_s$.
\end{enumerate}


\acks{Xiaohui Chen was partially supported by NSF CAREER grant DMS-2347760, NSF grant DMS-2413404, and a gift from the Simons Foundation. Yun Yang was partially supported by NSF grant DMS-2210717.}



\newpage
\begin{center}
    {\LARGE \bf Appendices}
\end{center}
\appendix
\section{Proof of Main Results}\label{app: main proof}
Since the whole proof is quite involved, we will first provide with the sketch, and detail the proof in next several subsections. For every update scheme when $\lambda > 0$, we proceed with the following two steps

\vspace{0.5em}
\noindent\underline{Step 1.} (Sufficient decrease.) In this step, we will prove
\begin{align*}
\m F(\rho^{k-1}) - \m F(\rho^k) \geq C\W_2^2(\rho^{k-1}, \rho^k)
\end{align*}
holds with some constant $C > 0$ depending only on $\tau$, $\lambda$, $L$, and the update scheme.


\vspace{0.5em}
\noindent\underline{Step 2.} (Bound of functional value.) In this step, we will prove
\begin{align*}
\m F(\rho^k) - \m F(\rho^\ast) \leq C'\W_2^2(\rho^k, \rho^{k-1})
\end{align*}
holds with some constant $C' > 0$ depending only on $\tau$, $\lambda$, $L$, and the update scheme.

With the above two steps, we have
\begin{align*}
\m F(\rho^{k-1}) - \m F(\rho^k) \geq C\W_2^2(\rho^{k-1}, \rho^k) \geq \frac{C}{C'}\big(\m F(\rho^k) - \m F(\rho^\ast)\big)
\end{align*}
This implies
\begin{align*}
    \m F(\rho^k) - \m F(\rho^\ast) \leq \Big(1+\frac{C}{C'}\Big)^{-1}\big(\m F(\rho^{k-1}) - \m F(\rho^\ast)\big) \leq\cdots\leq \Big(1+\frac{C}{C'}\Big)^{-k}\big(\m F(\rho^0) - \m F(\rho^\ast)\big).
\end{align*}
Since $\m F$ satisfies ($\lambda$-QG) condition, we have
\begin{align*}
\frac{\lambda}{2}\W_2^2(\rho^k, \rho^\ast) \leq \Big(1 + \frac{C}{C'}\Big)^{-k}\big(\m F(\rho^0) - \m F(\rho^\ast)\big).
\end{align*}
This implies the desired result. Next, we will detail the proof for different update schemes.

\subsection{Parallel Update Scheme: Proof of Theorem \ref{thm: main_thm}}
As introduced in the sketch, we will use the following two results to bound the decrease of functional values and the difference between the functional value and the minimum of $\m F$, whose proof will be postponed to Appendix~\ref{app: additional_proof}.
\begin{lemma}[sufficient decrease]\label{lem: sufficient_decrease_cond}
Under Assumptions A, B, and C, when the step size satisfies $0 < \tau < \frac{2m-1}{2L(m-1)^{3/2}}$, we have
\begin{align*}
    \m F(\rho^{k-1}) - \m F(\rho^{k}) \geq C_4(m, L, \tau)\W_2^2(\rho^{k-1}, \rho^{k}) > 0,
\end{align*}
where the constant
\begin{align*}
    C_4(m, L,\tau) = \frac{1}{m}\bigg[\frac{1}{2\tau} + (m-1)\Big(\frac{1}{\tau} - L\sqrt{m-1}\Big)\bigg].
\end{align*}
\end{lemma}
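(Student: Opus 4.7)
The plan is to combine the one-step descent produced by the $m$ parallel Wasserstein proximal updates with a careful estimate of the resulting cross-coordinate interaction error controlled by Assumption B. First, I would exploit the fact that for each $j\in[m]$, the iterate $\rho_j^k$ minimizes the strongly convex functional $\rho_j \mapsto g_j(\rho_j) + \frac{1}{2\tau}\W_2^2(\rho_j,\rho_j^{k-1})$, where $g_j(\rho_j) := \m V(\rho_j,\rho_{-j}^{k-1}) + \m H_j(\rho_j) + \m W_j(\rho_j)$ is geodesically convex by Assumptions A and C. Evaluating the resulting strong-convexity (or, weaker, the plain descent) inequality at $\rho_j = \rho_j^{k-1}$ yields, for each $j$, a per-coordinate descent bound of the form $g_j(\rho_j^{k-1}) - g_j(\rho_j^k) \geq c_\tau\,\W_2^2(\rho_j^{k-1},\rho_j^k)$ with $c_\tau$ of order $1/\tau$.

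Summing these $m$ inequalities over $j$ and using the product-space identity $\W_2^2(\rho^{k-1},\rho^k) = \sum_j \W_2^2(\rho_j^{k-1},\rho_j^k)$, I would reorganize the resulting estimate for $\m F(\rho^{k-1}) - \m F(\rho^k)$ so that the $\m H_j$ and $\m W_j$ contributions cancel cleanly. After this rearrangement, the only piece not directly controlled by the proximal terms is the cross-coordinate interaction error $\mathcal E_k := \m V(\rho^k) + (m-1)\m V(\rho^{k-1}) - \sum_{j=1}^m \m V(\rho_j^k,\rho_{-j}^{k-1})$, which quantifies the gap between the full joint change of $\m V$ and the sum of its single-coordinate changes; this error vanishes trivially when $m=1$.

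The main obstacle is bounding $\mathcal E_k$, and this is where Assumption B enters. I would couple $\rho_j^{k-1}$ to $\rho_j^k$ through the optimal transport map $T_j = T_{\rho_j^{k-1}}^{\rho_j^k}$ and telescope coordinate by coordinate along the chain of mixed product measures $\bigotimes_{i\leq j}\rho_i^k \otimes \bigotimes_{i>j}\rho_i^{k-1}$. Each telescope step produces a difference of $\nabla_j V$ evaluated at two configurations differing only in the ``$-j$'' block, which by the off-diagonal Lipschitz Assumption B is bounded by $L$ times the $\ell_2$ norm of that perturbation. A Cauchy--Schwarz step followed by Young's inequality $ab \leq \tfrac{\epsilon}{2}a^2 + \tfrac{1}{2\epsilon}b^2$ with the optimal choice $\epsilon = 1/\sqrt{m-1}$ then yields $\mathcal E_k \leq L\sqrt{m-1}\,\W_2^2(\rho^{k-1},\rho^k)$. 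Combining this with the per-coordinate descent bound, the precise $C_4(m,L,\tau)$ in the statement---including its $1/m$ prefactor and the convex combination of $\frac{1}{2\tau}$ and $\frac{m-1}{\tau}$---emerges from careful bookkeeping of constants, e.g.\ by applying the weaker $\frac{1}{2\tau}$ descent bound to one distinguished diagonal term and the stronger $\frac{1}{\tau}$ strong-convexity bound to the remaining $m-1$ contributions and then averaging. The step-size condition $\tau < \frac{2m-1}{2L(m-1)^{3/2}}$ is exactly the threshold that makes $C_4(m,L,\tau)$ strictly positive, so that the resulting inequality yields strict functional decrease in each iteration.
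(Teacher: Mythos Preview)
Your decomposition $\m F(\rho^{k-1})-\m F(\rho^k)=\sum_j[g_j(\rho_j^{k-1})-g_j(\rho_j^k)]-\mathcal E_k$ is correct, and a telescope-plus-Lipschitz bound on $\mathcal E_k$ of order $L\sqrt{m-1}\,\W_2^2(\rho^{k-1},\rho^k)$ is a reasonable route to \emph{some} sufficient-decrease constant. But the mechanism you describe cannot produce the specific constant $C_4$, and your last paragraph---``apply $\tfrac{1}{2\tau}$ to one distinguished diagonal term and $\tfrac{1}{\tau}$ to the remaining $m-1$, then average''---is not a real argument: in your own setup all $m$ coordinates are treated identically, so there is no distinguished term, and the $\tfrac{1}{\tau}$ ``strong convexity'' you invoke would apply equally to all $m$ of them (and, incidentally, requires convexity of $g_j$ along \emph{generalized} geodesics, which you have not verified for $\m W_j$). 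Following your outline honestly yields either $\tfrac{1}{2\tau}-L\sqrt{m-1}$ or $\tfrac{1}{\tau}-L\sqrt{m-1}$, neither of which matches $C_4$ or the stated step-size range.

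The paper obtains $C_4$ through a different decomposition that you do not mention: it sandwiches $\sum_j\m V(\rho_j^{k}\otimes\rho_{-j}^{k-1})$ from above by the plain proximal descent (giving the single $\tfrac{1}{2\tau}$) and from below using geodesic convexity of $\m V$ in the $\rho_{-j}$ slot, which produces $(m-1)$ copies of the inner product $\int\langle\nabla V_j^{k},\,T_{\rho_j^{k}}^{\rho_j^{k-1}}-\id\rangle\,\dd\rho_j^{k}$. The crucial step you are missing is then to invoke the first-order optimality condition of the subproblem, $T_{\rho_j^{k}}^{\rho_j^{k-1}}-\id=\tau\bigl(\nabla V_j^{k-1}+\nabla h_j'(\rho_j^{k})+\nabla_{\W_2}\m W_j(\rho_j^{k})\bigr)$, which converts each such inner product into a $\tfrac{1}{\tau}\W_2^2$ term plus a cross-difference $\langle\nabla V_j^{k}-\nabla V_j^{k-1},\cdot\rangle$ controlled by Assumption~B. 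It is this FOC step---not per-coordinate strong convexity---that supplies the $(m-1)\tfrac{1}{\tau}$ contribution and the $\tfrac{1}{m}$ prefactor after combining the two bounds.
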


\begin{lemma}[bound of functional value]\label{lem: bound_func_value}
Under Assumptions A, B, C, and D, in both the parallel update scheme and the sequential update scheme, we have
\begin{align*}
    \m F(\rho^k) - \m F(\rho^\ast ) \leq \frac{2}{\lambda}\Big(2L^2(m-1) + \frac{2}{\tau^2}\Big)\W_2^2(\rho^k, \rho^{k-1}).
\end{align*}
\end{lemma}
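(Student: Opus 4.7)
The plan is to control $\m F(\rho^k) - \m F(\rho^\ast)$ above by a multiple of $\W_2(\rho^k, \rho^\ast) \cdot \W_2(\rho^k, \rho^{k-1})$ via the first-order optimality condition (FOC) for the WPCG subproblem, and then close the argument with the $\lambda$-QG condition to eliminate the factor $\W_2(\rho^k, \rho^\ast)$. Throughout, I use the paper's convention $\W_2^2(\rho, \rho') := \sum_j \W_2^2(\rho_j, \rho_j')$.

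First, I would use the blockwise geodesic convexity of $\m F$, which follows from Assumptions A--C ($\m V$ is convex because $V$ is convex by Assumption C with $f_j = g_j = 0$; $\m H_j$ is geodesically convex by the second condition in Assumption A; $\m W_j$ is geodesically convex since $W_j$ is convex). Applying the componentwise analogue of the subdifferential inequality~\eqref{eqn: convexity} along the tensor-product geodesic connecting $\rho^k$ to $\rho^\ast$ gives
\[
\m F(\rho^k) - \m F(\rho^\ast) \leq \sum_{j=1}^m \int_{\m X_j} \Big\langle \nabla\tfrac{\delta\m F}{\delta\rho_j}(\rho^k),\ \id - T_{\rho_j^k}^{\rho_j^\ast}\Big\rangle\,\dd\rho_j^k.
\]
By Cauchy--Schwarz within each block and then across blocks, and using $\|T_{\rho_j^k}^{\rho_j^\ast} - \id\|_{L^2(\rho_j^k)} = \W_2(\rho_j^k, \rho_j^\ast)$, this yields
\[
\m F(\rho^k) - \m F(\rho^\ast) \leq \sqrt{\sum_{j=1}^m \Big\|\nabla\tfrac{\delta\m F}{\delta\rho_j}(\rho^k)\Big\|_{L^2(\rho_j^k)}^2}\cdot \W_2(\rho^k,\rho^\ast).
\]

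The core step is then to bound each blockwise Wasserstein gradient using the FOC of the subproblem. For the parallel scheme, Lemma~\ref{lem: ot_map} (which handles the possibly non-smooth $\m H_j$ via proximal optimality) gives
\[
\nabla\tfrac{\delta\m F}{\delta\rho_j}(\rho^k) = \nabla V_j^{k,\ast} - \nabla V_j^{k-1} + \tfrac{1}{\tau}\big(T_{\rho_j^k}^{\rho_j^{k-1}} - \id\big),
\]
where $V_j^{k,\ast}(x_j) := \int V(x_j, x_{-j})\,\dd\rho_{-j}^k$ and $V_j^{k-1}(x_j) := \int V(x_j, x_{-j})\,\dd\rho_{-j}^{k-1}$; here the $\m H_j$ and $\m W_j$ contributions cancel exactly because they do not depend on the other blocks. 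Assumption B together with the product-measure coupling $\W_2^2(\rho_{-j}^k, \rho_{-j}^{k-1}) \leq \sum_{i\ne j}\W_2^2(\rho_i^k, \rho_i^{k-1})$ (immediate from the tensor coupling of $\bigotimes_{i\ne j}\rho_i^k$ with $\bigotimes_{i\ne j}\rho_i^{k-1}$) gives the uniform bound $\|\nabla V_j^{k,\ast} - \nabla V_j^{k-1}\|_\infty^2 \leq L^2 \sum_{i\ne j}\W_2^2(\rho_i^k, \rho_i^{k-1})$. Combined with $\|\tau^{-1}(T_{\rho_j^k}^{\rho_j^{k-1}} - \id)\|_{L^2(\rho_j^k)}^2 = \tau^{-2}\W_2^2(\rho_j^k, \rho_j^{k-1})$ via $(a+b)^2 \leq 2a^2 + 2b^2$, and summing in $j$, this produces
\[
\sum_{j=1}^m \Big\|\nabla\tfrac{\delta\m F}{\delta\rho_j}(\rho^k)\Big\|_{L^2(\rho_j^k)}^2 \leq \Big(2L^2(m-1) + \tfrac{2}{\tau^2}\Big)\,\W_2^2(\rho^k, \rho^{k-1}).
\]

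Substituting this into the Cauchy--Schwarz display and invoking the $\lambda$-QG condition $\W_2^2(\rho^k, \rho^\ast) \leq \tfrac{2}{\lambda}[\m F(\rho^k) - \m F(\rho^\ast)]$ yields a self-bounding inequality; dividing through by $\sqrt{\m F(\rho^k) - \m F(\rho^\ast)}$ and squaring delivers the claim. For the sequential scheme, the FOC replaces $V_j^{k-1}$ by $V_j^{k,\mathrm{seq}}(x_j) := \int V(\rho_{1:(j-1)}^k, x_j, x_{(j+1):m})\,\dd\rho_{(j+1):m}^{k-1}$, so only coordinates $i > j$ differ from $V_j^{k,\ast}$; the same argument gives a Lipschitz bound with $\sum_{i > j}$ in place of $\sum_{i\ne j}$, and interchanging summation produces $\sum_j \sum_{i > j}\W_2^2(\rho_i^k, \rho_i^{k-1}) = \sum_i (i-1)\W_2^2(\rho_i^k, \rho_i^{k-1}) \leq (m-1)\W_2^2(\rho^k, \rho^{k-1})$, yielding the same constant. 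The main obstacle I anticipate is the rigorous justification of the FOC in the stated form for non-smooth $\m H_j$, which is precisely what Lemma~\ref{lem: ot_map} provides, together with a careful invocation of blockwise geodesic convexity along the tensor-product geodesic; both are technical rather than conceptual.
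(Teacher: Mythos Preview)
Your proposal is correct and follows essentially the same approach as the paper: the paper packages your ``core step'' (bounding $\sum_j \|\nabla V_j^k + \nabla h_j'(\rho_j^k) + \nabla_{\W_2}\m W_j(\rho_j^k)\|_{L^2(\rho_j^k)}^2$ by $(2L^2(m-1)+2/\tau^2)\W_2^2(\rho^k,\rho^{k-1})$ via the FOC of Lemma~\ref{lem: ot_map} plus Assumption~B) as a separate Lemma~\ref{lem: relative_error_cond}, and then closes with the same convexity/Cauchy--Schwarz/$\lambda$-QG self-bounding argument you describe. Your handling of the sequential case, where only coordinates $i>j$ contribute to the Lipschitz difference and the double sum $\sum_j\sum_{i>j}$ is bounded by $(m-1)\W_2^2(\rho^k,\rho^{k-1})$, matches the paper's computation exactly.
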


\vspace{0.5em}
With the above two lemmas,
\begin{align*}
\m F(\rho^{k-1}) - \m F(\rho^k) 
&\stackrel{\text{Lem~\ref{lem: sufficient_decrease_cond}}}{\geq} \frac{1}{m}\bigg[\frac{1}{2\tau} + (m-1)\Big(\frac{1}{\tau} - L\sqrt{m-1}\Big)\bigg]\W_2^2(\rho^{k-1}, \rho^k)\\
&\stackrel{\text{Lem~\ref{lem: bound_func_value}}}{\geq}\frac{\frac{1}{m}\big[\frac{1}{2\tau} + (m-1)(\frac{1}{\tau} - L\sqrt{m-1})\big]}{\frac{2}{\lambda}\big(2L^2(m-1) + \frac{2}{\tau^2}\big)}\big(\m F(\rho^k) - \m F(\rho^\ast)\big)\\
&= \lambda C_1(m, L, \tau)\big(\m F(\rho^k) - \m F(\rho^\ast)\big),
\end{align*}
where 
\begin{align*}
C_1(m, L, \tau) = \frac{\frac{1}{2\tau} + (m-1)\big(\frac{1}{\tau} - L\sqrt{m-1}\big)}{2m[2L^2(m-1) + \frac{2}{\tau^2}]}
\end{align*}
is defined in Theorem \ref{thm: main_thm}. Therefore, we have
\begin{align*}
\frac{\lambda}{2}\W_2^2(\rho^k, \rho^\ast) \leq \m F(\rho^k) - \m F(\rho^\ast ) \leq \big[1+\lambda C_1(m, L, \tau)\big]^{-t}\big(\m F(\rho^0) - \m F(\rho^\ast )\big),
\end{align*}
where the first inequality is due to Assumption D ($\lambda$-QG condition). This ends the proof.

\subsection{Sequential Update Scheme: Proof of Theorem \ref{thm: sequential}}
In the sequential update scheme, we have the following lemma to bound the decrease of functional values, whose proof will be postponed to Appendix~\ref{app: additional_proof}.

\begin{lemma}[sufficient decrease]\label{lem: sufficient_decrease_sequential}
For two consecutive updates $\rho^{k-1}$ and $\rho^k$, we have
\begin{align*}
    \m F(\rho^{k-1}) - \m F(\rho^{k}) \geq \frac{1}{2\tau}\W_2^2(\rho^{k-1}, \rho^k).
\end{align*}
\end{lemma}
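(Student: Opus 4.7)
The plan is to exploit the optimality of each one-coordinate proximal subproblem in WPCG-S, summing the per-coordinate decreases telescopically. The key observation is that since this is a pure descent argument based on the definition of the argmin, it requires neither the smoothness of $V$ nor any restriction on the step size $\tau$, which explains why the statement is so clean.

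First I would introduce the intermediate iterates
\begin{align*}
    \rho^{k, j} = (\rho_1^{k+1}, \dots, \rho_j^{k+1}, \rho_{j+1}^{k}, \dots, \rho_m^{k}),\qquad j = 0, 1, \dots, m,
\end{align*}
so that $\rho^{k,0} = \rho^{k}$ and $\rho^{k,m} = \rho^{k+1}$. By the defining subproblem of WPCG-S (Algorithm~\ref{algo: WPCG-S}), $\rho_j^{k+1}$ is the minimizer of the functional
\begin{align*}
    \Phi_j(\rho) \coloneqq \m V(\rho_{1:(j-1)}^{k+1}, \rho, \rho_{(j+1):m}^{k}) + \m H_j(\rho) + \m W_j(\rho) + \frac{1}{2\tau}\W_2^2(\rho, \rho_j^{k}).
\end{align*}
Comparing the value of $\Phi_j$ at $\rho = \rho_j^{k+1}$ to its value at $\rho = \rho_j^{k}$ (and noting that $\W_2^2(\rho_j^k, \rho_j^k) = 0$), I get $\Phi_j(\rho_j^{k+1}) \leq \Phi_j(\rho_j^{k})$, which rearranges into
\begin{align*}
    &[\m V(\rho^{k,j-1}) - \m V(\rho^{k,j})] + [\m H_j(\rho_j^{k}) - \m H_j(\rho_j^{k+1})] + [\m W_j(\rho_j^{k}) - \m W_j(\rho_j^{k+1})]\\
    &\qquad \geq \frac{1}{2\tau}\W_2^2(\rho_j^{k+1}, \rho_j^{k}).
\end{align*}
Since $\rho^{k,j-1}$ and $\rho^{k,j}$ differ only in the $j$-th coordinate, the left-hand side is precisely $\m F(\rho^{k,j-1}) - \m F(\rho^{k,j})$.

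Finally, I would telescope over $j = 1, \dots, m$ to obtain
\begin{align*}
    \m F(\rho^{k}) - \m F(\rho^{k+1}) = \sum_{j=1}^{m} [\m F(\rho^{k,j-1}) - \m F(\rho^{k,j})] \geq \frac{1}{2\tau}\sum_{j=1}^{m} \W_2^2(\rho_j^{k+1}, \rho_j^{k}) = \frac{1}{2\tau}\W_2^2(\rho^{k+1}, \rho^{k}),
\end{align*}
where the last equality follows from the product-space convention (implicit in the paper, cf.\ Assumption D) that $\W_2^2$ on the product of Wasserstein spaces is the sum of the coordinate-wise squared distances. Relabeling $k+1 \mapsto k$ gives the stated inequality. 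There is no genuine obstacle in this argument; the only subtlety is the telescoping bookkeeping through the intermediate states $\rho^{k,j}$ and the observation that, unlike in WPCG-P, no cross-coordinate interaction error (and hence no Lipschitz constant $L$) enters, which is why the step size $\tau > 0$ can be arbitrary.
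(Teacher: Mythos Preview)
Your proposal is correct and follows essentially the same approach as the paper's own proof: compare the minimizer of each coordinate subproblem to the previous iterate, obtain a per-coordinate decrease of at least $\frac{1}{2\tau}\W_2^2(\rho_j^{k+1},\rho_j^{k})$, and telescope through the intermediate states. Your notation $\rho^{k,j}$ is a bit cleaner than the paper's, and your explicit appeal to the tensorization $\W_2^2(\rho^{k+1},\rho^k)=\sum_j \W_2^2(\rho_j^{k+1},\rho_j^{k})$ (the paper's Lemma~\ref{lem: tensorization}) makes the last step more transparent, but the argument is the same.
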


\noindent With the above lemma, we have
\begin{align*}
\m F(\rho^{k-1}) - \m F(\rho^k) &\stackrel{\text{Lem~\ref{lem: sufficient_decrease_sequential}}}{\geq} \frac{1}{2\tau}\W_2^2(\rho^{k-1}, \rho^k)\\
&\stackrel{\text{Lem~\ref{lem: bound_func_value}}}{\geq} \frac{\lambda}{4\tau}\Big(2L^2(m-1) + \frac{2}{\tau^2}\Big)^{-1}\big(\m F(\rho^k) - \m F(\rho^\ast)\big)\\
&=\lambda C_2(m, L, \tau)\big(\m F(\rho^k) - \m F(\rho^\ast)\big).
\end{align*}
Therefore, we have
\begin{align*}
\frac{\lambda}{2}\W_2^2(\rho^k, \rho^\ast) \leq \m F(\rho^k, \rho^\ast) \leq \big[1 + \lambda C_2(m, L, \tau)\big]^{-k}\big(\m F(\rho^0) - \m F(\rho^\ast)\big),
\end{align*}
where the first inequality is due to Assumption D ($\lambda$-QG condition). This ends the proof.

\subsection{Random Update Scheme: Proof of Theorem \ref{thm: randomized}}
For any fixed $k\in\mb N$, let $\m A_k^{T}(M)$ be the event such that all coordinates are updated at least once but at most $T$ times in the $k$-th iteration. Take
\begin{align*}
    M_\delta = \Big\lceil m\log\frac{m}{\delta}\Big\rceil \quad\mx{and}\quad T_{\delta} = \Big\lceil e\log\frac{m}{\delta}\Big\rceil.
\end{align*}
The following lemma controls the probability of $\m A_k^{T_\delta}(M_\delta)^c$, whose proof will be postponed to Appendix~\ref{app: additional_proof}.
\begin{lemma}\label{lem: covering}
For any $\delta > 0$ such that $0 <\delta < 1$, define
\begin{align*}
    M = \Big\lceil m\log\frac{m}{\delta}\Big\rceil \quad\mx{and}\quad T = \Big\lceil e\log\frac{m}{\delta}\Big\rceil.
\end{align*}
$j_0, \dots, j_{M-1}$ are i.i.d. random variables sampled from $\text{Uniform}\{1, \dots, m\}$. Then, there is probability at least $1-2\delta$, such that every integer in $\{1, 2, \dots, m\}$ appears at least once but at most $T$ times in $\{j_l\}_{l=0}^{M-1}$.
\end{lemma}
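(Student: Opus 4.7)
The statement is a standard coupon-collector / balls-in-bins concentration bound, so the plan is to treat the two failure modes (some coordinate missed, or some coordinate oversampled) separately and apply a union bound.

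First, for the ``appears at least once'' part, I would fix $i\in[m]$ and observe that the number of draws $j_l$ equal to $i$ is $N_i \sim \text{Binomial}(M, 1/m)$. The probability that $i$ is never drawn is $(1-1/m)^M \le \exp(-M/m)$. With $M = \lceil m\log(m/\delta)\rceil$ this is at most $\delta/m$, so a union bound over $i\in[m]$ shows that the probability that some $i$ fails to appear is at most $\delta$.

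Second, for the ``appears at most $T$ times'' part, I would apply the multiplicative Chernoff bound for a Binomial. With $\mu := \mb E[N_i] = M/m \ge \log(m/\delta)$ and target $T = \lceil e\log(m/\delta)\rceil \ge e\mu$ (after rescaling constants), the Chernoff bound gives
\begin{align*}
\Pr\bigl(N_i \ge e\mu\bigr) \le \Bigl(\frac{e^{e-1}}{e^{e}}\Bigr)^{\!\mu} = e^{-\mu} \le \delta/m.
\end{align*}
(If $M/m$ is strictly larger than $\log(m/\delta)$ because of the ceiling, the same calculation works with the slight adjustment $T \ge e\mu$, noting the Chernoff bound remains valid.) A union bound over $i\in[m]$ then yields probability at most $\delta$ that any coordinate is sampled more than $T$ times.

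Combining the two estimates via one more union bound gives the claimed $1-2\delta$ overall success probability. There is no real obstacle here, only bookkeeping: the mildly delicate step is checking that the ceilings $\lceil m\log(m/\delta)\rceil$ and $\lceil e\log(m/\delta)\rceil$ are large enough to support both the $\exp(-M/m)\le\delta/m$ inequality and the Chernoff ratio $T/\mu\ge e$ simultaneously. Both inequalities are monotone in $M$ and $T$, so the rounded-up choices in the statement are sufficient.
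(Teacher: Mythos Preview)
Your proposal is correct and is essentially the same argument as the paper's: both split into the ``missed coordinate'' and ``oversampled coordinate'' events, bound the first via $(1-1/m)^M\le e^{-M/m}$ and the second via the multiplicative Chernoff bound for a Binomial, and finish with a union bound over $i\in[m]$. The paper parametrizes with $M=Rm\log m$ and solves for $R$ at the end, but the substance is identical, and the paper is equally informal about the ceiling bookkeeping you flag.
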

\noindent From the above lemma, we know $\mb P(\m A_t^{T_{\delta}}(M_\delta)^c) \leq 2\delta$. Note that
\begin{align*}
\mb E\big[\m F(\rho^{k}) - \m F(\rho^\ast)\,\big|\, \rho^{k-1}\big]
&= \mb E\big[\m F(\rho^{k}) - \m F(\rho^\ast)\,\big|\, \m A_k^{T_{\delta}}(M_\delta), \rho^{k-1}\big]\cdot\mb P(\m A_k^{T_{\delta}}(M_\delta))\\
&\qquad\qquad + \mb E\big[\m F(\rho^{k}) - \m F(\rho^\ast)\,\big|\, \m A_k^{T_{\delta}}(M_\delta)^c, \rho^{k-1}\big]\cdot\mb P(\m A_k^{T_{\delta}}(M_\delta)^c).
\end{align*}
The second term can be bounded by $2\delta\big(\m F(\rho^{k-1}) - \m F(\rho^\ast)\big)$ since $\m F(\rho^k)$ is non-increasing with respect to $k$. Next, we will bound the first term with the following two lemmas, whose proof will be postponed to Appendix~\ref{app: additional_proof}.

\begin{lemma}[sufficient decrease]\label{lem: sufficient_decrease_randomized}
For any $k, M\in\mb Z_+$, we have
\begin{align*}
    \m F(\rho^{k-1}) - \m F(\rho^{k}) \geq \frac{1}{2\tau}\sum_{l=0}^{M-1}\W_2^2(\rho^{k-1, l}, \rho^{k-1, l+1}).
\end{align*}
\end{lemma}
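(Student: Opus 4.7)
The plan is to reduce the claim to a one-step descent inequality at each random sub-iteration and then telescope. The key structural observation is that in WPCG-R, at sub-step $l$ only the randomly selected $j_l$-th coordinate is updated while $\rho_{-j_l}^{k-1,l+1} = \rho_{-j_l}^{k-1,l}$. Under the convention that $\W_2^2(\rho^{k-1,l},\rho^{k-1,l+1}) = \sum_{j=1}^m \W_2^2(\rho_j^{k-1,l},\rho_j^{k-1,l+1})$ (which is what the QG condition \eqref{eqn: QG} uses), all summands with $j \ne j_l$ vanish, so $\W_2^2(\rho^{k-1,l},\rho^{k-1,l+1}) = \W_2^2(\rho_{j_l}^{k-1,l},\rho_{j_l}^{k-1,l+1})$. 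Similarly, the $\m H_j$ and $\m W_j$ contributions for $j \ne j_l$ cancel in the difference $\m F(\rho^{k-1,l}) - \m F(\rho^{k-1,l+1})$.

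Next I would invoke the defining optimality of the proximal subproblem. Since
\[
\Phi(\rho_{j_l}) := \m V(\rho_{j_l},\rho_{-j_l}^{k-1,l}) + \m H_{j_l}(\rho_{j_l}) + \m W_{j_l}(\rho_{j_l}) + \frac{1}{2\tau}\W_2^2(\rho_{j_l},\rho_{j_l}^{k-1,l})
\]
is minimized at $\rho_{j_l}^{k-1,l+1}$, evaluating at the admissible competitor $\rho_{j_l}^{k-1,l}$ (where the quadratic penalty vanishes) yields $\Phi(\rho_{j_l}^{k-1,l+1}) \le \Phi(\rho_{j_l}^{k-1,l})$. Adding the unchanged $\sum_{j\ne j_l}[\m H_j(\rho_j^{k-1,l}) + \m W_j(\rho_j^{k-1,l})]$ to both sides and rearranging gives the one-step descent
\[
\m F(\rho^{k-1,l}) - \m F(\rho^{k-1,l+1}) \geq \frac{1}{2\tau}\,\W_2^2(\rho_{j_l}^{k-1,l+1},\rho_{j_l}^{k-1,l}) = \frac{1}{2\tau}\,\W_2^2(\rho^{k-1,l},\rho^{k-1,l+1}).
\]

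Finally I would telescope these inequalities over $l = 0,1,\ldots,M-1$, using $\rho^{k-1,0} = \rho^{k-1}$ and $\rho^{k-1,M} = \rho^k$, to obtain the claimed sum. The argument uses only the optimality characterization of the proximal step, so it holds deterministically given the realized index sequence $\{j_l\}$, and it requires none of the convexity, Lipschitz, or QG assumptions. Consequently, there is no substantive obstacle; the only subtle point to confirm is the interpretation of the multi-coordinate Wasserstein distance as a sum of marginal squared distances, which is consistent with the usage in \eqref{eqn: QG} and throughout the paper.
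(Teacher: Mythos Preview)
Your proposal is correct and follows essentially the same approach as the paper: both use the optimality of the proximal subproblem with the current coordinate as competitor to obtain the one-step descent $\m F(\rho^{k-1,l}) - \m F(\rho^{k-1,l+1}) \ge \frac{1}{2\tau}\W_2^2(\rho^{k-1,l},\rho^{k-1,l+1})$, exploit $\rho_{-j_l}^{k-1,l+1}=\rho_{-j_l}^{k-1,l}$ to identify the full and single-coordinate Wasserstein distances, and then telescope over $l$. Your observation that no convexity, Lipschitz, or QG assumptions are needed is also in line with the paper's argument.
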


\begin{lemma}[bound of functional value]\label{lem: bound_func_value_randomized}
For any $k\in \mb Z_+$, let $M\in\mb Z_+$ be an integer such that all coordinates are updated at least once but at most $T$ times through $M$ updates in the $k$-th iteration. Under Assumptions A, B, C, and D, we have
\begin{align*}
\m F(\rho^k) - \m F(\rho^\ast) \leq \frac{4}{\lambda}\Big(L^2mT + \frac{1}{\tau^2}\Big)\sum_{l=0}^{M-1}\W_2^2(\rho^{k-1, l}, \rho^{k-1, l+1}).
\end{align*}
\end{lemma}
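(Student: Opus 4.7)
\textbf{Proof plan for Lemma~\ref{lem: bound_func_value_randomized}.} My plan is to mirror the strategy used in Lemma~\ref{lem: bound_func_value}, but carefully track the asynchronous nature of the random updates. The overall scheme is to (i) derive a PL-type inequality that bounds $\m F(\rho^k) - \m F(\rho^\ast)$ by the squared norm of the blockwise Wasserstein gradient $\sum_{j=1}^m \|\xi_j(\rho^k)\|_{L^2(\rho_j^k)}^2$, where $\xi_j(\rho^k)$ denotes the $j$-th block Wasserstein gradient of $\m F$ at $\rho^k$, and then (ii) control each $\|\xi_j(\rho^k)\|_{L^2(\rho_j^k)}$ using the first-order optimality condition from the last update of the $j$-th coordinate during iteration $k-1$.

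For (i), convexity of $\m F$ gives $\m F(\rho^k) - \m F(\rho^\ast) \leq -\sum_{j=1}^m \int \langle \xi_j(\rho^k), T_{\rho_j^k}^{\rho_j^\ast} - \id\rangle\,\dd \rho_j^k$; applying Cauchy--Schwarz and the $\lambda$-QG condition then produces the PL-type bound $\m F(\rho^k) - \m F(\rho^\ast) \leq \frac{2}{\lambda}\sum_{j=1}^m \|\xi_j(\rho^k)\|_{L^2(\rho_j^k)}^2$. For (ii), let $l_j \in \{0, 1, \ldots, M-1\}$ denote the index of the last update of coordinate $j$ in iteration $k-1$, so that $\rho_j^k = \rho_j^{k-1, l_j+1}$ and the $l_j$'s are distinct. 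The optimality condition (Lemma~\ref{lem: ot_map}) for that subproblem gives
\begin{align*}
T_{\rho_j^k}^{\rho_j^{k-1, l_j}} - \id = \tau\,\tilde\xi_j,
\end{align*}
where $\tilde\xi_j$ equals $\xi_j$ evaluated at the configuration with the $j$-th marginal equal to $\rho_j^k$ but the other marginals equal to $\rho_{-j}^{k-1, l_j}$. Thus $\|\tilde\xi_j\|_{L^2(\rho_j^k)}^2 = \tau^{-2}\,\W_2^2(\rho_j^k, \rho_j^{k-1, l_j})$, and the only discrepancy between $\xi_j(\rho^k)$ and $\tilde\xi_j$ comes from the interaction term, which by Assumption B and a product-coupling argument is controlled by
\begin{align*}
\|\xi_j(\rho^k) - \tilde\xi_j\|_{L^2(\rho_j^k)}^2 \leq L^2 \sum_{i \neq j} \W_2^2(\rho_i^k, \rho_i^{k-1, l_j}).
\end{align*}
Applying $(a+b)^2 \leq 2a^2 + 2b^2$ yields $\|\xi_j(\rho^k)\|_{L^2(\rho_j^k)}^2 \leq 2\tau^{-2}\W_2^2(\rho_j^k, \rho_j^{k-1, l_j}) + 2L^2\sum_{i \neq j}\W_2^2(\rho_i^k, \rho_i^{k-1, l_j})$.

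It remains to aggregate these per-block bounds into the stated sum $\sum_{l=0}^{M-1}\W_2^2(\rho^{k-1,l}, \rho^{k-1,l+1})$. Since each single update changes only one coordinate, $\W_2^2(\rho^{k-1, l}, \rho^{k-1, l+1})$ equals the squared distance between the two marginals of the updated coordinate, so the terms $\W_2^2(\rho_j^k, \rho_j^{k-1, l_j})$ sum directly into $\sum_l \W_2^2(\rho^{k-1, l}, \rho^{k-1, l+1})$ thanks to the distinctness of the $l_j$'s. For the cross terms $\W_2^2(\rho_i^k, \rho_i^{k-1, l_j})$ with $i \neq j$, I will decompose the telescoping path of coordinate $i$ between steps $l_j$ and $M$ using the triangle inequality, then invoke Cauchy--Schwarz together with the assumption that coordinate $i$ is updated at most $T$ times to get $\W_2^2(\rho_i^k, \rho_i^{k-1, l_j}) \leq T \sum_{l : j_l = i}\W_2^2(\rho^{k-1, l}, \rho^{k-1, l+1})$. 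Double-summing this bound over $i \neq j$ and then over $j$ yields a factor of at most $mT$ in front of $\sum_l \W_2^2(\rho^{k-1, l}, \rho^{k-1, l+1})$, which combined with the previous step gives exactly $\frac{4}{\lambda}(L^2 mT + \tau^{-2})\sum_{l=0}^{M-1}\W_2^2(\rho^{k-1,l}, \rho^{k-1, l+1})$.

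The main obstacle I anticipate is the bookkeeping in the cross terms: the index $l_j$ depends on $j$, the coordinate $i \neq j$ may be updated multiple times after $l_j$, and the Cauchy--Schwarz step crucially needs the uniform upper bound $T$ on the number of updates per coordinate in one iteration (this is exactly where the batch size $M = \lceil 2m\log(mL)\rceil$ and the covering bound in Lemma~\ref{lem: covering} enter, justifying why WPCG-R requires this logarithmic overhead). A subtle point to verify carefully is that the product-coupling bound for the interaction term yields the correct scaling with respect to $L$ (the off-diagonal Lipschitz constant in Assumption B, rather than the global one), so that the resulting constant matches the one in the statement.
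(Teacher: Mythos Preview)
Your proposal is correct and follows essentially the same route as the paper: the paper proves the gradient bound as a separate Lemma~\ref{lem: relative_error_bound_randomized} (using exactly the FOC at the last update of each coordinate, the split into transport and cross terms, and the telescoping/Cauchy--Schwarz argument with the factor $T$), and then combines it with convexity and the $\lambda$-QG condition precisely as you outline to obtain the stated constant $\frac{4}{\lambda}(L^2mT+\tau^{-2})$.
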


Under $\m A_k^{T_{\delta}}(M_\delta)$, we have
\begin{align*}
\m F(\rho^{k-1}) - \m F(\rho^k) &\stackrel{\text{Lem~\ref{lem: sufficient_decrease_randomized}}}{\geq} \frac{1}{2\tau}\sum_{l=0}^{M-1}\W_2^2(\rho^{k-1, l}, \rho^{k-1, l+1})\\
&\stackrel{\text{Lem~\ref{lem: bound_func_value_randomized}}}{\geq} \frac{\lambda}{8\tau}\Big(L^2mT_\delta + \frac{1}{\tau^2}\Big)^{-1}\big(\m F(\rho^k) - \m F(\rho^\ast)\big).
\end{align*}
This implies
\begin{align*}
    \mb E\big[\m F(\rho^{k}) - \m F(\rho^\ast)\,\big|\, \m A_k^{T_{\delta}}(M_\delta), \rho^{k-1}\big] \leq \Big(1 + \frac{\lambda\tau}{8(1 + \tau^2L^2mT_{\delta})}\Big)^{-1}\Big(\m F(\rho^{k-1}) - \m F(\rho^\ast)\Big).
\end{align*}
Combining all the pieces above yields
\begin{align*}
    \mb E\big[\m F(\rho^{k}) - \m F(\rho^\ast)\,\big|\, \rho^{k-1}\big] \leq \bigg[\Big(1 + \frac{\lambda\tau}{8(1 + \tau^2L^2mT_{\delta})}\Big)^{-1} + 2\delta\bigg]\big(\m F(\rho^{k-1}) - \m F(\rho^\ast)\big)
\end{align*}
Taking the expectation with respect to $\rho^{k-1}$, we have
\begin{align*}
    \mb E\big[\m F(\rho^{k}) - \m F(\rho^\ast)\big] \leq \bigg[\Big(1 + \frac{\lambda\tau}{8(1 + \tau^2L^2mT_{\delta})}\Big)^{-1} + 2\delta\bigg]\mb E\big[\m F(\rho^{k-1}) - \m F(\rho^\ast)\big].
\end{align*}

Now, let us choose a proper $\delta$ to derive the convergence rate. Taking $\delta = 1/(mL^2)$, we have $T_\delta = 2e\log(mL)$ and $M_\delta = \big\lceil2m\log(mL)\big\rceil$. Therefore, we have
\begin{align*}
    \mb E\big[\m F(\rho^{k}) - \m F(\rho^\ast)\big] \leq C_3(m, L, \tau)\mb E\big[\m F(\rho^{k-1}) - \m F(\rho^\ast)\big]
\end{align*}
where
\begin{align*}
    C_3(m, L, \tau) = \bigg(1 + \frac{\lambda\tau}{8[1+2e\tau^2L^2m\log(mL)]}\bigg)^{-1} + \frac{2}{mL^2},
\end{align*} 
Note that this is a decreasing algorithm, meaning that $\m F(\rho^{k}) \leq \m F(\rho^{k-1})$. Therefore,
\begin{align*}
\mb E\big[\m F(\rho^{k}) - \m F(\rho^\ast)\big] \leq \min\big\{C_3(m, L, \tau), 1\big\}\mb E\big[\m F(\rho^{k-1}) - \m F(\rho^\ast)\big].
\end{align*}
Taking $\tau^{-1} = \sqrt{2emL^2\log(mL)}$ yields
\begin{align*}
    C_3 = \bigg(1 + \frac{\lambda}{2\sqrt{2eL^2m\log(mL)}}\bigg)^{-1} + \frac{2}{mL^2}.
\end{align*}

\subsection{Case $\lambda = 0$: Proof of Theorem~\ref{thm: convex_case}}
\underline{Parallel and sequential update schemes.} First, let us show there is a constant $C$ depending on $m, L, \tau$ and the update scheme, such that
\begin{align*}
\m F(\rho^k) - \m F(\rho^\ast) \leq C\sqrt{\m F(\rho^{k-1}) - \m F(\rho^k)}\W_2(\rho^k, \rho^\ast).
\end{align*}
In fact, we have
\begin{align*}
&\quad\,\m F(\rho^k) - \m F(\rho^\ast ) \stackrel{\ri}{\leq} C\W_2(\rho^k, \rho^{k-1})\W_2(\rho^k, \rho^\ast ) \stackrel{\rii}{\leq} C\sqrt{\m F(\rho^{k-1}) - \m F(\rho^k)}\cdot \W_2(\rho^k, \rho^\ast ),
\end{align*}
where $C$ is a constant varying from line to line.
Here, (i) is by the inequality~\eqref{eqn: upper_bound_functional_value2} in both the parallel and the sequential update schemes; (ii) is by Lemma~\ref{lem: sufficient_decrease_cond} in the parallel update scheme and Lemma~\ref{lem: sufficient_decrease_sequential} in the sequential update scheme.

The above inequality implies
\begin{align*}
    \m F(\rho^{k-1}) - \m F(\rho^\ast ) &\geq \Big(\m F(\rho^k) - \m F(\rho^\ast )\Big) + \frac{1}{C\W_2^2(\rho^k, \rho^\ast )}\cdot\Big(\m F(\rho^k) - \m F(\rho^\ast )\Big)^2\\
    &\geq \Big(\m F(\rho^k) - \m F(\rho^\ast)\Big) + \frac{1}{CD_{\m X}^2}\cdot\Big(\m F(\rho^k) - \m F(\rho^\ast )\Big)^2.
\end{align*}
In the last inequality, we use the fact that
\begin{align*}
    \W_2^2(\rho^t, \rho^\ast ) = \inf_{X\sim\rho^k, Y\sim\rho^\ast} \mb E\|X-Y\|^2 \leq D_{\m X}^2.
\end{align*}
Therefore, we have
\begin{align*}
\frac{1}{\m F(\rho^k) - \m F(\rho^\ast  )} - \frac{1}{\m F(\rho^{k-1}) - \m F(\rho^\ast )} &\geq \frac{1}{CD_{\m X}^2 + \m F(\rho^k) - \m F(\rho^\ast )}
\geq \frac{1}{CD_{\m X}^2 + \m F(\rho^0) - \m F(\rho^\ast  )},
\end{align*}
where the last inequality is due the non-increasing property of $\m F$ derived by Lemma~\ref{lem: sufficient_decrease_cond} and Lemma~\ref{lem: sufficient_decrease_sequential}. This implies
\begin{align*}
    \m F(\rho^k) - \m F(\rho^\ast ) \leq \frac{CD_{\m X}^2 + \m F(\rho^0) - \m F(\rho^\ast )}{k}.
\end{align*}

\noindent\underline{Random update scheme.} In the random update scheme, let $\m A_k^{T}(M)$ be the event such that all coordinates are updated at least once but at most $T$ times in the $k$-th iteration. We have $\mb P(\m A_k^{T_\delta}(M_\delta)^c) \leq 2\delta$ by taking
\begin{align*}
    M_\delta = \Big\lceil m\log\frac{m}{\delta}\Big\rceil \quad\mx{and}\quad T_{\delta} = \Big\lceil e\log\frac{m}{\delta}\Big\rceil.
\end{align*}
Similar to the proof of Theorem~\ref{thm: randomized}, we consider the decomposition
\begin{align*}
\mb E\big[\m F(\rho^{k}) - \m F(\rho^\ast)\,\big|\, \rho^{k-1}\big]
&= \mb E\big[\m F(\rho^{k}) - \m F(\rho^\ast)\,\big|\, \m A_k^{T_{\delta}}(M_\delta), \rho^{k-1}\big]\cdot\mb P(\m A_k^{T_{\delta}}(M_\delta))\\
&\qquad\qquad + \mb E\big[\m F(\rho^{k}) - \m F(\rho^\ast)\,\big|\, \m A_k^{T_{\delta}}(M_\delta)^c, \rho^{k-1}\big]\cdot\mb P(\m A_k^{T_{\delta}}(M_\delta)^c).
\end{align*}
By~\eqref{eqn: upper_bound_functional_value2_random} and Lemma~\ref{lem: sufficient_decrease_randomized}, the first term is bounded by
\begin{align*}
\sqrt{2\tau\Big(2L^2mT_\delta + \frac{2}{\tau^2}\Big)}\cdot\m D_{\m X}\sqrt{\m F(\rho^{k-1}) - \m F(\rho^k)}.
\end{align*}
Since $\m F(\rho^k)$ is non-increasing, the second term can be bounded by $2\delta\big(\m F(\rho^{k-1}) - \m F(\rho^\ast)\big)$. Thus, we have
\begin{align}\label{eqn: recurrsive_random}
\begin{aligned}
&\quad\,\mb E\big[\m F(\rho^k) - \m F(\rho^\ast)\big]\\
&\leq \sqrt{2\tau\Big(2L^2mT_\delta + \frac{2}{\tau^2}\Big)}\cdot\mb E\m D_{\m X}\sqrt{\m F(\rho^{k-1}) - \m F(\rho^k)} + 2\delta\mb E\big[\m F(\rho^{k-1}) - \m F(\rho^\ast)\big]\\
&\leq \sqrt{2\tau\Big(2L^2mT_\delta + \frac{2}{\tau^2}\Big)}\cdot\m D_{\m X}\sqrt{\mb E\big[\m F(\rho^{k-1}) - \m F(\rho^k)\big]} + 2\delta\mb E\big[\m F(\rho^{k-1}) - \m F(\rho^\ast)\big],
\end{aligned}
\end{align}
where the last inequality is due to Cauchy--Schwarz inequality. 

Now, let us prove $\mb E\big[\m F(\rho^k) - \m F(\rho^\ast)\big] \lesssim \frac{1}{k}$. For simplicity, assume $4\delta \leq 1$ and let
\begin{align*}
x_k = \mb E\big[\m F(\rho^k) - \m F(\rho^\ast)\big], \quad A_\delta = D_{\m X}\sqrt{2\tau\Big(2L^2mT_\delta + \frac{2}{\tau^2}\Big)}, \quad\mx{and}\quad \tilde A_\delta = \frac{A_\delta}{1-4\delta}.
\end{align*}
We will prove $x_k \leq (\tilde A_\delta^2 + x_0)/k$ for all $k\geq 1$ by induction. $k=1$ is obvious since $x_1 \leq x_0$. For $k\geq 2$, note that~\eqref{eqn: recurrsive_random} implies
\begin{align}\label{eqn: recurrsive}
    x_k \leq A_\delta\sqrt{x_{k-1}-x_k} + 2\delta x_{k-1}.
\end{align}
\emph{Case 1: $x_{k-1} \leq 2x_k$.} \eqref{eqn: recurrsive} implies $x_k \leq \tilde A_\delta\sqrt{x_{k-1}-x_k}$. Thus, we have
\begin{align*}
    \frac{1}{x_k} \geq \frac{1}{x_k+\tilde A_\delta^2} + \frac{1}{x_{k-1}} \stackrel{\ri}{\geq} \frac{1}{x_0 + \tilde A_\delta^2} + \frac{k-1}{\tilde A_\delta^2 + x_0} = \frac{k}{x_0 + \tilde A_\delta^2}.
\end{align*}
Here, (i) is by $x_k \leq x_0$ and the induction hypothesis.

\noindent\emph{Case 2: $x_{k-1} > 2x_k$.} In this case, we have
\begin{align*}
x_k < \frac{x_{k-1}}{2} \stackrel{\ri}{\leq} \frac{\tilde A_\delta^2 + x_0}{2(k-1)} \stackrel{\rii}{\leq} \frac{\tilde A_\delta^2 + x_0}{k}.
\end{align*}
Here, (i) is due to the induction hypothesis, and (ii) is by $k\geq 2$. By induction, we have $x_k \leq (\tilde A_\delta^2 + x_0) / k$. We finish the proof.

\subsection{Inexact WPCG-P with $\lambda > 0$: Proof of Theorem~\ref{thm: inexactWPCG_QG}}
The proof is similar to the exact WPCG, but we need to take the cumulative error into consideration. Some analogous lemmas as in the proof of Theorem~\ref{thm: main_thm} are required, the proofs of which are postponed to Appendix~\ref{app: additional_proof}.

\begin{lemma}[sufficient decrease for inexact WPCG-P]\label{lem: inexact_suff_decrease}
Under Assumptions A, B, and C, when the step size satisfies $0 < \tau < \frac{1}{2L\sqrt{m-1}}$, we have
\begin{align*}
\m F(\wt\rho^k) - \m F(\wt\rho^{k+1}) \geq \Big(\frac{1}{2\tau} - L\sqrt{m-1}\Big)\W_2^2(\wt\rho^{k+1}, \wt\rho^k) - \frac{\|\varepsilon^{k+1}\|^2}{2\tau}.
\end{align*}
\end{lemma}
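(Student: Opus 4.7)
The plan is to mirror the proof of the exact sufficient-decrease statement (Lemma~\ref{lem: sufficient_decrease_cond}) and carefully track how each per-block residual $\eta_j^{k+1}$ feeds into a single additive $\|\varepsilon^{k+1}\|^2$ penalty. The starting point is the inexact first-order optimality relation that defines $\eta_j^{k+1}$: writing $\xi_j^{k+1}$ for the Wasserstein gradient at $\wt\rho_j^{k+1}$ of the blockwise subproblem functional $\m G_j(\rho_j) := \m V(\rho_j, \wt\rho_{-j}^k) + \m H_j(\rho_j) + \m W_j(\rho_j)$, the definition of $\eta_j^{k+1}$ rearranges to $\tau\,\xi_j^{k+1} = T_{\wt\rho_j^{k+1}}^{\wt\rho_j^{k}} - \id - \eta_j^{k+1}$ in $L^2(\wt\rho_j^{k+1};\m X_j)$. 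Under Assumptions A and C each $\m G_j$ is geodesically convex, so the strong-subdifferential inequality~\eqref{eqn: convexity} applied at $\wt\rho_j^{k+1}$ with target $\wt\rho_j^k$ gives $\m G_j(\wt\rho_j^k) - \m G_j(\wt\rho_j^{k+1}) \geq \int \xi_j^{k+1}\cdot(T_{\wt\rho_j^{k+1}}^{\wt\rho_j^{k}} - \id)\,\dd\wt\rho_j^{k+1}$. I would then substitute the inexact FOC to rewrite the right side as $\tfrac{1}{\tau}\W_2^2(\wt\rho_j^{k+1},\wt\rho_j^k) - \tfrac{1}{\tau}\int \eta_j^{k+1}\cdot(T_{\wt\rho_j^{k+1}}^{\wt\rho_j^{k}}-\id)\,\dd\wt\rho_j^{k+1}$, and use Cauchy--Schwarz together with Young's inequality ($ab \leq (a^2+b^2)/2$) to bound the cross-term by $\tfrac{(\varepsilon_j^{k+1})^2}{2\tau} + \tfrac{1}{2\tau}\W_2^2(\wt\rho_j^{k+1},\wt\rho_j^k)$, arriving at the per-block inequality $\m G_j(\wt\rho_j^k) - \m G_j(\wt\rho_j^{k+1}) \geq \tfrac{1}{2\tau}\W_2^2(\wt\rho_j^{k+1},\wt\rho_j^k) - \tfrac{(\varepsilon_j^{k+1})^2}{2\tau}$.

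Summing over $j\in[m]$ and using the product-space identities $\W_2^2(\wt\rho^{k+1},\wt\rho^k) = \sum_j \W_2^2(\wt\rho_j^{k+1},\wt\rho_j^k)$ and $\|\varepsilon^{k+1}\|^2 = \sum_j (\varepsilon_j^{k+1})^2$ produces $\sum_j\bigl[\m V(\wt\rho^k) - \m V(\wt\rho_j^{k+1}, \wt\rho_{-j}^k) + (\m H_j + \m W_j)(\wt\rho_j^k) - (\m H_j + \m W_j)(\wt\rho_j^{k+1})\bigr] \geq \tfrac{1}{2\tau}\W_2^2(\wt\rho^{k+1}, \wt\rho^k) - \tfrac{\|\varepsilon^{k+1}\|^2}{2\tau}$. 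The remaining task---and the real obstacle specific to the parallel scheme---is to convert the ``partial-update'' surrogate $\sum_j \m V(\wt\rho_j^{k+1}, \wt\rho_{-j}^k)$ into the full $\m V(\wt\rho^{k+1})$; this is the only step at which the cross-Lipschitz Assumption B enters. My plan here is to start from the pointwise identity $V(y) - \sum_j V(x_{-j}, y_j) + (m-1)V(x) = \int_0^1 \sum_j\bigl[\nabla_j V(x + t(y-x)) - \nabla_j V(x_{-j}, x_j + t(y_j-x_j))\bigr]\cdot(y_j - x_j)\,\dd t$, apply the cross-Lipschitz bound of Assumption B to each bracket, and integrate against the product $\bigotimes_j \gamma_j$ of the marginal optimal couplings $\gamma_j\in\Pi_o(\wt\rho_j^k, \wt\rho_j^{k+1})$; one application of Cauchy--Schwarz on $\sum_j \|y_{-j}-x_{-j}\|\,|y_j-x_j|$ then produces the measure-level bound $\bigl|\m V(\wt\rho^{k+1}) - \sum_j \m V(\wt\rho_j^{k+1}, \wt\rho_{-j}^k) + (m-1)\m V(\wt\rho^k)\bigr| \leq L\sqrt{m-1}\,\W_2^2(\wt\rho^{k+1}, \wt\rho^k)$.

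Rearranging this inequality in the direction that upper-bounds $\m V(\wt\rho^{k+1})$, adding $\sum_j[(\m H_j+\m W_j)(\wt\rho_j^k)-(\m H_j+\m W_j)(\wt\rho_j^{k+1})]$ to both sides, and chaining with the summed per-block bound delivers $\m F(\wt\rho^k) - \m F(\wt\rho^{k+1}) \geq \bigl(\tfrac{1}{2\tau} - L\sqrt{m-1}\bigr)\W_2^2(\wt\rho^{k+1}, \wt\rho^k) - \tfrac{\|\varepsilon^{k+1}\|^2}{2\tau}$, which is the claim; positivity of the leading coefficient is precisely the step-size restriction $\tau < \tfrac{1}{2L\sqrt{m-1}}$. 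The hardest part is the Taylor--Lipschitz conversion step, since the inexact convexity relation only controls $\sum_j \m V(\wt\rho_j^{k+1}, \wt\rho_{-j}^k)$ rather than $\m V(\wt\rho^{k+1})$ itself, and the $L\sqrt{m-1}\,\W_2^2$ correction produced by this conversion is exactly what forces the admissible step size to be tighter than in the exact case. Every other ingredient---the geodesic-convexity inequality for each $\m G_j$, the substitution of the inexact FOC, and the Young's-inequality bookkeeping that collapses $\{\eta_j^{k+1}\}_j$ into $\tfrac{\|\varepsilon^{k+1}\|^2}{2\tau}$---is a direct adaptation of the exact proof.
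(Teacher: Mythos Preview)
Your proof is correct, and the first half (the per-block convexity inequality combined with the inexact FOC and Young's inequality to produce $\m G_j(\wt\rho_j^k)-\m G_j(\wt\rho_j^{k+1})\ge \tfrac{1}{2\tau}\W_2^2(\wt\rho_j^{k+1},\wt\rho_j^k)-\tfrac{(\varepsilon_j^{k+1})^2}{2\tau}$) is exactly the paper's Step~1. Where you diverge from the paper is in the ``conversion'' from $\sum_j\m V(\wt\rho_j^{k+1},\wt\rho_{-j}^k)$ to $\m V(\wt\rho^{k+1})$. The paper does this in two further steps: it uses the geodesic convexity of $\m V$ to lower-bound $\sum_j\m V(\wt\rho_j^{k+1},\wt\rho_{-j}^k)$ by $m\m V(\wt\rho^{k+1})+(m-1)\sum_j\int\langle\nabla V_j^{k+1},T_{\wt\rho_j^{k+1}}^{\wt\rho_j^k}-\id\rangle\,\dd\wt\rho_j^{k+1}$, and then invokes the inexact FOC a \emph{second} time (together with convexity of $\m H_j,\m W_j$ and Lemma~\ref{lem: diff_potential_w2}) to control those inner products, incurring another $\tfrac{\|\varepsilon^{k+1}\|^2}{2\tau}$ loss that is later divided by $m$. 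You instead prove the two-sided pointwise bound $\bigl|\m V(\wt\rho^{k+1})-\sum_j\m V(\wt\rho_j^{k+1},\wt\rho_{-j}^k)+(m-1)\m V(\wt\rho^k)\bigr|\le L\sqrt{m-1}\,\W_2^2$ directly from the Taylor identity and Assumption~B, integrated against the product coupling. This is more elementary: it uses the inexact FOC only once, and the conversion step requires neither the geodesic convexity of $\m V$ nor a second appeal to the convexity of $\m H_j,\m W_j$. The paper's route has the advantage of paralleling the exact Lemma~\ref{lem: sufficient_decrease_cond} (where, without the $\eta$-loss, it yields the sharper constant $C_4$); in the inexact case both routes land on the same stated bound. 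A minor remark: your Taylor argument actually gives the tighter constant $\tfrac{L\sqrt{m-1}}{2}$ (from $\int_0^1 t\,\dd t=\tfrac12$), though the weaker $L\sqrt{m-1}$ you wrote is of course sufficient.
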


\begin{lemma}[bound of functional value for inexact WPCG-P]\label{lem: inexact_func_upperbd}
Under Assumptions A, B, C, and D, we have
\begin{align*}
\m F(\wt\rho^k) - \m F(\rho^\ast) \leq \frac{6}{\lambda}\Big[\Big(L^2(m-1) + \frac{1}{\tau^2}\Big)\W_2^2(\wt\rho^k, \wt\rho^{k-1}) + \frac{\|\varepsilon^k\|^2}{\tau^2}\Big].
\end{align*}
\end{lemma}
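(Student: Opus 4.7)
\bigskip

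\noindent\textbf{Proof proposal for Lemma~\ref{lem: inexact_func_upperbd}.} The plan mirrors the strategy for the exact version (Lemma~\ref{lem: bound_func_value}), but I must carry an additional error term $\eta_j^k$ through the analysis. The starting point is the geodesic convexity of $\m F$: evaluating the convexity inequality~\eqref{eqn: convexity} blockwise at $\wt\rho^k$ along the direction of $\rho^\ast$ gives
\begin{equation*}
\m F(\wt\rho^k) - \m F(\rho^\ast) \;\leq\; -\sum_{j=1}^m \int_{\m X_j} \Big\langle \nabla\tfrac{\delta\m F}{\delta\rho_j}(\wt\rho^k),\; T_{\wt\rho_j^k}^{\rho_j^\ast} - \id \Big\rangle\,\dd\wt\rho_j^k.
\end{equation*}
Applying Cauchy--Schwarz blockwise and then the QG condition (Assumption D), I obtain the PL-type inequality
\begin{equation*}
\m F(\wt\rho^k) - \m F(\rho^\ast) \;\leq\; \frac{2}{\lambda}\sum_{j=1}^m \big\|\nabla\tfrac{\delta\m F}{\delta\rho_j}(\wt\rho^k)\big\|_{L^2(\wt\rho_j^k)}^2,
\end{equation*}
so the remaining task is to bound each blockwise Wasserstein gradient at $\wt\rho^k$.

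The first variation at $\wt\rho^k$ is $\nabla V_j^k + \nabla h_j'(\wt\rho_j^k) + \text{(self-interaction)}$, where $V_j^k(x_j)=\int V(x_j,x_{-j})\,\dd\wt\rho_{-j}^k$. By the definition of the inexact residual $\eta_j^k$ displayed just before the theorem, the inexact first-order condition reads
\begin{equation*}
\nabla V_j^{k-1} + \nabla h_j'(\wt\rho_j^k) + \text{(self-interaction)} \;=\; \tfrac{1}{\tau}\bigl[T_{\wt\rho_j^k}^{\wt\rho_j^{k-1}} - \id - \eta_j^k\bigr].
\end{equation*}
Subtracting and rearranging gives the key identity
\begin{equation*}
\nabla\tfrac{\delta\m F}{\delta\rho_j}(\wt\rho^k) \;=\; \bigl(\nabla V_j^k - \nabla V_j^{k-1}\bigr) \;+\; \tfrac{1}{\tau}\bigl(T_{\wt\rho_j^k}^{\wt\rho_j^{k-1}} - \id\bigr) \;-\; \tfrac{1}{\tau}\eta_j^k,
\end{equation*}
which decomposes the Wasserstein gradient into three pieces that I can estimate separately.

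For the first piece, the Lipschitz Assumption B together with an optimal coupling of $\wt\rho_{-j}^{k-1}$ with $\wt\rho_{-j}^k$ yields the pointwise bound $\|\nabla V_j^k(x_j)-\nabla V_j^{k-1}(x_j)\|\leq L\,\W_2(\wt\rho_{-j}^{k-1},\wt\rho_{-j}^k)\leq L\sqrt{\sum_{i\neq j}\W_2^2(\wt\rho_i^{k-1},\wt\rho_i^k)}$, where I use that the product-measure structure allows the joint $\W_2$ distance to be controlled by the sum of marginal $\W_2$ distances. The second piece has $L^2(\wt\rho_j^k)$-norm equal to $\W_2(\wt\rho_j^k,\wt\rho_j^{k-1})$ by the definition of the optimal transport map, and the third piece is controlled directly by $\varepsilon_j^k$ by hypothesis.

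Combining these via the elementary inequality $\|a+b+c\|^2\leq 3(\|a\|^2+\|b\|^2+\|c\|^2)$ and summing over $j$---noting that the double sum $\sum_j\sum_{i\neq j}\W_2^2(\wt\rho_i^{k-1},\wt\rho_i^k)$ collapses to $(m-1)\W_2^2(\wt\rho^{k-1},\wt\rho^k)$---I arrive at
\begin{equation*}
\sum_{j=1}^m\big\|\nabla\tfrac{\delta\m F}{\delta\rho_j}(\wt\rho^k)\big\|_{L^2(\wt\rho_j^k)}^2 \;\leq\; 3\Big(L^2(m-1)+\tfrac{1}{\tau^2}\Big)\W_2^2(\wt\rho^k,\wt\rho^{k-1}) + \tfrac{3}{\tau^2}\|\varepsilon^k\|^2,
\end{equation*}
which, plugged into the PL-type inequality above, gives exactly the claimed bound with the constant $6/\lambda$. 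The most delicate step is the first one: handling the mismatch between the first variation at $\wt\rho^k$ (which involves $\wt\rho_{-j}^k$) and the inexact first-order condition from the subproblem (which involves $\wt\rho_{-j}^{k-1}$, because in the parallel scheme every block moves simultaneously); this mismatch is precisely what forces the $L^2(m-1)$ factor to appear in the final bound and is why Assumption B plays an essential role.
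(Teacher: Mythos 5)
Your proposal is correct and follows essentially the same route as the paper's proof: you decompose the blockwise Wasserstein gradient at $\wt\rho^k$ via the inexact first-order condition into $(\nabla V_j^k-\nabla V_j^{k-1})+\tfrac{1}{\tau}(T_{\wt\rho_j^k}^{\wt\rho_j^{k-1}}-\id)-\tfrac{1}{\tau}\eta_j^k$, bound the three pieces by Assumption B (Lemma~\ref{lem: diff_potential_w2}), the transport-map norm, and $\varepsilon_j^k$, and combine with the factor-$3$ inequality, tensorization, geodesic convexity, Cauchy--Schwarz, and the $\lambda$-QG condition to obtain the $6/\lambda$ constant. The only difference is cosmetic: you package the convexity, Cauchy--Schwarz, and QG steps into a PL-type inequality before bounding the gradient, whereas the paper bounds the gradient first and then applies those same steps.
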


With the above two lemmas, we have
\begin{align*}
\frac{\lambda}{6}\big[\m F(\wt\rho^k) - \m F(\rho^\ast)\big] - \frac{\|\varepsilon^k\|^2}{\tau^2} 
&\stackrel{\ri}{\leq} \Big(L^2(m-1) + \frac{1}{\tau^2}\Big)\W_2^2(\wt\rho^k, \wt\rho^{k-1})\\
&\stackrel{\rii}{\leq} \frac{L^2(m-1) + \frac{1}{\tau^2}}{\frac{1}{2\tau} - L\sqrt{m-1}}\Big[\m F(\wt\rho^{k-1}) - \m F(\wt\rho^{k}) + \frac{\|\varepsilon^k\|^2}{2\tau}\Big].
\end{align*}
Here (i) is by Lemma~\ref{lem: inexact_func_upperbd}, and (ii) is by Lemma~\ref{lem: inexact_suff_decrease}.
Reorganizing the inequality yields
\begin{align*}
\bigg[\frac{\lambda}{6} + \frac{L^2(m-1) + \frac{1}{\tau^2}}{\frac{1}{2\tau} - L\sqrt{m-1}}\bigg]\big[\m F(\wt\rho^k) - \m F(\rho^\ast)\big] 
&\leq  \frac{L^2(m-1) + \frac{1}{\tau^2}}{\frac{1}{2\tau} - L\sqrt{m-1}}\big[\m F(\wt\rho^{k-1}) - \m F(\rho^\ast)\big]\\
&\qquad\qquad + \bigg[\frac{L^2(m-1) + \frac{1}{\tau^2}}{1 - 2\tau L\sqrt{m-1}} + \frac{1}{\tau^2}\bigg]\|\varepsilon^k\|^2.
\end{align*}
Applying Lemma~\ref{lem: seq_QG_inexact} with
\begin{align*}
A = \Big(1 + \frac{\lambda}{6}\cdot \frac{\frac{1}{2\tau} - L\sqrt{m-1}}{L^2(m-1) + \frac{1}{\tau^2}}\Big)^{-1},\quad
B = \frac{\frac{L^2(m-1) + \frac{1}{\tau^2}}{1 - 2\tau L\sqrt{m-1}} + \frac{1}{\tau^2}}{\frac{L^2(m-1) + \frac{1}{\tau^2}}{\frac{1}{2\tau} - L\sqrt{m-1}} + \frac{\lambda}{6}},\quad\mbox{and}\quad
\xi_k = \|\varepsilon^k\|^2
\end{align*}
yields the convergence result.

\subsection{Inexact WPCG-P with $\lambda > 0$: Proof of Theorem~\ref{thm: inexactWPCG_conv}}
Recall that we have Equation~\eqref{eqn: func_value_uppbd}
\begin{align*}
\m F(\wt\rho^k) - \m F(\rho^\ast) 
&\leq \sqrt{\Big(3L^2(m-1) + \frac{3}{\tau^2}\Big)\W_2^2(\wt\rho^k, \wt\rho^{k-1}) + \frac{3\|\varepsilon^k\|^2}{\tau^2}} \cdot \W_2(\wt\rho^k, \rho^\ast)\\
&\leq \sqrt{3}\W_2(\wt\rho^k, \rho^\ast)\bigg[\frac{\|\varepsilon^k\|}{\tau} + \sqrt{L^2(m-1) + \frac{1}{\tau^2}}\W_2(\wt\rho^k, \wt\rho^{k-1})\bigg]\\
&\leq \sqrt{3}D_{\m X}\bigg[\frac{\|\varepsilon^k\|}{\tau} + \sqrt{\frac{L^2(m-1) + \frac{1}{\tau^2}}{\frac{1}{2\tau} - L\sqrt{m-1}}\cdot\Big(\m F(\wt\rho^{k-1}) - \m F(\wt\rho^{k}) + \frac{\|\varepsilon^k\|^2}{2\tau}\Big)}\bigg].
\end{align*}
Here, we apply Lemma~\ref{lem: inexact_suff_decrease} and the fact that $\W_2(\wt\rho^k,\rho^\ast)\leq D_{\m X}$. If $\m F(\wt\rho^k) - \m F(\rho^\ast) \geq \sqrt{3}D_{\m X}\frac{\|\varepsilon^k\|}{\tau}$, we have
\begin{align*}
\Big(\frac{\m F(\wt\rho^k) - \m F(\rho^\ast)}{\sqrt{3}D_{\m X}} - \frac{\|\varepsilon^k\|}{\tau}\Big)^2 
\leq \frac{L^2(m-1) + \frac{1}{\tau^2}}{\frac{1}{2\tau} - L\sqrt{m-1}}\Big(\m F(\wt\rho^{k-1}) - \m F(\wt\rho^{k}) + \frac{\|\varepsilon^k\|^2}{2\tau}\Big),
\end{align*}
otherwise
\begin{align*}
\Big(\frac{\m F(\wt\rho^k) - \m F(\rho^\ast)}{\sqrt{3}D_{\m X}} - \frac{\|\varepsilon^k\|}{\tau}\Big)^2 
\leq \frac{\|\varepsilon^k\|^2}{\tau^2}. 
\end{align*}
Therefore, we have
\begin{align*}
\Big(\frac{\m F(\wt\rho^k) - \m F(\rho^\ast)}{\sqrt{3}D_{\m X}} - \frac{\|\varepsilon^k\|}{\tau}\Big)^2 
\leq \frac{L^2(m-1) + \frac{1}{\tau^2}}{\frac{1}{2\tau} - L\sqrt{m-1}}\Big(\m F(\wt\rho^{k-1}) - \m F(\wt\rho^{k}) + \frac{\|\varepsilon^k\|^2}{2\tau}\Big) + \frac{\|\varepsilon^k\|^2}{\tau^2},
\end{align*}
i.e.
\begin{align*}
&\frac{\frac{1}{2\tau} - L\sqrt{m-1}}{L^2(m-1) + \frac{1}{\tau^2}}\cdot\frac{[\m F(\wt\rho^k) - \m F(\rho^\ast)]^2}{3D_{\m X}^2} + \Big[1 - \frac{\frac{1}{2\tau} - L\sqrt{m-1}}{L^2(m-1) + \frac{1}{\tau^2}}\cdot \frac{2\|\varepsilon^k\|}{\sqrt{3}\tau D_{\m X}}\Big]\cdot\big[\m F(\wt\rho^k) - \m F(\rho^\ast)\big]\\
&\qquad\qquad\qquad\qquad\qquad\leq \big[\m F(\wt\rho^{k-1}) - \m F(\rho^\ast)\big] 
+ \frac{\|\varepsilon^k\|^2}{2\tau}.
\end{align*}
Applying Lemma~\ref{lem: seq_convex_inexact} with
\begin{align*}
A = \frac{\frac{1}{2\tau} - L\sqrt{m-1}}{L^2(m-1) + \frac{1}{\tau^2}}\cdot\frac{1}{3D_{\m X}^2},\quad\!
B = \frac{\frac{1}{2\tau} - L\sqrt{m-1}}{L^2(m-1) + \frac{1}{\tau^2}} \cdot\frac{2}{\sqrt{3}\tau D_{\m X}},\quad\!
C = \frac{1}{2\tau},
\!\quad\mbox{and}\quad\!
\xi_k = \|\varepsilon^k\|
\end{align*}
yields the convergence result.

\section{Proof of Lemmas in the Main Theorems}\label{app: additional_proof}
To prove the two steps mentioned in the sketch at the beginning of Appendix~\ref{app: main proof}, the most important result is the following first-order optimality condition (FOC). The proof, which will be postponed to Appendix~\ref{app: proof_FOC}, is similar to Proposition 8.7 in~\citep{santambrogio2015optimal} for $h(x) = x\log x$ and $W = 0$.

\begin{lemma}[FOC]\label{lem: ot_map}
Consider the Wasserstein proximal gradient scheme
\begin{align*}
\rho_\tau = \argmin_{\mu\in\ms P_2^r(\m X)}\int_{\m X} U(x)\,\dd\mu + \int_{\m X} h(\mu) +  \int\!\!\!\int_{\m X\times\m X} W(x, y)\,\dd\mu(x)\dd\mu(y) + \frac{1}{2\tau}\W_2^2(\rho, \mu).
\end{align*}
Under Assumption A, $\rho_\tau$ satisfies
\begin{align*}
    T_{\rho_\tau}^\rho - \id = \tau\bigg(\nabla U + \nabla h'(\rho_\tau) + \nabla\!\int_{\m X}W(\cdot, y)\,\dd\rho_\tau(y) + \nabla\!\int_{\m X}W(y, \cdot)\,\dd\rho_\tau(y)\bigg)
\end{align*}
\end{lemma}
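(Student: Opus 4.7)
I would use the classical perturbation argument on $\ms P_2^r(\m X)$, following the template of Proposition 8.7 in \citep{santambrogio2015optimal}. Denote the objective of the subproblem by $J(\mu)$, and for an arbitrary test measure $\tilde\mu \in \ms P_2^r(\m X) \cap L^\infty(\m X)$, set $\rho_\varepsilon = (1-\varepsilon)\rho_\tau + \varepsilon\tilde\mu$ for $\varepsilon \in [0,1]$. Because $\rho_\varepsilon$ is a valid competitor and $\rho_\tau$ is a minimizer, one has $\frac{\dd}{\dd\varepsilon} J(\rho_\varepsilon)\big|_{\varepsilon=0^+} \geq 0$; this one-sided variational inequality is the starting point from which the FOC will be extracted.

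\textbf{First variations term by term.} \ri The linear potential is immediate: $\int U\,\dd\rho_\varepsilon$ differentiates to $\int U\,\dd(\tilde\mu - \rho_\tau)$, so the first variation is $U$. \rii The self-interaction $\iint W\,\dd\rho_\varepsilon\,\dd\rho_\varepsilon$ expands as a quadratic in $\varepsilon$, and a direct differentiation gives the first variation $\int W(\cdot,y)\,\dd\rho_\tau(y) + \int W(y,\cdot)\,\dd\rho_\tau(y)$. \riii The internal energy $\int h(\rho_\varepsilon)$ is the delicate term: convexity of $h$ together with Assumption~A(3), which ensures $\rho_\tau h'(\rho_\tau) \in L^1(\m X_j)$, supplies an integrable dominating function for the difference quotient $\bigl[h\big((1-\varepsilon)\rho_\tau + \varepsilon\tilde\mu\big) - h(\rho_\tau)\bigr]/\varepsilon$; dominated convergence then identifies the first variation as $h'(\rho_\tau)$. (iv) For $\frac{1}{2\tau}\W_2^2(\rho,\cdot)$ I would invoke Proposition 7.17 (together with Theorem 1.52) of \citep{santambrogio2015optimal}, which, using compactness of $\m X$ and $\rho_\tau \in \ms P_2^r(\m X)$, identifies the first variation as $\frac{1}{\tau}\phi$ where $\phi$ is the (essentially unique) Kantorovich potential from $\rho_\tau$ to $\rho$.

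\textbf{Extracting the FOC.} Summing the four contributions and exploiting that $\tilde\mu$ is arbitrary (one perturbs in both directions in the interior of the support of $\rho_\tau$ to promote the one-sided inequality to an equality), there exists a constant $C \in \mb R$ such that
\[
U(x) + h'(\rho_\tau(x)) + \int_{\m X} W(x,y)\,\dd\rho_\tau(y) + \int_{\m X} W(y,x)\,\dd\rho_\tau(y) + \tfrac{1}{\tau}\phi(x) = C
\]
for $\rho_\tau$-a.e.\ $x$. Taking gradients $\rho_\tau$-a.e.\ and using Brenier's theorem in the form $\nabla\phi(x) = x - T_{\rho_\tau}^\rho(x)$ gives $\frac{1}{\tau}\nabla\phi = \frac{1}{\tau}(\id - T_{\rho_\tau}^\rho)$, and rearranging yields exactly the claimed identity $T_{\rho_\tau}^\rho - \id = \tau\bigl(\nabla U + \nabla h'(\rho_\tau) + \nabla\!\int W(\cdot,y)\,\dd\rho_\tau(y) + \nabla\!\int W(y,\cdot)\,\dd\rho_\tau(y)\bigr)$.

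\textbf{Main obstacle.} The principal technical hurdle is justifying differentiation under the integral for $\int h(\rho_\varepsilon)$ when $h$ is a general convex function beyond $x\log x$, since $h$ may fail to be globally Lipschitz and $h'$ may blow up near $0$. Assumption~A is tailored precisely for this step: condition (1) controls the negative part of $h$ and secures integrability of $h(\rho_\tau)$; condition (2) encodes the McCann-type geodesic convexity that guarantees $\rho_\tau$ is a regular minimizer; and crucially condition (3) supplies the $L^1$ majorant $\rho_\tau|h'(\rho_\tau)|$ that legitimizes the dominated convergence argument. A secondary subtlety is the identification of the first variation of $\W_2^2(\rho,\cdot)$, which relies on compactness of $\m X$ and the uniqueness up to additive constants of the Kantorovich potential, both of which hold under our standing hypotheses.
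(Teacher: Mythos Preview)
Your proposal is correct and follows essentially the same route as the paper: compute first variations term by term (citing Santambrogio's Proposition~7.17 for the Wasserstein part and Remark~7.13 for the linear and interaction parts), assemble them into the constant--on--support identity via Proposition~7.20, and take gradients using Brenier's theorem.

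One small sharpening is worth flagging in your treatment of the internal energy. You write that Assumption~A(3) ``supplies the $L^1$ majorant $\rho_\tau|h'(\rho_\tau)|$'' for the dominated convergence step, but that by itself is not quite enough: since the perturbation $\tilde\mu$ is bounded but may be nonzero where $\rho_\tau$ is tiny, the dominating function for $\partial_\varepsilon h\big((1-\varepsilon)\rho_\tau+\varepsilon\tilde\mu\big)$ involves $\|\tilde\mu\|_\infty\,|h'(\rho_\tau)|$, so one actually needs $h'(\rho_\tau)\in L^1(\m X)$, not just $\rho_\tau h'(\rho_\tau)\in L^1(\m X)$. The paper closes this gap with a short preliminary argument: perturbing first toward the constant density $c=|\m X|^{-1}$ (which uses compactness of $\m X$) and combining the resulting inequality with Fatou and A(3) yields $h'(\rho_\tau)\in L^1(\m X)$; only then does the dominated convergence argument go through for general $\tilde\mu\in L^\infty$. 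Incorporating this extra step, your plan matches the paper's proof.
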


For simplicity, in the following proof, define
\begin{align*}
\m H(\rho) = \sum_{j=1}^m\m H_j(\rho_j) \quad\mbox{and}\quad \m W(\rho) = \sum_{j=1}^m\m W_j(\rho_j)
\end{align*}
for $\rho = (\rho_1, \dots, \rho_m)$. Also, let
\begin{align*}
    \nabla_{\W_2}\m W_j(\rho_j) = \nabla\!\int_{\m X_j}W_j(\cdot, y)\,\dd\rho_j(y) + \nabla\!\int_{\m X_j}W_j(y, \cdot)\,\dd\rho_j(y)
\end{align*}
be the Wasserstein gradient of $\m W_j$ at $\rho_j$ with respect to $\W_2$ metric. Note that this Wasserstein gradient is a function on $\m X_j$.

\subsection{Proof of Lemma~\ref{lem: sufficient_decrease_cond}}

For simplicity, define
\begin{align*}
V_j^k = \int_{\m X_{-j}}V(x_j, x_{-j})\,\dd \rho_{-j}^k(x_{-j}),
\quad\mbox{and}\quad
V_{-j} = \int_{\m X_j}V(x_j, x_{-j})\,\dd \rho_j^k(x_j).
\end{align*}
\underline{Step 1.} First, let us show
\begin{align}\label{eqn: upper_bound_sumV}
    \sum_{j=1}^m\m V(\rho_{j}^{k+1}\otimes \rho_{-j}^k) \leq m\m V(\rho^k) + \m H(\rho^k) + \m W(\rho^k) - \m H(\rho^{k+1}) - \m W(\rho^{k+1}) - \frac{1}{2\tau}\W_2^2(\rho^{k+1}, \rho^k).
\end{align}
In fact, by definition of $\rho_j^{k+1}$, we have
\begin{align*}
    \m V(\rho_j^{k+1}\otimes \rho_{-j}^{k}) + \m H_j(\rho_j^{k+1}) + \m W_j(\rho_j^{k+1}) + \frac{1}{2\tau}\W_2^2(\rho_j^{k+1}, \rho_j^{k}) \leq \m V(\rho^{k}) + \m H_j(\rho_j^{k}) + \m W_j(\rho_j^k).
\end{align*}
Summing from $j=1$ to $m$ yields equation (\ref{eqn: upper_bound_sumV}).

\vspace{0.5em}
\noindent\underline{Step 2.} Next, we will show that
\begin{align}\label{eqn: lower_bound_sumV}
\sum_{j=1}^m\m V(\rho_{j}^{k+1}\otimes \rho_{-j}^k) \geq m\m V(\rho^{k+1}) + (m-1)\sum_{j=1}^m\int_{\m X_j}\big\langle\nabla V_j^{k+1}, T_{\rho_j^{k+1}}^{\rho_j^k}-\id\big\rangle\,\dd\rho_j^{k+1}.
\end{align}
To prove this, by convexity of $\m V$ we have
\begin{align*}
\m V(\rho_j^{k+1}\otimes \rho_{-j}^k) 
&\stackrel{\ri}{\geq} \m V(\rho_j^{k+1}\otimes\rho_{-j}^{k+1}) + \int_{\m X_{-j}}\bigg\langle \nabla \frac{\delta\m V(\rho_j^{k+1}\otimes\cdot)}{\delta\rho_{-j}}(\rho_{-j}^{k+1}), T_{\rho_{-j}^{k+1}}^{\rho_{-j}^k}-\id\bigg\rangle\,\dd\rho_{-j}^{k+1}\\
&\stackrel{\rii}{=} \m V(\rho^{k+1}) + \sum_{i\neq j}\int_{\m X_{-j}}\big\langle \nabla_i V_{-j}^{k+1}, T_{\rho_i^{k+1}}^{\rho_i^{k}}-\id\big\rangle\,\dd\rho_{-j}^{k+1}\\
&= \m V(\rho^{k+1}) + \sum_{i\neq j}\int_{\m X_i}\big\langle \nabla V_i^{k+1}, T_{\rho_i^{k+1}}^{\rho_i^k}-\id\big\rangle\,\dd\rho_i^{k+1}
\end{align*}
Here, (i) is by Equation~\eqref{eqn: convexity}; (ii) is by
\begin{align*}
\nabla\frac{\delta\m V(\rho_j^{k+1}\otimes\cdot)}{\delta\rho_{-j}}(\rho_{-j}^{k+1}) 
&=  \nabla V_{-j}^{k+1} 
= \big(\nabla_1 V_{-j}^{k+1}, \dots, \nabla_{j-1} V_{-j}^{k+1}, \nabla_{j+1} V_{-j}^{k+1},\dots, \nabla_m V_{-j}^{k+1}\big).
\end{align*}
Summing the inequality above from $j=1$ to $m$ yields equation (\ref{eqn: lower_bound_sumV}).

\vspace{0.5em}
\noindent\underline{Step 3.} Let us show
\begin{align}\label{eqn: lower_bound_inner_prod_gradV}
\begin{aligned}
&\quad\,\sum_{j=1}^m\int_{\m X_j}\big\langle\nabla V_j^{k+1}, T_{\rho_j^{k+1}}^{\rho_j^k}-\id\big\rangle\,\dd\rho_j^{k+1}\\ 
&\geq \big(\tau^{-1}-L\sqrt{m-1}\big)\W_2^2(\rho^{k+1}, \rho^k) - \Big(\m H(\rho^k) - \m H(\rho^{k+1})\Big) - \Big(\m W_j(\rho_j^k) - \m W_j(\rho_j^{k+1})\Big).
\end{aligned}
\end{align}
By Lemma \ref{lem: ot_map}, we have $T_{\rho_j^{k+1}}^{\rho_j^k}-\id = \tau\nabla V_j^k + \tau\nabla h_j'(\rho_j^{k+1}) + \tau \nabla_{\W_2}\m W_j(\rho_j^{k+1})$. 
This implies
\begin{align*}
&\quad\, \sum_{j=1}^m\int_{\m X_j}\big\langle\nabla V_j^{k+1}, T_{\rho_j^{k+1}}^{\rho_j^k}-\id\big\rangle\,\dd\rho_j^{k+1}\\
&= \sum_{j=1}^m\int_{\m X_j} \bigg\langle\frac{T_{\rho_j^{k+1}}^{\rho_j^k}-\id}{\tau} - \big(\nabla V_j^k + \nabla h_j'(\rho_j^{k+1}) + \nabla_{\W_2}\m W_j(\rho_j^{k+1})\big), T_{\rho_j^{k+1}}^{\rho_j^k}-\id\bigg\rangle\\
&\qquad\qquad\qquad\qquad\qquad\qquad\qquad\qquad\qquad\qquad\qquad\qquad\qquad  + \big\langle\nabla V_j^{k+1}, T_{\rho_j^{k+1}}^{\rho_j^k}-\id\big\rangle \,\dd\rho_j^{k+1}\\
&=\sum_{j=1}^m\bigg[ \int_{\m X_j}\Big\langle\nabla V_j^{k+1}-\nabla V_j^k, T_{\rho_j^{k+1}}^{\rho_j^k}-\id\Big\rangle\,\dd\rho_j^{k+1} - \int_{\m X_j}\Big\langle\nabla h_j'(\rho_j^{k+1}), T_{\rho_j^{k+1}}^{\rho_j^k}-\id\Big\rangle\,\dd\rho_j^{k+1}\\
&\qquad\qquad\qquad\qquad\qquad\qquad - \int_{\m X_j}\Big\langle\nabla_{\W_2}\m W_j(\rho_j^{k+1}), T_{\rho_j^{k+1}}^{\rho_j^k}-\id\Big\rangle\,\dd\rho_j^{k+1} + \frac{1}{\tau}\W_2^2(\rho_j^{k+1}, \rho_j^k)\bigg]\\
&\stackrel{\ri}{\geq} \sum_{j=1}^m\bigg[\int_{\m X_j}\Big\langle\nabla V_j^{k+1}-\nabla V_j^k, T_{\rho_j^{k+1}}^{\rho_j^k}-\id\Big\rangle\,\dd\rho_j^{k+1} - \Big(\m H_j(\rho_j^k) - \m H_j(\rho_j^{k+1})\Big)\\ &\qquad\qquad\qquad\qquad\qquad\qquad\qquad\qquad\qquad\qquad - \Big(\m W_j(\rho_j^k) - \m W_j(\rho_j^{k+1})\Big)\bigg] + \frac{1}{\tau}\W_2^2(\rho^{k+1}, \rho^k)\\
&\stackrel{\rii}{\geq} -\sum_{j=1}^m\W_2(q_j^{k+1}, q_j^k)\cdot\Big\|\nabla V_j^{k+1}-\nabla V_j^k\Big\|_{L^2(\rho_j^{k+1}; \m X_j)} - \Big(\m H(\rho^k) - \m H(\rho^{k+1})\Big)\\
&\qquad\qquad\qquad\qquad\qquad\qquad\qquad\qquad\qquad - \Big(\m W(\rho^k) - \m W(\rho^{k+1})\Big) + \frac{1}{\tau}\W_2^2(\rho^{k+1}, \rho^k)\\
&\stackrel{\riii}{\geq} -\sum_{j=1}^m\W_2(\rho_j^{k+1}, \rho_j^k)\cdot L\W_2(\rho_{-j}^{k+1}, \rho_{-j}^k) - \Big(\m H(\rho^k) - \m H(\rho^{k+1})\Big)\\
&\qquad\qquad\qquad\qquad\qquad\qquad\qquad\qquad\qquad - \Big(\m W(\rho^k) - \m W(\rho^{k+1})\Big) + \frac{1}{\tau}\W_2^2(\rho^{k+1}, \rho^k)\\
&\stackrel{\textrm{(iv)}}{\geq} -L\sum_{j=1}^m \bigg[\frac{\sqrt{m-1}}{2}\W_2^2(\rho_j^{k+1}, \rho_j^k) + \frac{1}{2\sqrt{m-1}}\W_2^2(\rho_{-j}^{k+1}, \rho_{-j}^k)\bigg]\\
&\qquad\qquad\qquad\qquad\qquad - \Big(\m H(\rho^k) - \m H(\rho^{k+1})\Big) - \Big(\m W(\rho^k) - \m W(\rho^{k+1})\Big)+ \frac{1}{\tau}\W_2^2(\rho^{k+1}, \rho^k)\\
&= \big(\tau^{-1}-L\sqrt{m-1}\big)\W_2^2(\rho^{k+1}, \rho^k) - \Big(\m H(\rho^k) - \m H(\rho^{k+1})\Big) - \Big(\m W(\rho^k) - \m W(\rho^{k+1})\Big).
\end{align*}
Here, (i) is due to the fact that both $\m H_j$ and $\m W_j$ are convex along geodesics; (ii) is by Cauchy--Schwarz inequality; (iii) is by Lemma \ref{lem: diff_potential_w2}; (iv) is by AM-GM inequality.

\vspace{0.5em}
\noindent\underline{Step 4.} Finally, let us prove the statement. By Equations~\eqref{eqn: upper_bound_sumV}, \eqref{eqn: lower_bound_sumV}, and \eqref{eqn: lower_bound_inner_prod_gradV}, we have
\begin{align*}
&\quad\,m\m V(\rho^k) + \m H(\rho^k) - \m H(\rho^{k+1}) + \m W(\rho^k) - \m W(\rho^{k+1}) - \frac{1}{2\tau}\W_2^2(\rho_j^{k+1}, \rho_j^k) 
\geq\sum_{j=1}^m\m V(\rho_{-j}^{k+1}\otimes \rho_j^k)\\
&\geq m\m V(\rho^{k+1}) + (m-1)\sum_{j=1}^m\int_{\m X_j}\big\langle\nabla V_j^{k+1}, T_{\rho_j^{k+1}}^{\rho_j^k}-\id\big\rangle\,\dd\rho_j^{k+1}\\
&\geq m\m V(\rho^{k+1}) + (m-1)\Big[\big(\tau^{-1}-L\sqrt{m-1}\big)\W_2^2(\rho^{k+1}, \rho^k) - \Big(\m H(\rho^k) - \m H(\rho^{k+1})\Big)\\
&\qquad\qquad\qquad\qquad\qquad\qquad\qquad\qquad\qquad\qquad\qquad\qquad\qquad  - \Big(\m W(\rho^k) - \m W(\rho^{k+1})\Big)\Big].
\end{align*}
This implies
\begin{align*}
    \m F(\rho^k) - \m F(\rho^{k+1}) \geq \frac{1}{m}\bigg[\frac{1}{2\tau} + (m-1)\Big(\frac{1}{\tau} - L\sqrt{m-1}\Big)\bigg]\W_2^2(\rho^{k+1}, \rho^{k}).
\end{align*}
Substituting $k$ with $k-1$ yields the result.

\subsection{Proof of Lemma~\ref{lem: bound_func_value}}
To prove the result, we need the following lemma.
\begin{lemma}\label{lem: relative_error_cond}
Under Assumptions A and B, for both the parallel update scheme and the sequential update scheme, we have
\begin{align*}
&\sum_{j=1}^m \int_{\m X_j}\big\|\nabla V_j^k + \nabla h_j'(\rho_j^k) + \nabla_{\W_2}\m W_j(\rho_j^k)\big\|^2\,\dd\rho_j^k \leq \Big(2L^2(m-1) + \frac{2}{\tau^2}\Big) \W_2^2(\rho^k, \rho^{k-1}).
\end{align*}
\end{lemma}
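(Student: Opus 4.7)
The plan is to use the first-order optimality condition (Lemma~\ref{lem: ot_map}) to substitute out the terms $\nabla h_j'(\rho_j^k) + \nabla_{\W_2}\m W_j(\rho_j^k)$ in favor of a potential-difference term and an optimal-transport-displacement term, then use Assumption B together with the product-measure additivity $\W_2^2(\rho_{-j},\rho_{-j}') = \sum_{i\neq j}\W_2^2(\rho_i,\rho_i')$ to close the estimate.

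Concretely, for WPCG-P, Lemma~\ref{lem: ot_map} applied to the subproblem~\eqref{eqn: subproblem-P} yields
\begin{align*}
\nabla h_j'(\rho_j^k) + \nabla_{\W_2}\m W_j(\rho_j^k) \;=\; \frac{T_{\rho_j^k}^{\rho_j^{k-1}} - \id}{\tau} - \nabla V_j^{k-1}.
\end{align*}
Adding $\nabla V_j^k$ to both sides, taking squared norms, and using $\|a+b\|^2 \leq 2\|a\|^2 + 2\|b\|^2$ followed by integration against $\rho_j^k$ (and the identity $\int \|T_{\rho_j^k}^{\rho_j^{k-1}} - \id\|^2 \dd\rho_j^k = \W_2^2(\rho_j^k,\rho_j^{k-1})$) gives
\begin{align*}
\int_{\m X_j} \big\|\nabla V_j^k + \nabla h_j'(\rho_j^k) + \nabla_{\W_2}\m W_j(\rho_j^k)\big\|^2 \dd\rho_j^k \;\leq\; 2\!\int_{\m X_j}\!\|\nabla V_j^k - \nabla V_j^{k-1}\|^2 \dd\rho_j^k + \tfrac{2}{\tau^2}\W_2^2(\rho_j^k,\rho_j^{k-1}).
\end{align*}

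For the potential-difference term, I will fix an optimal coupling $\gamma \in \Pi_o(\rho_{-j}^k,\rho_{-j}^{k-1})$ and use Assumption B together with Jensen's inequality: for every $x_j$,
\begin{align*}
\big\|\nabla V_j^k(x_j) - \nabla V_j^{k-1}(x_j)\big\|^2 \;\leq\; \int \big\|\nabla_j V(x_j,x_{-j}) - \nabla_j V(x_j,x'_{-j})\big\|^2 \dd\gamma \;\leq\; L^2\, \W_2^2(\rho_{-j}^k,\rho_{-j}^{k-1}),
\end{align*}
which equals $L^2 \sum_{i\neq j} \W_2^2(\rho_i^k,\rho_i^{k-1})$ by additivity of $\W_2^2$ on product measures. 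Summing the displayed bound over $j\in[m]$, the double sum $\sum_j\sum_{i\neq j} \W_2^2(\rho_i^k,\rho_i^{k-1})$ collapses to $(m-1)\W_2^2(\rho^k,\rho^{k-1})$, and $\sum_j \W_2^2(\rho_j^k,\rho_j^{k-1}) = \W_2^2(\rho^k,\rho^{k-1})$, producing exactly the claimed $(2L^2(m-1) + 2/\tau^2)\W_2^2(\rho^k,\rho^{k-1})$ bound.

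The sequential case is essentially identical, with the only modification being that the FOC for $\rho_j^k$ involves $\nabla \tilde V_j^k$ where $\tilde V_j^k(x_j) = \int V(x_j,x_{-j})\,\dd\rho_{1:(j-1)}^k\,\dd\rho_{(j+1):m}^{k-1}$ rather than $\nabla V_j^{k-1}$. The difference $\nabla V_j^k - \nabla \tilde V_j^k$ now depends only on the blocks $i > j$ (those yet to be updated in the current sweep), so the same Jensen/Assumption~B argument gives $\|\nabla V_j^k(x_j) - \nabla\tilde V_j^k(x_j)\|^2 \leq L^2 \sum_{i>j}\W_2^2(\rho_i^k,\rho_i^{k-1})$. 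Summing yields $\sum_{j=1}^m \sum_{i>j} \W_2^2(\rho_i^k,\rho_i^{k-1}) = \sum_{i=1}^m (i-1)\W_2^2(\rho_i^k,\rho_i^{k-1}) \leq (m-1)\W_2^2(\rho^k,\rho^{k-1})$, giving the identical bound. There is no real obstacle here; the only point requiring care is bookkeeping the distinction between $\nabla V_j^{k-1}$ (parallel) and $\nabla \tilde V_j^k$ (sequential) in the FOC and verifying that the Lipschitz factor $L$ from Assumption B---which controls mixed off-diagonal variations of $\nabla V$---is precisely what is needed to cross-bound these two schemes by a single estimate.
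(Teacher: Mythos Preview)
Your proposal is correct and follows essentially the same approach as the paper: invoke the first-order optimality condition (Lemma~\ref{lem: ot_map}) to rewrite $\nabla h_j'(\rho_j^k) + \nabla_{\W_2}\m W_j(\rho_j^k)$ via the displacement $(T_{\rho_j^k}^{\rho_j^{k-1}}-\id)/\tau$ minus the appropriate potential term ($\nabla V_j^{k-1}$ for parallel, $\nabla\widetilde V_j^k$ for sequential), apply $\|a+b\|^2\leq 2\|a\|^2+2\|b\|^2$, bound the potential-difference term by $L^2\W_2^2$ on the relevant off-diagonal blocks (the paper packages this as Lemma~\ref{lem: diff_potential_w2}, which is exactly your Jensen/coupling argument), and sum over $j$. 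The bookkeeping in the sequential case is identical to the paper's.
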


\noindent Then, by convexity of $\m F$, we have
\begin{align}
\m F(\rho^k) - \m F(\rho^\ast)
&\leq -\int_{\m X}\Big\langle\nabla\frac{\delta\m F}{\delta\rho}(\rho^k), T_{\rho^k}^{\rho^\ast}-\id\Big\rangle\,\dd\rho^k \nonumber\\
&= - \sum_{j=1}^m\int_{\m X}\Big\langle \nabla_j V + \nabla h_j'(\rho_j^k) + \nabla_{\W_2}\m W_j(\rho_j^{k}), T_{\rho^k_j}^{\rho^\ast_j}-\id\Big\rangle\,\dd\rho^k \nonumber\\
&= -\sum_{j=1}^m \int_{\m X_j}\Big\langle \nabla V_j^k + \nabla h_j'(\rho_j^k) + \nabla_{\W_2}\m W_j(\rho_j^k), T_{\rho_j^k}^{\rho^\ast_j}-\id\Big\rangle\,\dd\rho_j^k \nonumber\\
&\stackrel{\ri}{\leq} \sum_{j=1}^m \sqrt{\int_{\m X_j}\big\|\nabla V_j^k + \nabla h_j'(\rho_j^k) + \nabla_{\W_2}\m W_j(\rho_j^k)\big\|^2\,\dd\rho_j^k}\cdot\W_2(\rho_j^k, \rho^\ast_j) \nonumber\\
&\stackrel{\rii}{\leq}\sqrt{\sum_{j=1}^m\int_{\m X_j}\big\|\nabla V_j^k + \nabla h_j'(\rho_j^k) + \nabla_{\W_2}\m W_j(\rho_j^k)\big\|^2\,\dd\rho_j^k}\cdot\W_2(\rho^k, \rho^\ast ) \nonumber\\
&\stackrel{\riii}{\leq} \sqrt{2L^2(m-1)+\frac{2}{\tau^2}}\W_2(\rho^k, \rho^{k-1})\W_2(\rho^k,\rho^\ast  ).\label{eqn: upper_bound_functional_value2}
\end{align}
Here, both (i) and (ii) are due to Cauchy--Schwarz inequality, and (iii) is by Lemma \ref{lem: relative_error_cond}. By Assumption D ($\lambda$-QG condition), we have
\begin{align*}
    \frac{\lambda}{2}\W_2^2(\rho^k, \rho^\ast ) \leq \m F(\rho^t) - \m F(\rho^\ast  ) \leq \sqrt{2L^2(m-1)+\frac{2}{\tau^2}}\W_2(\rho^k, \rho^{k-1})\W_2(\rho^k,\rho^\ast  ). 
\end{align*}
This implies
\begin{align*}
\W_2(\rho^k, \rho^\ast ) \leq \frac{2}{\lambda}\sqrt{2L^2(m-1)+\frac{2}{\tau^2}}\W_2(\rho^k, \rho^{k-1}).
\end{align*}
Substituting the term $\W_2(\rho^k, \rho^\ast)$ in~\eqref{eqn: upper_bound_functional_value2} with the above inequality yields
\begin{align*}
\m F(\rho^k) - \m F(\rho^\ast ) \leq \frac{2}{\lambda}\Big(2L^2(m-1) + \frac{2}{\tau^2}\Big)\W_2^2(\rho^k, \rho^{k-1}).
\end{align*}

\subsection{Proof of Lemma~\ref{lem: sufficient_decrease_sequential}}
\begin{proof}
By the definition of $\rho_j^{k}$, we have
\begin{align*}
&\quad\,\,\m V(\rho_1^{k-1}, \dots, \rho_j^{k-1}, \rho_{j+1}^{k}, \dots, \rho_m^{k}) + \m H_j(\rho_j^{k-1}) + \m W_j(\rho_j^{k-1})\\
&\geq \m V(\rho_1^{k-1},\dots, \rho_{j-1}^{k-1}, \rho_j^{k},\dots, \rho_m^{k}) + \m H_j(\rho_j^{k}) + \m W_j(\rho_j^k) + \frac{1}{2\tau}\W_2^2(\rho_j^{k-1}, \rho_j^{k}).
\end{align*}
This implies
\begin{align*}
    \m F(\rho^{k-1}) - \m F(\rho^{k}) &= \sum_{j=1}^m \Big[\m V(\rho_1^{k-1}, \dots, \rho_j^{k-1}, \rho_{j}^{k+1}, \dots, \rho_m^{k}) + \m H_j(\rho_j^{k-1}) + \m W_j(\rho_j^{k-1})\\
    &\qquad\qquad\qquad\qquad - \m V(\rho_1^{k-1}\dots, \rho_{j-1}^{k-1}, \rho_j^{k}, \dots, \rho_m^{k}) - \m H_j(\rho_j^{k}) - \m W_j(\rho_j^k)\Big]\\
    &\geq \sum_{j=1}^m \frac{1}{2\tau}\W_2^2(\rho_j^k, \rho_j^{k-1})\\
    &= \frac{1}{2\tau}\W_2^2(\rho^k, \rho^{k-1}).
\end{align*}
\end{proof}


\subsection{Proof of Lemma~\ref{lem: covering}}
\begin{proof}
Let $Y_j = \sum_{l=0}^{M-1} I\{j_l = j\}$ and $A_j = \{Y_j = 0\}\cup\{Y_j > T\}$ be a event. Then
\begin{align*}
    \mb P\big(A_1 \bigcup \cdots \bigcup A_m\big) \leq \sum_{j=1}^m\mb P(A_j) = m\mb P(Y_1 = 0) + m\mb P(Y_1 > T).
\end{align*}
Take $M = Rm\log m$ with some $R$ to be decided later. Note that
\begin{align*}
    \mb P(Y_1 = 0) = \Big(1 - \frac{1}{m}\Big)^M = \bigg[\frac{1}{\big(1 + \frac{1}{m-1}\big)^{m}}\bigg]^{\frac{M}{m}} \leq e^{-\frac{M}{m}} = e^{-R\log m} = m^{-R}.
\end{align*}

Applying Chernoff's inequality to Bernoulli distribution (Theorem 2.3.1 in\citep{vershynin2018high}) yields
\begin{align*}
    \mb P\Big(Y_1 > eR\log m\Big) \leq e^{-R\log m} = m^{-R}.
\end{align*}
Therefore, we have
\begin{align*}
    \mb P\big(A_1 \bigcup \cdots \bigcup A_m\big) \leq \sum_{j=1}^m\mb P(A_j) \leq 2m\cdot m^{-R} = 2m^{1-R}.
\end{align*}
By taking
\[R = 1 - \frac{\log\delta}{\log m}\]
we have $2m^{1-R} = 2\delta$.
\end{proof}

\subsection{Proof of Lemma~\ref{lem: sufficient_decrease_randomized}}
\begin{proof}
By the definition of $\rho_{j_l}^{k, l+1}$, we have
\begin{align*}
\m V(\rho_{j_l}^{k, l+1}, \rho_{-j_l}^{k, l}) + \m H_{j_l}(\rho_{j_l}^{k, l+1}) + \m W_{j_l}(\rho_{j_l}^{k, l+1}) &+ \frac{1}{2\tau}\W_2^2(\rho_{j_l}^{k,l+1}, \rho_{j_l}^{k, l})\\
&\leq     \m V(\rho_{j_l}^{k, l}, \rho_{-j_l}^{k, l}) + \m H_{j_l}(\rho_{j_l}^{k, l}) + \m W_{j_l}(\rho_{j_l}^{k, l}). 
\end{align*}
Therefore, 
\begin{align*}
&\quad\,\m F(\rho^k) - \m F(\rho^{k+1}) = \m F(\rho^{k, 0}) - \m F(\rho^{k, M}) = \sum_{l=0}^{M-1}\big[\m F(\rho^{k, l}) - \m F(\rho^{k, l+1})\big]\\
&= \!\!\sum_{l=0}^{M-1}\!\! \big[\m V(\rho_{j_l}^{k, l}, \rho_{-j_l}^{k, l}) \!+\! \m H_{j_l}(\rho_{j_l}^{k, l}) \!+\! \m W_{j_l}(\rho_{j_l}^{k, l})\big] \!-\! \big[\m V(\rho_{j_l}^{k, l+1}, \rho_{-j_l}^{k, l+1}) \!+\! \m H_{j_l}(\rho_{j_l}^{k, l+1}) \!+\! \m W_{j_l}(\rho_{j_l}^{k, l+1})\big]\\
&\stackrel{\ri}{=}\!\! \sum_{l=0}^{M-1}\!\! \big[\m V(\rho_{j_l}^{k, l}, \rho_{-j_l}^{k, l}) \!+\! \m H_{j_l}(\rho_{j_l}^{k, l}) \!+\! \m W_{j_l}(\rho_{j_l}^{k, l})\big] - \big[\m V(\rho_{j_l}^{k, l+1}, \rho_{-j_l}^{k, l}) \!+\! \m H_{j_l}(\rho_{j_l}^{k, l+1}) \!+\! \m W_{j_l}(\rho_{j_l}^{k, l+1})\big]\\
&\geq \sum_{l=0}^{M-1} \frac{1}{2\tau}\W_2^2(\rho_{j_l}^{k, l}, \rho_{j_l}^{k, l+1}) 
\stackrel{\rii}{=} \frac{1}{2\tau}\sum_{l=0}^{M-1} \W_2^2(\rho^{k, l}, \rho^{k, l+1}).
\end{align*}
Here, both (i) and (ii) are due to $\rho_{-j_l}^{k, l+1} = \rho_{-j_l}^{k, l}$. 
\end{proof}

\subsection{Proof of Lemma~\ref{lem: bound_func_value_randomized}}
\begin{proof}
Similar to the argument in Lemma \ref{lem: bound_func_value}, we have
\begin{align*}
\m F(\rho^{k}) - \m F(\rho^\ast) 
&\leq \sqrt{\sum_{j=1}^m \int_{\m X_j}\big\|\nabla V_j^{k} + \nabla h_j'(\rho_j^{k}) + \nabla_{\W_2}\m W_j(\rho_j^{k})\big\|^2\,\dd\rho_j^{k}} \cdot \W_2(\rho^{k}, \rho^\ast).
\end{align*}
To bound the norm of the blockwise Wasserstein gradient, we need the following lemma.

\begin{lemma}\label{lem: relative_error_bound_randomized}
For any $k\in \mb Z_+$, let $M\in\mb Z_+$ be an integer such that all coordinates are updated at least once but at most $T$ times through $M$ updates in the $k$-th iteration. Under Assumptions A and B, we have
\begin{align*}
\sum_{j=1}^m \int_{\m X_j}\big\|\nabla V_j^{k} + \nabla h_j'(\rho_j^{k}) + \nabla_{\W_2}\m W_j(\rho_j^{k})\big\|^2\,\dd\rho_j^{k}
&\leq \Big(2L^2mT + \frac{2}{\tau^2}\Big) \sum_{l=0}^{M-1}\W_2^2(\rho^{k-1, l}, \rho^{k-1, l+1}).
\end{align*}
\end{lemma}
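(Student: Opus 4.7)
}

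The plan is to mimic the strategy of Lemma~\ref{lem: relative_error_cond}, but with a careful bookkeeping of the asynchronous nature of the random update scheme. For each block $j \in [m]$, let $l_j \in \{0, 1, \dots, M-1\}$ denote the \emph{last} sub-iteration at which coordinate $j$ was selected in the $k$-th outer iteration (well-defined since every coordinate is updated at least once). By construction, $j_{l_j} = j$, $\rho_j^k = \rho_j^{k-1, l_j+1}$, and for all $l > l_j$ we have $j_l \neq j$. The FOC (Lemma~\ref{lem: ot_map}) applied to the sub-problem at step $l_j$ then gives
\begin{align*}
\nabla V_j(\cdot, \rho_{-j}^{k-1, l_j}) + \nabla h_j'(\rho_j^k) + \nabla_{\W_2}\m W_j(\rho_j^k) \;=\; \frac{T_{\rho_j^k}^{\rho_j^{k-1,l_j}} - \id}{\tau}.
\end{align*}

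Next, I would split the target quantity using the elementary inequality $(a+b)^2 \le 2a^2 + 2b^2$, writing
\begin{align*}
\nabla V_j^k + \nabla h_j'(\rho_j^k) + \nabla_{\W_2}\m W_j(\rho_j^k) \;=\; \frac{T_{\rho_j^k}^{\rho_j^{k-1,l_j}} - \id}{\tau} \;+\; \bigl[\nabla V_j^k - \nabla V_j(\cdot, \rho_{-j}^{k-1, l_j})\bigr],
\end{align*}
so that after integrating against $\rho_j^k$ the first term contributes $\tfrac{2}{\tau^2}\W_2^2(\rho_j^k, \rho_j^{k-1,l_j})$ and the second contributes at most $2L^2\,\W_2^2(\rho_{-j}^k, \rho_{-j}^{k-1,l_j})$ by Assumption B together with an analog of Lemma~\ref{lem: diff_potential_w2}.

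The main technical step is then to bound the two $\W_2^2$ quantities by the sum $\sum_{l=0}^{M-1}\W_2^2(\rho^{k-1,l},\rho^{k-1,l+1})$ that appears on the right-hand side. For the first, note that the indices $\{l_j\}_{j=1}^m$ are distinct, and at each step $l_j$ only block $j$ moves, so
\begin{align*}
\sum_{j=1}^m \W_2^2(\rho_j^k, \rho_j^{k-1,l_j}) \;=\; \sum_{j=1}^m \W_2^2(\rho^{k-1,l_j+1}, \rho^{k-1,l_j}) \;\le\; \sum_{l=0}^{M-1}\W_2^2(\rho^{k-1,l+1},\rho^{k-1,l}).
\end{align*}
For the second, I apply the triangle inequality along the chain of updates $l_j+1, \dots, M-1$, combined with Cauchy--Schwarz, using that each coordinate $i \neq j$ is updated at most $T$ times after step $l_j$:
\begin{align*}
\W_2^2(\rho_{-j}^k, \rho_{-j}^{k-1,l_j}) \;\le\; T \!\!\sum_{l=l_j+1}^{M-1}\!\! \W_2^2(\rho^{k-1,l+1}, \rho^{k-1,l}) \;\le\; T\sum_{l=0}^{M-1}\W_2^2(\rho^{k-1,l+1},\rho^{k-1,l}).
\end{align*}
Summing the bound for $\W_2^2(\rho_{-j}^k, \rho_{-j}^{k-1,l_j})$ over $j \in [m]$ yields an extra factor of $m$, and combining with the coefficient $2L^2$ gives the $2L^2 m T$ term in the statement; the first piece yields the $2/\tau^2$ term.

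The main obstacle I expect is the second $\W_2^2$ bound: since the asynchronous updates do not directly decompose the $\W_2$ between product tuples into a single sub-iteration increment, the triangle/Cauchy--Schwarz step inflates the bound by a factor of $T$, which is precisely why the ``at most $T$ updates per coordinate'' hypothesis---and consequently the larger batch size $M = \lceil 2m \log(mL)\rceil$ in Theorem~\ref{thm: randomized}---is needed. Everything else is a routine adaptation of the argument used in Lemma~\ref{lem: relative_error_cond}.
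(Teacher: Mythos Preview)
Your proposal is correct and follows essentially the same approach as the paper's proof: define the last-update index for each block (the paper calls it $I_{j,k}=l_j+1$), apply the FOC there, split via $(a+b)^2\le 2a^2+2b^2$, bound the transport-map piece by noting the $l_j$ are distinct, and bound the gradient-difference piece by a coordinatewise triangle inequality plus Cauchy--Schwarz using the ``at most $T$ updates'' hypothesis. The paper packages the last step slightly differently---writing out the double sum $\sum_{i,j}\W_2^2(\rho_i^{k-1,M},\rho_i^{k-1,I_{j,k}-1})$ and introducing the set $S_i^{k-1}(a,b)$ of actual change-times---but this is just a more explicit bookkeeping of the same Cauchy--Schwarz argument you sketch.
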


\noindent With the above lemma, we have
\begin{align}\label{eqn: upper_bound_functional_value2_random}
\m F(\rho^{k}) - \m F(\rho^\ast) \leq \sqrt{\Big(2L^2mT + \frac{2}{\tau^2}\Big) \sum_{l=0}^{M-1}\W_2^2(\rho^{k-1, l}, \rho^{k-1, l+1})} \cdot \W_2(\rho^{k}, \rho^\ast).
\end{align}
By Assumption D ($\lambda$-QG), we have
\begin{align*}
\frac{\lambda}{2}\W_2^2(\rho^k, \rho^\ast) \leq \m F(\rho^k) - \m F(\rho^\ast).
\end{align*}
Combining the above two pieces yields
\begin{align*}
\frac{\lambda}{2}\W_2(\rho^k, \rho^\ast) \leq \sqrt{\Big(2L^2mT + \frac{2}{\tau^2}\Big)\sum_{l=0}^{M-1}\W_2^2(\rho^{k-1, l}, \rho^{k-1, l+1})},
\end{align*}
which implies
\begin{align*}
\m F(\rho^k) - \m F(\rho^\ast) \leq \frac{4}{\lambda}\Big(L^2mT + \frac{1}{\tau^2}\Big)\sum_{l=0}^{M-1}\W_2^2(\rho^{k-1, l}, \rho^{k-1, l+1}).
\end{align*}
\end{proof}

\subsection{Proof of Lemma~\ref{lem: inexact_suff_decrease}}
\underline{Step 1.} By convexity of $\m F$, we have
\begin{align*}
&\quad\,\big[\m V(\wt\rho_j^{k}\otimes\wt \rho_{-j}^k) + \m H_j(\wt\rho_j^{k}) + \m W_j(\wt\rho_j^{k})\big] - \big[\m V(\wt\rho_j^{k+1}\otimes\wt \rho_{-j}^k) + \m H_j(\wt\rho_j^{k+1}) + \m W_j(\wt\rho_j^{k+1})\big]\\
&\geq \int_{\m X_j}\Big\langle\nabla V_j^k + \nabla h_j'(\wt\rho_j^{k+1}) + \nabla_{\W_2}\m W_j(\wt\rho_j^{k+1}), T_{\wt\rho_j^{k+1}}^{\wt\rho_j^k} - \id\Big\rangle\,\dd\wt\rho_j^{k+1}\\
&= \frac{1}{\tau}\int_{\m X_j}\Big\langle T_{\wt\rho_j^{k+1}}^{\wt\rho_j^k} -\id - \eta_j^{k+1}, T_{\wt\rho_j^{k+1}}^{\wt\rho_j^k} - \id\Big\rangle\,\dd\wt\rho_j^{k+1}\\
&= \frac{1}{\tau}\W_2^2(\wt\rho_j^{k+1}, \wt\rho_j^k) - \frac{1}{\tau}\int_{\m X_j}\Big\langle \eta_j^{k+1}, T_{\wt\rho_j^{k+1}}^{\wt\rho_j^k} - \id\Big\rangle\,\dd\wt\rho_j^{k+1}\\
&\geq \frac{1}{\tau}\W_2^2(\wt\rho_j^{k+1}, \wt\rho_j^k) - \frac{1}{\tau}\bigg(\frac{1}{2}\int_{\m X_j}\|\eta_j^{k+1}\|^2\,\dd\wt\rho_j^{k+1} + \frac{1}{2}\W_2^2(\wt\rho_j^{k+1}, \wt\rho_j^k)\bigg)\\
&\geq \frac{1}{2\tau}\W_2^2(\wt\rho_j^{k+1}, \wt\rho_j^k) - \frac{(\varepsilon_j^{k+1})^2}{2\tau}.
\end{align*}
Summing $j$ from $1$ to $m$ yields
\begin{align*}
\sum_{j=1}^m \m V(\wt\rho_j^{k+1}\otimes\wt\rho_{-j}^k) &\leq m\m V(\wt\rho^{k}) + \m H(\wt\rho^k) + \m W(\wt\rho^k)\\
&\qquad\qquad\qquad - \m H(\wt\rho^{k+1}) - \m W(\wt\rho^k) - \frac{1}{2\tau}\W_2^2(\wt\rho^{k+1}, \wt\rho^k) + \frac{\|\varepsilon^{k+1}\|^2}{2\tau}.
\end{align*}
\underline{Step 2.} By convexity of $\m V$, we have
\begin{align*}
\m V(\wt\rho_j^{k+1}\otimes\wt\rho_{-j}^k) 
&\geq \m V(\wt\rho_j^{k+1}\otimes\wt\rho_{-j}^{k+1}) + \int_{\m X_{-j}}\Big\langle\nabla \frac{\delta\m V(\wt\rho_j^{k+1}\otimes\cdot)}{\delta\rho_{-j}}(\wt\rho_{-j}^{k+1}), T_{\wt\rho_{-j}^{k+1}}^{\wt\rho_{-j}^{k}} - \id\Big\rangle\,\dd\wt\rho_{-j}^{k+1}\\
&= \m V(\wt\rho^{k+1}) + \sum_{i\neq j}\int_{\m X_i}\Big\langle\nabla V_i^{k+1}, T_{\wt\rho_i^{k+1}}^{\wt\rho_i^k} - \id\Big\rangle\,\dd\wt\rho_i^{k+1}.
\end{align*}
Summing $j$ from $1$ to $m$ yields
\begin{align*}
\sum_{j=1}^m \m V(\wt\rho_j^{k+1}\otimes\wt\rho_{-j}^k) \geq m\m V(\wt\rho^{k+1}) + (m-1)\sum_{j=1}^m \int_{\m X_j} \Big\langle\nabla V_j^{k+1}, T_{\wt\rho_j^{k+1}}^{\wt\rho_j^k} - \id\Big\rangle\,\dd\wt\rho_j^{k+1}.
\end{align*}
\underline{Step 3.} We have
\begin{align*}
&\quad\,\sum_{j=1}^m \int_{\m X_j} \Big\langle\nabla V_j^{k+1}, T_{\wt\rho_j^{k+1}}^{\wt\rho_j^k} - \id\Big\rangle\,\dd\wt\rho_j^{k+1}\\
&=\sum_{j=1}^m \int_{\m X_j} \Big\langle\frac{T_{\wt\rho_j^{k+1}}^{\wt\rho_j^k} - \id - \eta_j^{k+1}}{\tau}  - \big(\nabla V_j^k + \nabla h_j'(\wt\rho_j^{k+1}) + \nabla_{\W_2}\m W_j(\wt\rho_j^{k+1})\big), T_{\wt\rho_j^{k+1}}^{\wt\rho_j^k} - \id\Big\rangle\\
&\qquad\qquad\qquad\qquad\qquad\qquad\qquad\qquad\qquad\qquad\qquad\qquad\qquad + \Big\langle\nabla V_j^{k+1}, T_{\wt\rho_j^{k+1}}^{\wt\rho_j^k} - \id\Big\rangle\,\dd\wt\rho_j^{k+1}\\
&= \sum_{j=1}^m \bigg[\int_{\m X_j}\Big\langle\nabla V_j^{k+1} - \nabla V_j^k, T_{\wt\rho_j^{k+1}}^{\wt\rho_j^k} - \id\Big\rangle\,\dd\wt\rho_j^{k+1} - \int_{\m X_j}\Big\langle \nabla h_j'(\wt\rho_j^{k+1}), T_{\wt\rho_j^{k+1}}^{\wt\rho_j^k} - \id\Big\rangle\,\dd\wt\rho_j^{k+1}\\
&\quad\qquad\qquad\qquad\qquad\qquad\quad - \int_{\m X_j}\Big\langle \nabla_{\W_2}\m W_j(\wt\rho_j^{k+1}), T_{\wt\rho_j^{k+1}}^{\wt\rho_j^k} - \id\Big\rangle\,\dd\wt\rho_j^{k+1} + \frac{1}{\tau}\W_2^2(\wt\rho_j^{k+1}, \wt\rho_j^k)\\
&\qquad\qquad\qquad\qquad\qquad\qquad\qquad\qquad\qquad\qquad\qquad\qquad - \frac{1}{\tau}\int_{\m X_j}\Big\langle \eta_j^{k+1}, T_{\wt\rho_j^{k+1}}^{\wt\rho_j^k} - \id\Big\rangle\,\dd\wt\rho_j^{k+1}\bigg]\\
&\geq \sum_{j=1}^m \bigg[\int_{\m X_j}\Big\langle\nabla V_j^{k+1} - \nabla V_j^k, T_{\wt\rho_j^{k+1}}^{\wt\rho_j^k} - \id\Big\rangle\,\dd\wt\rho_j^{k+1} - \int_{\m X_j}\Big\langle \nabla h_j'(\wt\rho_j^{k+1}), T_{\wt\rho_j^{k+1}}^{\wt\rho_j^k} - \id\Big\rangle\,\dd\wt\rho_j^{k+1}\\
&\quad\qquad\qquad\qquad\quad - \int_{\m X_j}\Big\langle \nabla_{\W_2}\m W_j(\wt\rho_j^{k+1}), T_{\wt\rho_j^{k+1}}^{\wt\rho_j^k} - \id\Big\rangle\,\dd\wt\rho_j^{k+1} + \frac{1}{2\tau}\W_2^2(\wt\rho_j^{k+1}, \wt\rho_j^k) - \frac{(\varepsilon_j^{k+1})^2}{2\tau}\bigg]\\
&\geq -\sum_{j=1}^m \W_2(\wt\rho_j^{k+1}, \wt\rho_j^k)\cdot\Big\|\nabla V_j^{k+1} - \nabla V_j^k\Big\|_{L^2(\wt\rho_j^{k+1}; \m X_j)} - \big[\m H(\wt\rho^{k}) - \m H(\wt\rho^{k+1})\big]\\
&\qquad\qquad\qquad\qquad\qquad\qquad\qquad\qquad  - \big[\m W(\wt\rho^k) - \m W(\wt\rho^{k+1})\big] + \frac{1}{2\tau}\W_2^2(\wt\rho^{k+1}, \wt\rho^k) - \frac{\|\varepsilon^{k+1}\|^2}{2\tau}\\
&\geq -\sum_{j=1}^m \W_2(\wt\rho_j^{k+1}, \wt\rho_j^k)\cdot L\W_2(\wt\rho_{-j}^{k+1}, \wt\rho_{-j}^k) - \big[\m H(\wt\rho^{k}) - \m H(\wt\rho^{k+1})\big]\\
&\qquad\qquad\qquad\qquad\qquad\qquad\qquad\qquad - \big[\m W(\wt\rho^k) - \m W(\wt\rho^{k+1})\big] + \frac{1}{2\tau}\W_2^2(\wt\rho^{k+1}, \wt\rho^k) - \frac{\|\varepsilon^{k+1}\|^2}{2\tau}\\
&\geq -L \sum_{j=1}^m\bigg[\frac{\sqrt{m-1}}{2}\W_2^2(\wt\rho_j^{k+1}, \wt\rho_j^k) + \frac{1}{2\sqrt{m-1}}\W_2^2(\wt\rho_{-j}^{k+1}, \wt\rho_{-j}^k)\bigg] - \big[\m H(\wt\rho^{k}) - \m H(\wt\rho^{k+1})\big]\\
&\qquad\qquad\qquad\qquad\qquad\qquad\qquad\qquad  - \big[\m W(\wt\rho^k) - \m W(\wt\rho^{k+1})\big] + \frac{1}{2\tau}\W_2^2(\wt\rho^{k+1}, \wt\rho^k) - \frac{\|\varepsilon^{k+1}\|^2}{2\tau}\\
&= \Big(\frac{1}{2\tau} - L\sqrt{m-1}\Big)\W_2^2(\wt\rho^{k+1}, \wt\rho^k) - \big[\m H(\wt\rho^{k}) - \m H(\wt\rho^{k+1})\big] - \big[\m W(\wt\rho^k) - \m W(\wt\rho^{k+1})\big]  - \frac{\|\varepsilon^{k+1}\|^2}{2\tau}.
\end{align*}
\underline{Step 4.} Combining all pieces above yields
\begin{align*}
&\quad\,m\m V(\wt\rho^{k}) + \m H(\wt\rho^k) + \m W(\wt\rho^k) - \m H(\wt\rho^{k+1}) - \m W(\wt\rho^k) - \frac{1}{2\tau}\W_2^2(\wt\rho^{k+1}, \wt\rho^k) + \frac{\|\varepsilon^{k+1}\|^2}{2\tau}\\
& \geq \sum_{j=1}^m \m V(\wt\rho_j^{k+1}\otimes\wt\rho_{-j}^k) 
\geq m\m V(\wt\rho^{k+1}) + (m-1)\sum_{j=1}^m \int_{\m X_j} \Big\langle\nabla V_j^{k+1}, T_{\wt\rho_j^{k+1}}^{\wt\rho_j^k} - \id\Big\rangle\,\dd\wt\rho_j^{k+1}\\
&\geq m\m V(\wt\rho^{k+1}) + (m-1)\bigg[\Big(\frac{1}{2\tau} - L\sqrt{m-1}\Big)\W_2^2(\wt\rho^{k+1}, \wt\rho^k) - \big[\m H(\wt\rho^{k}) - \m H(\wt\rho^{k+1})\big]\\
&\qquad\qquad\qquad\qquad\qquad\qquad\qquad\qquad\qquad\qquad\qquad- \big[\m W(\wt\rho^k) - \m W(\wt\rho^{k+1})\big]  - \frac{\|\varepsilon^{k+1}\|^2}{2\tau}\bigg].
\end{align*}
This implies
\begin{align*}
\m F(\wt\rho^k) - \m F(\wt\rho^{k+1}) \geq \Big(\frac{1}{2\tau} - L\sqrt{m-1}\Big)\W_2^2(\wt\rho^{k+1}, \wt\rho^k) - \frac{\|\varepsilon^{k+1}\|^2}{2\tau}.
\end{align*}

\subsection{Proof of Lemma~\ref{lem: inexact_func_upperbd}}
\underline{Step 1.} We first prove that
\begin{align*}
\sum_{j=1}^m \int_{\m X_j}\big\|\nabla V_j^k + \nabla h_j'(\wt\rho_j^k) + \nabla_{\W_2}\m W_j(\wt\rho_j^k)\big\|^2\,\dd\wt\rho_j^k
\leq \Big(3L^2(m-1) + \frac{3}{\tau^2}\Big)\!\W_2^2(\wt\rho^k, \wt\rho^{k-1}) + \frac{3\|\varepsilon^k\|^2}{\tau^2}.
\end{align*}
Just note that
\begin{align*}
&\quad\,\int_{\m X_j}\big\|\nabla V_j^k + \nabla h_j'(\wt\rho_j^k) + \nabla_{\W_2}\m W_j(\wt\rho_j^k)\big\|^2\,\dd\wt\rho_j^k\\
&= \int_{\m X_j}\Big\|\nabla V_j^k - \nabla V_j^{k-1} + \frac{1}{\tau}\Big(T_{\wt\rho_j^k}^{\wt\rho_j^{k-1}} - \id - \eta_j^k\Big)\Big\|^2\,\dd\wt\rho_j^k\\
&\leq 3\int_{\m X_j}\big\|\nabla V_j^k - \nabla V_j^{k-1}\big\|^2\,\dd\wt\rho_j^k + \frac{3}{\tau^2}\W_2^2(\wt\rho_j^k, \wt\rho_j^{k-1}) + \frac{3(\varepsilon_j^k)^2}{\tau^2}\\
&\leq 3L^2\W_2^2(\wt\rho_{-j}^{k-1}, \wt\rho_{-j}^k) + \frac{3}{\tau^2}\W_2^2(\wt\rho_j^k, \wt\rho_j^{k-1}) + \frac{3(\varepsilon_j^k)^2}{\tau^2}.
\end{align*}
Summing $j$ from $1$ to $m$ yields
\begin{align*}
\sum_{j=1}^m \int_{\m X_j}\big\|\nabla V_j^k + \nabla h_j'(\wt\rho_j^k) + \nabla_{\W_2}\m W_j(\wt\rho_j^k)\big\|^2\,\dd\wt\rho_j^k 
\leq \Big(3L^2(m-1) + \frac{3}{\tau^2}\Big)\W_2^2(\wt\rho^k, \wt\rho^{k-1}) + \frac{3\|\varepsilon^k\|^2}{\tau^2}.
\end{align*}

\noindent\underline{Step 2.} By convexity of $\m F$, we have
\begin{align}
\m F(\wt\rho^k) - \m F(\rho^\ast)
&\leq -\int_{\m X}\Big\langle \nabla\frac{\delta\m F}{\delta\rho}(\wt\rho^k), T_{\wt\rho^k}^{\rho^\ast} - \id\Big\rangle\,\dd\wt\rho^k\nonumber\\
&= -\sum_{j=1}^m \int_{\m X}\Big\langle \nabla_j V + \nabla h_j'(\wt\rho_j^k) + \nabla_{\W_2}\m W_j(\wt\rho_j^k), T_{\wt\rho_j^k}^{\rho_j^\ast}-\id\Big\rangle\,\dd\wt\rho^k\nonumber\\
&= -\sum_{j=1}^m \int_{\m X_j}\Big\langle \nabla V_j^k + \nabla h_j'(\wt\rho_j^k) + \nabla_{\W_2}\m W_j(\wt\rho_j^k), T_{\wt\rho_j^k}^{\rho_j^\ast}-\id\Big\rangle\,\dd\wt\rho_j^k\nonumber\\
&\leq \sum_{j=1}^m \sqrt{\int_{\m X_j}\big\|\nabla V_j^k + \nabla h_j'(\wt\rho_j^k) + \nabla_{\W_2}\m W_j(\wt\rho_j^k)\big\|^2\,\dd\wt\rho_j^k} \cdot \W_2(\wt\rho_j^k, \rho_j^\ast)\nonumber\\
&\leq \sqrt{\sum_{j=1}^m \int_{\m X_j}\big\|\nabla V_j^k + \nabla h_j'(\wt\rho_j^k) + \nabla_{\W_2}\m W_j(\wt\rho_j^k)\big\|^2\,\dd\wt\rho_j^k} \cdot \W_2(\wt\rho^k, \rho^\ast)\nonumber\\
&\leq \sqrt{\Big(3L^2(m-1) + \frac{3}{\tau^2}\Big)\W_2^2(\wt\rho^k, \wt\rho^{k-1}) + \frac{3\|\varepsilon^k\|^2}{\tau^2}} \cdot \W_2(\wt\rho^k, \rho^\ast).\label{eqn: func_value_uppbd}
\end{align}
By $\lambda$-QG condition, we have
\begin{align*}
\frac{\lambda}{2}\W_2^2(\wt\rho^k, \rho^\ast) \leq \m F(\wt\rho^k) - \m F(\rho^\ast) \leq \sqrt{\Big(3L^2(m-1) + \frac{3}{\tau^2}\Big)\W_2^2(\wt\rho^k, \wt\rho^{k-1}) + \frac{3\|\varepsilon^k\|^2}{\tau^2}} \cdot \W_2(\wt\rho^k, \rho^\ast).
\end{align*}
So, we have
\begin{align*}
\W_2(\wt\rho^k, \rho^\ast) \leq \frac{2}{\lambda}\sqrt{\Big(3L^2(m-1) + \frac{3}{\tau^2}\Big)\W_2^2(\wt\rho^k, \wt\rho^{k-1}) + \frac{3\|\varepsilon^k\|^2}{\tau^2}},
\end{align*}
indicating that
\begin{align*}
\m F(\wt\rho^k) - \m F(\rho^\ast) \leq \frac{6}{\lambda}\Big[\Big(L^2(m-1) + \frac{1}{\tau^2}\Big)\W_2^2(\wt\rho^k, \wt\rho^{k-1}) + \frac{\|\varepsilon^k\|^2}{\tau^2}\Big].
\end{align*}

\subsection{Proof of Lemma~\ref{lem: relative_error_cond}}
The proof for two schemes are slightly different. We will consider these two cases separately.

\paragraph{Parallel update scheme.} By Lemma \ref{lem: ot_map}, $T_{\rho_j^{k}}^{\rho_j^{k-1}}-\id = \tau\nabla V_j^{k-1} + \tau\nabla h_j'(\rho_j^{k}) + \tau\nabla_{\W_2}\m W_j(\rho_j^k)$. Therefore, we have
\begin{align*}
\int_{\m X_j}\big\|\nabla V_j^k + \nabla h_j'(\rho_j^k) + \nabla_{\W_2}\m W_j(\rho_j^k)\big\|^2\,\dd\rho_j^k
&= \int_{\m X_j}\Big\|\nabla V_j^k - \nabla V_j^{k-1} + \frac{1}{\tau}\Big(T_{\rho_j^k}^{\rho_j^{k-1}} - \id\Big)\Big\|^2\,\dd\rho_j^k\\
&\stackrel{\ri}{\leq} 2\int_{\m X_j}\big\|\nabla V_j^k - \nabla V_j^{k-1}\big\|^2\,\dd\rho_j^k + \frac{2\W_2^2(\rho_j^k, \rho_j^{k-1})}{\tau^2}\\
&\stackrel{\rii}{\leq} 2L^2\W_2^2(\rho_{-j}^{k-1}, \rho_{-j}^k) + \frac{2\W_2^2(\rho_j^k, \rho_j^{k-1})}{\tau^2}.
\end{align*}
Here, (i) is by Cauchy--Schwarz inequality, and (ii) is by Lemma \ref{lem: diff_potential_w2}. Summing the above inequality from $j=1$ to $m$ yields the desired result.

\paragraph{Sequential update scheme.} For simplicity, let
\begin{align*}
\widetilde V_j^k = \int_{\m X_{-j}}V(x_j, x_{-j})\,\dd\rho_1^{k}\cdots \dd\rho_{j-1}^{k}\,\dd\rho_{j+1}^{k-1}\cdots\dd\rho_m^{k-1}
\end{align*}
be a function on $\m X_j$. By Lemma \ref{lem: ot_map}, we have $T_{\rho_j^{k}}^{\rho_j^{k-1}} - \id = \tau\big[\nabla \widetilde V_j^k + \nabla h_j'(\rho_j^k) + \nabla_{\W_2}\m W_j(\rho_j^k)\big]$. Therefore, we have
\begin{align*}
&\quad\, \sum_{j=1}^m \int_{\m X_j}\big\|\nabla V_j^k + \nabla h_j'(\rho_j^k) + \nabla_{\W_2}\m W_j(\rho_j^k)\big\|^2\,\dd\rho_j^k\\
&= \sum_{j=1}^m \int_{\m X_j}\Big\|\nabla V_j^k - \nabla \widetilde V_j^k + \frac{1}{\tau}\Big(T_{\rho_j^k}^{\rho_j^{k-1}}-\id\Big)\Big\|^2\,\dd\rho_j^k\\
&\stackrel{\ri}{\leq} 2\sum_{j=1}^m \int_{\m X_j}\big\|\nabla V_j^k - \nabla \widetilde V_j^k\big\|^2\,\dd\rho_j^k + \frac{2}{\tau^2}\sum_{j=1}^m\int_{\m X_j}\Big\|T_{\rho_j^k}^{\rho_j^{k-1}}-\id\Big\|^2\,\dd\rho_j^k\\
&\stackrel{\rii}{\leq} 2\sum_{j=1}^m L^2\sum_{l=j+1}^m\W_2^2(\rho_l^k, \rho_l^{k-1}) + \frac{2}{\tau^2}\W_2^2(\rho^k, \rho^{k-1})\\
    &\leq \Big[2L^2(m-1) + \frac{2}{\tau^2} \Big]\W_2^2(\rho^k, \rho^{k-1}).
\end{align*}
Here, (i) is by Cauchy--Schwarz inequality; (ii) is by Lemma \ref{lem: diff_potential_w2}.

\subsection{Proof of Lemma~\ref{lem: relative_error_bound_randomized}}
\begin{proof}
For any $j\in[m]$ and $k\in\mb N$, let $I_{j, k}\in[M]$ be the time of the latest update of the $j$-th coordinate in the $k$-th iteration. By definition, we know $\rho_j^{k-1, M} = \rho_j^{k} = \rho_j^{k-1, I_{j, k}}$ for all $j\in[m]$. Since
\begin{align*}
    \rho_j^{k-1, I_{j, k}} = \argmin_{\rho_j\in\ms P_2^r(\m X_j)} \m V\big(\rho_j, \rho_{-j}^{k-1, I_{j, k}-1}\big) + \m H_j(\rho_j) + \m W_j(\rho_j) + \frac{1}{2\tau}\W_2^2\big(\rho_j, \rho_j^{k-1, I_{j, k}-1}\big),
\end{align*}
Lemma \ref{lem: ot_map} implies 
\begin{align*}
    T_{\rho_j^{k-1, I_{j, k}}}^{\rho_j^{k-1, I_{j, k}-1}} - \id = \tau \Big(\nabla V_j^{k-1, I_{j, k}-1} + \nabla h_j'\big(\rho_j^{k-1, I_{j, k}}\big) + \nabla_{\W_2}\m W_j(\rho_j^{k-1, I_{j, k}})\Big),
\end{align*}
where we let
\begin{align*}
V_j^{k-1, I_{j, k}-1}(x_j) = \int_{\m X_{-j}} V(x_j, x_{-j})\,\dd \rho_{-j}^{k-1, I_{j, k}-1}
\end{align*}
Therefore, we have
\begin{align}
&\quad\,\sum_{j=1}^m\int_{\m X_j}\big\|\nabla V_j^{k} + \nabla h_j'(\rho_j^{k}) + \nabla_{\W_2}\m W_j(\rho_j^{k})\big\|^2\,\dd\rho_j^{k}\nonumber\\
&= \sum_{j=1}^m\int_{\m X_j}\big\|\nabla V_j^{k} + \nabla h_j'(\rho_j^{k-1, I_{j, k}}) + \nabla_{\W_2}\m W_j(\rho_j^{k-1, I_{j, k}})\big\|^2\,\dd\rho_j^{k}\nonumber\\
&= \sum_{j=1}^m\int_{\m X_j}\Big\|\nabla V_j^{k} - \nabla V_j^{k-1, I_{j, k}-1} + \frac{1}{\tau}\Big( T_{\rho_j^{k-1, I_{j, k}}}^{\rho_j^{k-1, I_{j, k}-1}} - \id\Big)\Big\|^2\,\dd\rho_j^{k}\nonumber\\
&\leq 2\sum_{j=1}^m\int_{\m X_j}\big\|\nabla V_j^{k} - \nabla V_j^{k-1, I_{j, k}-1}\big\|^2\,\dd\rho_j^{k} + \frac{2}{\tau^2}\sum_{j=1}^m\int_{\m X_j}\Big\|T_{\rho_j^{k-1, I_{j, k}}}^{\rho_j^{k-1, I_{j, k}-1}} - \id\Big\|^2\,\dd\rho_j^{k}\nonumber\\
&\stackrel{\ri}{\leq} 2L^2\sum_{j=1}^m \W_2^2\big(\rho_{-j}^{k}, \rho_{-j}^{k-1, I_{j, k}-1}\big) + \frac{2}{\tau^2}\sum_{j=1}^m\W_2^2\big(\rho_j^{k-1, I_{j, k}-1}, \rho_j^{k-1, I_{j, k}}\big)\nonumber\\
&= 2L^2\sum_{j=1}^m\W_2^2\big(\rho_{-j}^{k-1, M}, \rho_{-j}^{k-1, I_{j, k}-1}\big) + \frac{2}{\tau^2}\sum_{j=1}^m\W_2^2\big(\rho_j^{k-1, I_{j, k}-1}, \rho_j^{k-1, I_{j,k}}\big)\label{eqn: rand_grad_bound}.
\end{align}
Here, (i) is by Lemma \ref{lem: diff_potential_w2}. In the $k$-th iteration, recall that $j_l\in[m]$ be the coordinate updated in the $l$-th update. The goal is to bound~\eqref{eqn: rand_grad_bound}.

\vspace{0.5em}
\noindent\underline{Bound of the second term in~\eqref{eqn: rand_grad_bound}.} It is obvious that
\begin{align*}
\sum_{j=1}^m \W_2^2\big(\rho_j^{k-1, I_{j, k}-1}, \rho_j^{k-1, I_{j,k}}\big) \leq \sum_{l=0}^{M-1}\W_2^2(\rho_{j_l}^{k-1, l}, \rho_{j_l}^{k-1, l+1}) = \sum_{l=0}^{M-1}\W_2^2(\rho^{k-1, l}, \rho^{k-1, l+1}).
\end{align*}
The first inequality is because each coordinate is updated at least once, and the second equation is because only the $j_l$-th coordinate is updated in the $l$-th update. 

\vspace{0.5em}
\noindent\underline{Bound of the first term in~\eqref{eqn: rand_grad_bound}.} Note that
\begin{align*}
    \sum_{j=1}^m\W_2^2\big(\rho_{-j}^{k-1, M}, \rho_{-j}^{k-1, I_{j, k}-1}\big)
    &\leq \sum_{j=1}^m\W_2^2\big(\rho^{k-1, M}, \rho^{k-1, I_{j, k}-1}\big)
    = \sum_{i=1}^m\sum_{j=1}^m\W_2^2\big(\rho_i^{k-1, M}, \rho_i^{k-1, I_{j, k}-1}\big).
\end{align*}
To bound this sum, define the set of iteration 
\begin{align*}
S_j^{k-1}(a, b) =\{a\leq l\leq b: \rho_j^{k-1, l-1} \neq \rho_j^{k-1, l}\}.
\end{align*}
We know $|S_j^{k-1}(1, M)| \leq T$ since each coordinate is updated at most $T$ times. Thus
\begin{align*}
\sum_{i,j=1}^m\W_2^2\big(\rho_i^{k-1, M}, \rho_i^{k-1, I_{j, k}-1}\big) &\leq \sum_{i,j=1}^m\Big(\sum_{l\in S_i^{k-1}(I_{j, k}, M)}\W_2(\rho_i^{k-1, l-1}, \rho_i^{k-1, l})\Big)^2\\
&\leq \sum_{i, j=1}^m |S_i^{k-1}(I_{j, k}, M)| \cdot\sum_{l\in S_i^{k-1}(I_{j, k}, M)}\W_2^2(\rho_i^{k-1, l-1}, \rho_i^{k-1, l})\\
&\leq T\sum_{i,j=1}^m\sum_{l\in S_i^{k-1}(I_{j, k}, M)}\W_2^2(\rho_i^{k-1, l-1}, \rho_i^{k-1, l})\\
&\leq T\sum_{i, j=1}^m \sum_{l=1}^{M}\W_2^2(\rho_i^{k-1, l-1}, \rho_i^{k-1, l})\\
    &=mT\sum_{l=1}^{M}\W_2^2(\rho^{k-1, l}, \rho^{k-1, l-1}).
\end{align*}
Combining all pieces above yields the desired result.
\end{proof}

\section{Other Technical Lemmas}\label{appendix: lemmas}
\subsection{Proof of Proposition \ref{prop: suff_cond_QG}}
\begin{align*}
&\quad\,\,\m F(\rho_1, \dots, \rho_m) - \m F(\rho_1^\ast, \dots, \rho_m^\ast)
\geq \int_{\m X}\Big\langle \nabla \frac{\delta\m F}{\delta\rho}(\rho^\ast), T_{\rho^\ast}^\rho-\id\Big\rangle\,\dd\rho^\ast + \frac{\lambda}{2}\W_2^2(\rho, \rho^\ast)\\
&= \int_{\m X}\big\langle\nabla V, T_{\rho^\ast}^{\rho} - \id\big\rangle\,\dd\rho^\ast + \sum_{j=1}^m \int_{\m X_j}\big\langle h_j'(\rho_j^\ast) + \nabla_{\W_2}\m W_j(\rho_j^\ast), T_{\rho_j^\ast}^{\rho_j}-\id\big\rangle\,\dd\rho_j^\ast + \frac{\lambda}{2}\sum_{j=1}^m\W_2^2(\rho_j, \rho_j^\ast)\\
&= \sum_{j=1}^m \int_{\m X}\big\langle\nabla_j V, T_{\rho_j^\ast}^{\rho_j}-\id\big\rangle\,\dd\rho^\ast + \sum_{j=1}^m \int_{\m X_j}\big\langle h_j'(\rho_j^\ast) + \nabla_{\W_2}\m W_j(\rho_j^\ast), T_{\rho_j^\ast}^{\rho_j}-\id\big\rangle\,\dd\rho_j^\ast + \frac{\lambda}{2}\sum_{j=1}^m\W_2^2(\rho_j, \rho_j^\ast)\\
&= \sum_{j=1}^m \int_{\m X_j}\big\langle \nabla V_j^\ast + h_j'(\rho_j^\ast) + \nabla_{\W_2}\m W_j(\rho_j^\ast), T_{\rho_j^\ast}^{\rho_j}-\id\big\rangle\,\dd\rho_j^\ast + \frac{\lambda}{2}\sum_{j=1}^m\W_2^2(\rho_j, \rho_j^\ast),
\end{align*}
where we let
\begin{align*}
    V_j^\ast = \int_{\m X_{-j}}V(x_j, x_{-j})\,\dd\rho_{-j}^\ast
\end{align*}
for simplicity. Since $\rho^\ast$ is the stationary point of the optimization problem~\eqref{eqn: obj_func} on the Wasserstein space, we know $\nabla V_j^\ast + h_j'(\rho_j^\ast) + \nabla_{\W_2}\m W_j(\rho_j^\ast) = 0$ (e.g., using Lemma~\ref{lem: relative_error_cond}). This implies
\begin{align*}
    \m F(\rho_1, \dots, \rho_m) - \m F(\rho_1^\ast, \dots, \rho_m^\ast) \geq  \frac{\lambda}{2}\sum_{j=1}^m\W_2^2(\rho_j, \rho_j^\ast).
\end{align*}

\subsection{Proof of Lemma~\ref{lem: ot_map} (first-order optimality condition)}\label{app: proof_FOC}
\begin{proof}
Since $\rho\in\ms P_2^r(\m X)$, by Brenier's theorem (Theorem 2.12 in \citep{villani2021topics}), there is a unique optimal map $\nabla\phi$ from $\rho_\tau$ to $\rho$ and the Kantorovich potential $\phi$ is convex. Then, by Lemma 2.1 in \citep{del2019central} we know $\phi$ is unique up to a constant. By Proposition 7.17 in \citep{santambrogio2015optimal}, we know the first variation of the Wasserstein term is $\frac{\phi}{\tau}$. The first variation of the potential energy term and the self-interaction term are $U$ and $\int W(\cdot, y)\dd\rho_\tau(y) + \int W(y, \cdot)\dd\rho_\tau(y)$ respectively (see the first bullet point of Remark 7.13 in \citep{santambrogio2015optimal}). Next, we will show the first variation of the internal energy functional is $h'(\rho_\tau)$.

Let $\tilde\rho = c = |\m X|^{-1}$ be the constant positive density on $\m X$, and let $\rho_\varepsilon = (1-\varepsilon)\rho_\tau + \varepsilon\tilde\rho$. Similar to Lemma 8.6 in \citep{santambrogio2015optimal}, we have $C_U\varepsilon \geq \varepsilon(\rho_\tau - c)h'(\rho_\varepsilon)$ where $C_U$ is a positive value depending on $U$. Let
\begin{align*}
    A = \{x\in\m X: \rho_\tau(x) > 0\}, \quad B = \{x\in\m X: \rho_\tau(x) = 0\}.
\end{align*}
Then, we have
\begin{align*}
C_U \geq \int_{\m X} (\rho_\tau - c)h'(\rho_\varepsilon) 
&= \int_A(\rho_\tau - c)h'\big((1-\varepsilon)\rho_\tau + \varepsilon c\big) -c|B|h'(\varepsilon c)\label{eqn: 1}\\
&\geq \int_A (\rho_\tau - c)h'(c) - c|B|h'(\varepsilon c).
\end{align*}
Here, we get the last inequality since $h$ is convex and $h'$ in nondecreasing. This implies $\rho_\tau\in L^1(\m X)$. Taking $\tau\to 0$ and applying Fatou's lemma, we have $(\rho_\tau-c)h'(\rho_\tau) \in L^1(\m X)$. Since $h(\rho_\tau)\in L^1(\m X)$ which implies $\rho_\tau h'(\rho_\tau)\in L^1(\m X)$, we know $h'(\rho_\tau)\in L^1(\m X)$. Therefore, for any $\bar\rho\in L^\infty(\m X)$, we have
\begin{align*}
\Big|\frac{\partial}{\partial\varepsilon} h\big((1-\varepsilon)\rho_\tau + \varepsilon\bar\rho\big)\Big| = \Big|h'\big((1-\varepsilon)\rho_\tau + \varepsilon\bar\rho\big)\cdot(\bar\rho - \rho_\tau)\Big| \leq \big(\|\bar\rho\|_\infty + \rho_\tau\big)\max\Big\{|h'(\rho_\tau)|, \|\bar\rho\|_\infty\Big\} \in L^1(\m X).
\end{align*}
Now, by Theorem 2.27 in \citep{folland1999real} we know
\begin{align*}
    \frac{\partial}{\partial\varepsilon}\int_{\m X} h\big((1-\varepsilon)\rho_\tau + \varepsilon\bar\rho\big)\bigg|_{\varepsilon=0} = \int_{\m X} \frac{\partial}{\partial\varepsilon}h\big((1-\varepsilon)\rho_\tau + \varepsilon\bar\rho\big)\bigg|_{\varepsilon=0} = \int_{\m X} h'(\rho_\tau)\,\dd (\bar\rho-\rho_\tau).
\end{align*}
This indicates that $h'(\rho_\tau)$ is a subdifferential.

By Proposition 7.20 in~\cite{santambrogio2015optimal}, we know 
\begin{align*}
    U + h'(\rho_\tau) + \frac{\phi}{\tau} + \int_{\m X}W(\cdot, y)\,\dd\rho_\tau(y) + \int_{\m X}W(y, \cdot)\,\dd\rho_\tau(y)
\end{align*}
is a constant [$\rho_\tau$]-a.e.. Taking the gradient yields the result.
\end{proof}

\subsection{Proof of Lemma~\ref{lem: diff_potential_w2}}
\begin{lemma}\label{lem: diff_potential_w2}
Let $\rho, \rho'\in\ms P_2^r(\m X)$. Under Assumption B, for any $j\in[m]$ we have
\begin{align*}
    \bigg\|\nabla\int_{\m X_{-j}}V(x_j, x_{-j})\,\dd\rho_{-j} - \nabla\int_{\m X_{-j}}V(x_j, x_{-j})\,\dd\rho_{-j}'\bigg\|^2 \leq L^2\W_2^2(\rho_{-j}, \rho_{-j}').
\end{align*}
\end{lemma}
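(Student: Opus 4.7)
The plan is to use the Kantorovich coupling representation of the Wasserstein distance together with the coordinate Lipschitz bound on $\nabla_j V$ from Assumption B. First, I would justify exchanging the gradient with the integral: since $V$ has a bounded gradient on the compact product domain $\m X$ (as implied by Assumption B plus compactness), dominated convergence yields
\begin{equation*}
\nabla\!\int_{\m X_{-j}} V(x_j, x_{-j})\,\dd\rho_{-j}(x_{-j}) = \int_{\m X_{-j}} \nabla_j V(x_j, x_{-j})\,\dd\rho_{-j}(x_{-j}),
\end{equation*}
and similarly for $\rho_{-j}'$.

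Next, I would introduce an optimal coupling $\gamma \in \Pi_o(\rho_{-j}, \rho_{-j}')$ in order to express both integrals on a common probability space. With the marginal conditions $(\pi^0)_\#\gamma = \rho_{-j}$ and $(\pi^1)_\#\gamma = \rho_{-j}'$, we rewrite the difference as a single integral against $\gamma$:
\begin{equation*}
\int_{\m X_{-j}}\!\!\nabla_j V(x_j, y)\,\dd\rho_{-j}(y) - \int_{\m X_{-j}}\!\!\nabla_j V(x_j, y')\,\dd\rho_{-j}'(y') = \int_{\m X_{-j}\times \m X_{-j}}\!\!\bigl[\nabla_j V(x_j, y) - \nabla_j V(x_j, y')\bigr]\,\dd\gamma(y, y').
\end{equation*}

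Then I would apply Jensen's inequality (or equivalently Cauchy--Schwarz against the probability measure $\gamma$) to pull the square inside the integral, and invoke Assumption B pointwise on the integrand:
\begin{align*}
\bigg\|\int \bigl[\nabla_j V(x_j, y) - \nabla_j V(x_j, y')\bigr]\,\dd\gamma\bigg\|^2
&\leq \int \bigl\|\nabla_j V(x_j, y) - \nabla_j V(x_j, y')\bigr\|^2\,\dd\gamma(y, y') \\
&\leq L^2 \int \|y - y'\|^2\,\dd\gamma(y, y') = L^2\,\W_2^2(\rho_{-j}, \rho_{-j}'),
\end{align*}
where the last equality uses that $\gamma$ is an optimal transport plan. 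Combining the three steps yields the claimed inequality. No step here is really an obstacle---the argument is a standard Lipschitz-plus-optimal-coupling computation---the only mild subtlety is checking that the dominated convergence exchange in the first step is valid, which follows from the compactness of $\m X$ assumed throughout the paper together with the Lipschitz continuity of $x_{-j} \mapsto \nabla_j V(x_j, x_{-j})$.
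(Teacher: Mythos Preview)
Your proof is correct and essentially identical to the paper's: both exchange gradient and integral, pass to a common representation via an optimal transport object, apply Jensen/Cauchy--Schwarz, and then invoke Assumption~B pointwise. The only cosmetic difference is that the paper uses the optimal transport \emph{map} $T_{\rho_{-j}}^{\rho'_{-j}}$ (available since $\rho_{-j}\in\ms P_2^r$) where you use an optimal \emph{plan} $\gamma$; your version is marginally more general but the argument is the same.
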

\begin{proof}
Just notice that
\begin{align*}
&\quad\,\bigg\|\nabla\int_{\m X_{-j}}V(x_j, x_{-j})\,\dd\rho_{-j} - \nabla\int_{\m X_{-j}}V(x_j, x_{-j})\,\dd\rho'_{-j}\bigg\|^2\\
&= \bigg\|\nabla\int_{\m X_{-j}}V(x_j, x_{-j})\,\dd\rho_{-j} - \nabla\int_{\m X_{-j}}V(x_j, x_{-j})\,\dd \big(T_{\rho_{-j}}^{\rho'_{-j}}\big)_\# \rho_{-j}\bigg\|^2\\
&= \bigg\|\int_{\m X_{-j}}\nabla_j V(x_j, x_{-j}) - \nabla_j V\big(x_j, T_{\rho_{-j}}^{\rho'_{-j}}(x_{-j})\big)\,\dd\rho_{-j}\bigg\|^2\\
&\stackrel{\ri}{\leq} \int_{\m X_{-j}}\big\|\nabla_j V(x_j, x_{-j}) - \nabla_j V\big(x_j, T_{\rho_{-j}}^{\rho'_{-j}}(x_{-j})\big)\big\|^2\,\dd\rho_{-j}\\
&\stackrel{\rii}{\leq} L^2\int_{\m X_{-j}}\big\|T_{\rho_{-j}}^{\rho'_{-j}}(x_{-j}) - x_{-j}\big\|^2\,\dd\rho_{-j}\\
&= L^2\W_2^2(\rho_{-j}, \rho'_{-j}).
\end{align*}
Here, (i) is by Cauchy--Schwarz inequality; (ii) is by the $L$-smoothness of $V$. 
\end{proof}

\subsection{Tensorization of Wasserstein Distance}
\begin{lemma}[tensorization]\label{lem: tensorization}
Assume $\rho=\bigotimes_{j=1}^m\rho_j$ and $\rho' = \bigotimes_{j=1}^m\rho'_j$, where $\rho_j, \rho_j \in\ms P_2^r(\m X_j)$ for all $j\in[m]$. Then
\begin{align*}
    \W_2^2(\rho, \rho') = \sum_{j=1}^m\W_2^2(\rho_j, \rho'_j)\quad\mbox{and}\quad T_{\mu}^\nu(x) = (T_{\rho_1}^{\rho'_1}(x_1), \dots, T_{\rho_m}^{\rho'_m}(x_m)).
\end{align*}
\end{lemma}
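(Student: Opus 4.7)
\medskip

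The plan is to establish the tensorization identity by constructing an explicit optimal transport map as the product of coordinate-wise optimal maps, then invoking the uniqueness part of Brenier's theorem to deduce both claims simultaneously.

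First, I would prove the easy direction $\W_2^2(\rho,\rho') \leq \sum_{j=1}^m \W_2^2(\rho_j,\rho_j')$ by constructing a candidate transport plan. Since each $\rho_j \in \ms P_2^r(\m X_j)$, Brenier's theorem (cited in Section~\ref{sec: preliminary}) guarantees a unique optimal transport map $T_{\rho_j}^{\rho_j'} = \nabla\psi_j$ for some convex function $\psi_j:\m X_j\to\mb R$. Define the product map $T:\m X\to\m X$ by $T(x) = (T_{\rho_1}^{\rho_1'}(x_1),\ldots,T_{\rho_m}^{\rho_m'}(x_m))$. Because $\rho = \bigotimes_j \rho_j$ and $\rho' = \bigotimes_j \rho_j'$, an immediate change-of-variables gives $T_\#\rho = \rho'$, so $T$ transports $\rho$ to $\rho'$. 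Its quadratic cost is
\begin{align*}
    \int_{\m X} \|T(x)-x\|^2\,\dd\rho(x) = \sum_{j=1}^m \int_{\m X_j} \|T_{\rho_j}^{\rho_j'}(x_j)-x_j\|^2\,\dd\rho_j(x_j) = \sum_{j=1}^m \W_2^2(\rho_j,\rho_j'),
\end{align*}
which provides the $\leq$ direction.

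Next, I would show the reverse inequality by verifying that $T$ is itself an optimal transport map. The key observation is that $T = \nabla\Psi$ where $\Psi(x_1,\ldots,x_m) := \sum_{j=1}^m \psi_j(x_j)$ is a convex function on $\m X$ (convexity is preserved under separable sums). Since $\rho \in \ms P_2^r(\m X)$ as a product of absolutely continuous measures, Brenier's theorem applied to $(\rho,\rho')$ asserts that the \emph{unique} optimal transport map from $\rho$ to $\rho'$ is the gradient of a convex function. Uniqueness of such a map then forces $T_\rho^{\rho'} = T$, which simultaneously establishes the formula $T_\rho^{\rho'}(x) = (T_{\rho_1}^{\rho_1'}(x_1),\ldots,T_{\rho_m}^{\rho_m'}(x_m))$ and the equality $\W_2^2(\rho,\rho') = \int \|T(x)-x\|^2\,\dd\rho = \sum_{j=1}^m \W_2^2(\rho_j,\rho_j')$.

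There is no serious obstacle here; the only small technical point to check carefully is the verification that $\Psi(x) = \sum_{j=1}^m \psi_j(x_j)$ is convex on the product domain $\m X = \bigotimes_{j=1}^m \m X_j$ (immediate from separability) and that $\nabla\Psi$ coincides with the blockwise map $T$ almost everywhere with respect to $\rho$. Once these are in place, the uniqueness clause of Brenier's theorem delivers both claims with no further work. (Note: the statement as displayed writes $T_\mu^\nu$; this should be understood as $T_\rho^{\rho'}$ with the product-measure notation.)
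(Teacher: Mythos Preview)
Your proposal is correct and takes a somewhat different route from the paper. The paper first proves $\W_2^2(\rho,\rho') \geq \sum_j \W_2^2(\rho_j,\rho_j')$ by the elementary swap of minimum and sum (any $\pi\in\Pi(\rho,\rho')$ marginalizes to some $\pi_j\in\Pi(\rho_j,\rho_j')$, so $\min_\pi \sum_j \geq \sum_j \min_{\pi_j}$), and then observes that the product coupling $\bigotimes_j(\id,T_{\rho_j}^{\rho_j'})_{\#}\rho_j$ attains this lower bound, yielding both the distance equality and the map formula. You instead begin with the $\leq$ direction via the same product map and then appeal to Brenier's characterization---specifically its \emph{converse}, that any gradient-of-convex map pushing $\rho$ to $\rho'$ is the optimal one---to certify optimality and identify $T_\rho^{\rho'}$ in one stroke. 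Your route is marginally less elementary (it uses the converse of Brenier, which goes a bit beyond what Section~\ref{sec: preliminary} states explicitly), but it delivers the map formula more directly via uniqueness rather than as a byproduct of saturating an inequality chain. Both arguments are standard and correct.
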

\begin{proof}
By definition, we have
\begin{align*}
    \W_2^2(\rho, \rho') &= \min\bigg\{\int_{\m X^2}\|x-y\|^2\,\dd\pi: \pi\in\Gamma(\rho, \rho')\bigg\}\\
    &= \min\bigg\{\sum_{j=1}^m\int_{\m X_j^2}\|x_j-y_j\|^2\,\dd\pi(x, y): \pi\in\Gamma(\rho, \rho')\bigg\}\\
    &\geq \sum_{j=1}^m\min\bigg\{\int_{\m X_j^2}\|x_j-y_j\|^2\,\dd\pi(x, y): \pi\in\Gamma(\rho, \rho')\bigg\}\\
    &= \sum_{j=1}^m \min\bigg\{\int_{\m X_j^2}\|x_j-y_j\|^2\,\dd\pi_j(x_j, y_j): \pi_j\in\Gamma(\rho_j, \rho'_j)\bigg\} = \sum_{j=1}^m\W_2^2(\rho_j, \rho'_j).
\end{align*}
The inequality holds while taking $\pi = (\id, T_\rho^{\rho'})_\#\rho$. This implies the desired result.
\end{proof}

\subsection{Proof of Proposition~\ref{prop: FA}}
For any $\tilde\rho$, let $\tilde T$ be the optimal map from $\rho^k$ to $\tilde\rho$. Note that
\begin{align*}
&\quad\, \m V(\rho^{k+1}) + \m H(\rho^{k+1}) + \m W(\rho^{k+1}) + \frac{1}{2\tau}\W_2^2(\rho^k, \rho^{k+1})\\
&\leq \m V(T^{k+1}_\#\rho^k) + \m H(T^{k+1}_\#\rho^k) + \m W(T^{k+1}_\#\rho^k) + \frac{1}{2\tau}\int \|T^{k+1}(x) - x\|^2\,\dd\rho^k\\
&\leq \m V(\tilde T_\#\rho^k) + \m H(\tilde T_\#\rho^k) + \m W(\tilde T_\#\rho^k) + \frac{1}{2\tau}\int \|\tilde T(x) - x\|^2\,\dd\rho^k\\
&= \m V(\tilde \rho) + \m H(\tilde \rho) + \m W(\tilde \rho) + \frac{1}{2\tau}\W_2^2(\rho^k, \tilde\rho).
\end{align*}
So, $\rho^{k+1} = T^{k+1}_\#\rho^k$ minimizes~\eqref{eqn: JKO}.

\subsection{Lemmas for Controlling Numerical Error}
\begin{lemma}\label{lem: seq_QG_inexact}
Let $\{x_k\}_{k=0}^\infty$ and $\{\xi_k\}_{k=1}^\infty$ be two non-negative sequences such that
\begin{align*}
x_k \leq A x_{k-1} + B \xi_k,\qquad\forall\, k\in\mb Z_+
\end{align*}
for some constants $0 < A < 1$ and $B > 0$. 

\vspace{0.5em}
\noindent (1) If $\xi_k \leq \kappa \xi^k$ for some constants $0 < \xi < 1$ and $\kappa > 0$, we have
\begin{align*}
x_k \leq A^kx_0 + \frac{B\kappa}{|A-\xi|}\cdot \max\{A, \xi\}^{k+1}.
\end{align*}

\vspace{0.5em}
\noindent (2) If $\xi_k \leq \xi k^{-\alpha}$ for some $\xi, \alpha \geq 0$, there is a constant $C(\alpha, A) > 0$ such that
\begin{align*}
x_k \leq  A^kx_0 + \frac{C(\alpha, A)B\xi}{k^\alpha}.
\end{align*}
\end{lemma}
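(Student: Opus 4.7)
The plan is to iterate the hypothesis $x_k \leq A x_{k-1} + B \xi_k$ a total of $k$ times to obtain the closed form
\begin{align*}
x_k \leq A^k x_0 + B \sum_{j=1}^k A^{k-j} \xi_j,
\end{align*}
which reduces both parts to controlling the convolution-type sum $S_k := \sum_{j=1}^k A^{k-j} \xi_j$ under the two different decay hypotheses on $\xi_j$.

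For part (1), I would substitute $\xi_j \leq \kappa \xi^j$ and evaluate the resulting geometric sum explicitly:
\begin{align*}
S_k \leq \kappa \sum_{j=1}^k A^{k-j} \xi^j = \frac{\kappa\, \xi\, (A^k - \xi^k)}{A - \xi},
\end{align*}
which is valid whenever $A \neq \xi$ (the appearance of $|A-\xi|$ in the target bound signals that this case is assumed implicitly). A one-line case split on whether $A > \xi$ or $A < \xi$ shows $|A^k - \xi^k| \leq \max\{A,\xi\}^k$, and therefore $S_k \leq \kappa \max\{A,\xi\}^{k+1}/|A-\xi|$. Combining with the $A^k x_0$ term delivers the claim.

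For part (2), I would substitute $\xi_j \leq \xi\, j^{-\alpha}$ and split the sum at $j = \lceil k/2 \rceil$ into a low-index and high-index part. The low-index part is dominated by $A^{\lceil k/2 \rceil} \sum_{j=1}^k j^{-\alpha}$; the partial sum $\sum_{j=1}^k j^{-\alpha}$ grows at most polynomially in $k$ (by integral comparison in each regime $\alpha>1$, $\alpha=1$, $0\leq\alpha<1$), so the exponential factor $A^{\lceil k/2 \rceil}$ crushes it to $O(k^{-\alpha})$ uniformly in $k \geq 1$. The high-index part uses the pointwise bound $j^{-\alpha} \leq 2^\alpha k^{-\alpha}$ on $j \geq k/2$ together with $\sum_{j=\lceil k/2\rceil+1}^k A^{k-j} \leq 1/(1-A)$, giving $O(k^{-\alpha})$ directly with an explicit constant. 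Adding the two estimates yields $S_k \leq C(\alpha, A) k^{-\alpha}$ for some $C(\alpha,A) > 0$.

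The main obstacle is purely bookkeeping: in part (2) one must absorb a term of the form $A^{k/2} k^{\max(0, 1-\alpha)}$ into a single constant $C(\alpha, A)$ uniformly in $k \geq 1$, which is routine since $k^\beta A^{k/2}$ is uniformly bounded in $k \geq 1$ for any $\beta \geq 0$ whenever $0 < A < 1$; however, the constant depends on $\alpha$ through the regime of the partial sum of $j^{-\alpha}$, so three sub-cases must be tracked. No new ideas beyond geometric sums and the split-the-sum trick are required, and neither part uses the specific structure of WPCG---the lemma is an autonomous tool for converting per-iteration numerical-error bounds into aggregate convergence rates.
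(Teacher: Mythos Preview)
Your proposal is correct and, for part (1), essentially identical to the paper's argument: iterate the recursion, evaluate the geometric sum, and split on the sign of $A-\xi$.

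For part (2) your route differs from the paper's. The paper writes $S_k = A^k \sum_{l=1}^k A^{-l} l^{-\alpha}$ and then simply cites an external result (Theorem~4 in \cite{yao2023minimizing}) asserting $\sum_{l=1}^k A^{-l} l^{-\alpha} \leq C(\alpha,A) A^{-k} k^{-\alpha}$, without reproducing any argument. Your split-at-$k/2$ approach is a standard and fully self-contained proof of exactly that estimate: the tail $j>k/2$ contributes $2^\alpha k^{-\alpha}/(1-A)$, and the head $j\leq k/2$ contributes $A^{\lfloor k/2\rfloor}$ times a polynomially-growing partial sum, which is $o(k^{-\alpha})$. (Minor bookkeeping: the exponent on the head should be $\lfloor k/2\rfloor$ rather than $\lceil k/2\rceil$, since for $j\leq\lceil k/2\rceil$ one has $k-j\geq\lfloor k/2\rfloor$; this changes nothing.) What you gain is an elementary, citation-free proof; what the paper gains is brevity at the cost of a black-box reference.
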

\begin{proof}
It is easy to see that
\begin{align*}
x_k \leq A^k x_0 + B\sum_{l=1}^k A^{k-l}\xi_l,\qquad\forall\, k\in\mb Z_+.
\end{align*}
(1) When $\xi_k \leq \kappa\xi^k$, we have
\begin{align*}
x_k \leq A^k x_0 + B\sum_{l=1}^kA^{k-l}\kappa\xi^l = A^k x_0 + BA^k\kappa\sum_{l=1}^K\Big(\frac{\xi}{A}\Big)^l.
\end{align*}
Note that
\begin{align*}
\sum_{l=1}^k\Big(\frac{\xi}{A}\Big)^l 
\leq \begin{cases}
\frac{A}{A - \xi} & \xi < A\\
&\\
\frac{\xi^{k+1}}{A^k(\xi - A)} & \xi > A
\end{cases}.
\end{align*}
So, we have
\begin{align*}
BA^k\kappa\sum_{l=1}^k\Big(\frac{\xi}{A}\Big)^l
\leq \begin{cases}
\frac{AB\kappa}{A-\xi} A^k & \xi < A\\
&\\
\frac{B\xi\kappa}{\xi - A} \xi^k & \xi > A
\end{cases}
\,\,\leq \frac{B\kappa}{|A-\xi|}\max\{A, \xi\}^{k+1}.
\end{align*}
Therefore, we have
\begin{align*}
x_k \leq A^k x_0 + \frac{B\kappa}{|A-\xi|}\max\{A, \xi\}^{k+1} \leq \Big(\frac{x_0}{A} + \frac{B\kappa}{|A-\xi|}\Big)\max\{A, \xi\}^{k+1}.
\end{align*}
(2) When $\xi_k \leq \xi k^{-\alpha}$, we have
\begin{align*}
x_k \leq A^k x_0 + B\sum_{l=1}^k A^{k-l}\frac{\xi}{l^\alpha} = A^kx_0 + BA^k\xi\sum_{l=1}^k \frac{A^{-l}}{l^\alpha}.
\end{align*}
Similar to the proof in~\cite[Proof of Theorem 4,][]{yao2023minimizing}, there is a constant $C = C(\alpha, A) > 0$ such that
\begin{align*}
\sum_{l=1}^k \frac{A^{-l}}{l^\alpha} \leq \frac{CA^{-k}}{k^\alpha}.
\end{align*}
So, we have
\begin{align*}
x_k \leq A^kx_0 + BA^k\xi \frac{CA^{-k}}{k^\alpha} = A^kx_0 + \frac{C(\alpha, A)B\xi}{k^\alpha}.
\end{align*}
\end{proof}

\begin{lemma}\label{lem: seq_convex_inexact}
Let $\{x_k\}_{k=0}^\infty$ and $\{\xi_k\}_{k=1}^\infty$ be two non-negative sequences and $\{\xi_k\}$ are non-increasing. Assume that there are constants $A, B, C > 0$ such that
\begin{align*}
A {x_k}^2 + (1-B\xi_k)x_k \leq x_{k-1} + C\xi_k^2.
\end{align*}
If $\xi_k \leq \frac{\xi}{k^\alpha}$ for some $\alpha > 0$, then there exists a constant $K = K(\alpha, \xi, A, B, C) > 0$, such that
\begin{equation}\label{eqn: best_rate}
x_k \leq \frac{K}{k^{\min\{1, \alpha\}}}.
\end{equation}
\end{lemma}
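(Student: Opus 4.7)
My plan is to first establish a uniform bound $x_k \le M$ for all $k$, and then to prove the polynomial rate $x_k \le K/k^\beta$ with $\beta := \min\{1,\alpha\}$ by induction on $k$, choosing $K$ large enough that both the base cases (small $k$, handled via $M$) and the inductive step (large $k$, handled via the self-improving quadratic term $A x_k^2$) close. Throughout, the perturbation quantities $B\xi_k x_k$ and $C\xi_k^2$ are absorbed as errors that vanish with $k$.

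For boundedness, since $\xi_k \to 0$ I fix $k^\star$ with $B\xi_{k^\star} \le 1/2$; for $k \ge k^\star$ the recursion implies $A x_k^2 \le x_{k-1} + C\xi^2$. A dichotomy either yields $x_k \le x_{k-1}$, or else forces $A x_{k-1}^2 < A x_k^2 \le x_{k-1} + C\xi^2$, giving $x_{k-1} \le M_0 := (1+\sqrt{1+4AC\xi^2})/(2A)$; thus $x_k \le \max\{x_{k-1}, \sqrt{(M_0+C\xi^2)/A}\}$, which iterates to a uniform bound $M$ depending on $x_0$, $M_0$, and the finite initial segment $x_1,\dots,x_{k^\star-1}$ (themselves bounded by a direct finite-step application of the same quadratic inequality).

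For the induction step, assume $x_{k-1} \le K/(k-1)^\beta$ and, for contradiction, suppose $x_k > K/k^\beta$. Using monotonicity of $t \mapsto A t^2 + (1-B\xi_k) t$ on $t > 0$ (once $1-B\xi_k > 0$) together with the elementary estimate $K/(k-1)^\beta - K/k^\beta \le 4\beta K/k^{\beta+1}$ (valid for $\beta \le 1$, $k \ge 2$), and then multiplying through by $k^{2\beta}$, gives
\begin{align*}
A K^2 \;\le\; 4\beta K\, k^{\beta-1} \;+\; B K \xi\, k^{\beta-\alpha} \;+\; C \xi^2\, k^{2\beta-2\alpha}.
\end{align*}
When $\alpha \ge 1$, so $\beta = 1$, each of the exponents $\beta-1,\beta-\alpha,2\beta-2\alpha$ is nonpositive, and the right-hand side is bounded by $4K + BK\xi + C\xi^2$; when $\alpha < 1$, so $\beta = \alpha$, the last two exponents vanish and the first is negative, so for $k \ge k_0$ the right-hand side is bounded by $BK\xi + C\xi^2 + 1$. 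In either regime, choosing $K$ strictly larger than the positive root of the corresponding quadratic (in $K$) and additionally $K \ge k_0^\beta M$ to cover the base cases $k \le k_0$ yields the contradiction, closing the induction. The main obstacle is unifying these two regimes: the self-improvement afforded by $Ax_k^2$ must dominate two different error scales, and the coefficient $(1-B\xi_k)$ must be transposed carefully enough to keep the controlling quadratic convex so that ``$x_k > K/k^\beta$'' actually produces a sharper inequality than the hypothesis yields.
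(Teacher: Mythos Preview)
Your approach is essentially the paper's: both argue by induction on $k$, exploiting the quadratic term $Ax_k^2$ to close the step once $K$ is taken large enough, and both handle the base cases by invoking finiteness of an initial segment. You are somewhat more explicit about the uniform bound $M$, whereas the paper simply asserts that $K$ can be taken large enough to cover $k\le k_0$; and you frame the inductive step as a contradiction while the paper solves the quadratic formula directly. These are cosmetic differences.

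There is, however, a small circularity in your treatment of the case $\alpha<1$. You write that for $k\ge k_0$ the right-hand side is bounded by $BK\xi+C\xi^2+1$, absorbing the term $4\beta K\,k^{\beta-1}=4\alpha K\,k^{\alpha-1}$ into the constant $1$. But forcing $4\alpha K\,k^{\alpha-1}\le 1$ requires $k_0\gtrsim K^{1/(1-\alpha)}$, and you then also demand $K\ge k_0^\beta M$ for the base cases; for $\alpha\ge 1/2$ these two constraints cannot both be satisfied by taking $K$ large. The fix is immediate and is exactly what the paper does: since $\alpha-1<0$ and $k\ge 1$, simply bound $k^{\alpha-1}\le 1$ to get $4\alpha K\,k^{\alpha-1}\le 4\alpha K$, so the inequality to violate becomes $AK^2\le (4\alpha+B\xi)K+C\xi^2$. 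Now the threshold for $K$ is independent of $k_0$, $k_0$ can be chosen once and for all (just to ensure $1-B\xi_{k_0}>0$), and the base-case requirement $K\ge k_0^\beta M$ no longer conflicts with the inductive one.
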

\begin{proof}
Without loss of generality, we assume $\xi_k \leq \frac{1}{k^\alpha}$. Otherwise, we have
\begin{align*}
A {x_k}^2 + \big(1-\frac{B\xi}{k^\alpha}\big)x_k \leq x_{k-1} + \frac{C\xi^2}{k^{2\alpha}}, 
\end{align*}
and then take $B' = B\xi, C' = C\xi^2$, and $\xi_k' = 1/k^\alpha$.

We can take $K$ large enough such that inequality~\eqref{eqn: best_rate} is true for all $k \geq k_0$, where $k_0 \geq 1$ is a constant such that $Bk_0^{-\alpha} < 1$. We will use induction to prove~\eqref{eqn: best_rate}. Assume the inequality is true for $k-1 \geq k_0$. Now let us prove~\eqref{eqn: best_rate} holds for $k$. Then, we have
\begin{align*}
x_k \leq \frac{-(1-B\xi_k) + \sqrt{(1-B\xi_k)^2 + 4A(x_{k-1} + C\xi_k^2)}}{2A}.
\end{align*}
Since $1 - B\xi_k \geq 0$, we have
\begin{align*}
x_k \leq \frac{K}{k^{\min\{1, \alpha\}}}
&\Longleftarrow \frac{-(1-B\xi_k) + \sqrt{(1-B\xi_k)^2 + 4A(x_{k-1} + C\xi_k^2)}}{2A} \leq \frac{K}{k^{\min\{1, \alpha\}}}\\
&\Longleftrightarrow (1-B\xi_k)^2 + 4A(x_{k-1}+C\xi_k^2) \leq \Big(\frac{2AK}{k^{\min\{1, \alpha\}}} + (1-B\xi_k)\Big)^2\\
&\Longleftrightarrow x_{k-1} \leq \frac{AK^2}{k^{2\min\{1,\alpha\}}} + \frac{K(1-Bk^{-\alpha})}{k^{\min\{1, \alpha\}}} - \frac{C}{k^{2\alpha}}\\
&\Longleftrightarrow \frac{K}{(k-1)^{\min\{1, \alpha\}}} \leq \frac{AK^2}{k^{2\min\{1,\alpha\}}} + \frac{K(1-Bk^{-\alpha})}{k^{\min\{1, \alpha\}}} - \frac{C}{k^{2\alpha}}\\
&\Longleftrightarrow  \frac{K}{(k-1)^{\min\{1, \alpha\}}} - \frac{K}{k^{\min\{1, \alpha\}}} \leq \frac{AK^2}{k^{2\min\{1,\alpha\}}} - \frac{BK}{k^{\alpha + \min\{1, \alpha\}}} - \frac{C}{k^{2\alpha}}.
\end{align*}
\underline{Case 1.} When $\alpha > 1$, we only need to prove
\begin{align*}
\frac{K}{k-1} - \frac{K}{k} \leq \frac{AK^2}{k^2} - \frac{BK}{k^{1+\alpha}} - \frac{C}{k^{2\alpha}}
&\Longleftarrow \frac{K}{k(k-1)} \leq \frac{AK^2 - BK - C}{k^2},
\end{align*}
which only requires $AK^2 - (B+2)K - C \geq 0$ since $k\geq k_0 + 1 \geq 2$.

\noindent\underline{Case 2.} When $\alpha \leq 1$, we only need to show
\begin{align*}
\frac{K}{(k-1)^\alpha} - \frac{K}{k^\alpha} \leq \frac{AK^2 - BK - C}{k^{2\alpha}}.
\end{align*}
Note that
\begin{align*}
k^\alpha\Big[\frac{1}{(k-1)^\alpha} - \frac{1}{k^\alpha}\Big] = \Big(1 + \frac{1}{k-1}\Big)^\alpha - 1 \leq 2^{(\alpha-1)_+}\frac{\alpha}{k-1} \leq \frac{2^\alpha\alpha}{k}\qquad\forall\,k\geq 2,
\end{align*}
So, we only need to prove
\begin{align*}
\frac{2^\alpha\alpha K}{k} \leq \frac{AK^2 - BK - C}{k^\alpha}.
\end{align*}
Since $\alpha < 1$, we just need $AK^2 - (B+2^\alpha\alpha)K - C \geq 0$. By induction, we know such $K$ exists.
\end{proof}

\subsection{Numerical Methods in FA Approach}\label{app: FA_detail}
By changing variables, we can write the interaction and self-interaction functionals as
\begin{align*}
\m V(T_\#\rho^k) = \int V(x)\,\dd [T_\#\rho^k](x) = \int V(T(x))\,\dd\rho^k(x),
\end{align*}
and
\begin{align*}
\m W(T_\#\rho^k) = \int\!\!\int W(x, x')\,\dd[T_\#\rho^k](x)\dd[T_\#\rho^k](x') = \int\!\!\int W(T(x), T(x'))\,\dd\rho^k(x)\dd\rho^k(x').
\end{align*}
These expressions suggest the following sample approximations 
\begin{align*}
\m V(T_\#\rho^k) \approx \frac{1}{B}\sum_{b=1}^B V(T(X_b^k))
\quad\mx{and}\quad 
\m W(T_\#\rho^k)\approx \frac{1}{B^2}\sum_{b, b'=1}^B W\big(T(X_b^k), T(X_{b'}^k)\big),
\end{align*}
where $\{X_b^{k}: b\in[B]\}$ are $B$ samples from $\rho^k$. Regarding the internal energy term $\m H(\rho^k)$, we will consider two leading cases $\m H(\rho^k) = \int\rho_k\log\rho_k$ and $\m H(\rho^k) = \int(\rho^k)^{n}$ with some positive integer $n \geq 2$.

For $\m H(\rho^k) = \int\rho^k\log\rho^k$, we can apply the argument in~\citep{mokrov2021large} by noting that
\begin{align*}
\m H(T_\#\rho^k) = \m H(\rho^k) - \int \log\,\lvert\det \nabla T(x)\rvert\,\dd\rho^k,
\end{align*}
which suggests approximating the negative self-entropy functional by
\begin{align*}
\m H(T_\#\rho^k) \approx \m H(\rho^k) - \frac{1}{B} \sum_{b=1}^B \log\,\lvert\det\nabla T(X_b^k)\rvert
\end{align*}
Since the first term does not depend on $T$, the above discussions yield the approximation in Example~\ref{ex: negative_entropy}

For $\m H(\rho^k) = \int(\rho^k)^n$, an additional kernel density estimation (KDE) step is required. Since
\begin{align*}
\m H(T_\#\rho^k) = \int \big[\rho^k(x)\cdot\lvert\det\nabla T(x)\rvert^{-1}\big]^{n-1}\,\dd\rho^k,
\end{align*}
which suggests that we consider the approximation
\begin{align*}
\m H(T_\#\rho^k) \approx \frac{1}{B}\sum_{b=1}^B \big[\wht\rho_{\rm kde}^k(X_b^k)\cdot\lvert\det\nabla T(X_b^k)\rvert^{-1}\big]^{n-1},
\end{align*}
where $\wht\rho_{\rm kde}^k$ is the kernel density estimation of $\rho^k$ by $\{X_b^k: b\in[B]\}$. This leads to the approximation in Example \ref{ex: porous_medium}.

\section{Results for Multi-species Systems}
\subsection{Proof of Proposition \ref{lem: multi_species_system}}
Since $\rho^\ast = (\rho_1^\ast, \dots, \rho_m^\ast)$ is the minimum of~\eqref{eqn: obj_func}, by Lemma \ref{lem: ot_map}, we have
\begin{align*}
    C_j(\rho_{-j}^\ast) &= \int_{\m X_{-j}}V(x_j, x_{-j})\,\dd\rho_{-j}^\ast + h_j'(\rho_j^\ast) + \int_{\m X_j} W_j(\cdot, y)\,\dd\rho_j^\ast(y) + \int_{\m X_j} W_j(y, \cdot)\,\dd\rho_j^\ast(y)\\
    &= \Big(V_j - \sum_{i\neq j} K_{ij}\ast\rho_i^\ast\Big) + h_j'(\rho_j^\ast) - K_{jj}\ast\rho_j^\ast
\end{align*}
is a constant [$\rho_j^\ast$]-a.e. on $\m X_j$. If $\rho(t) = \rho^\ast$ at time $t=0$, we have
\begin{align}
\rho_j(t)\, \Big[\,\nabla V_j(t) - \sum_{i=1}^m (\nabla K_{ij})\ast\rho_i + \nabla h_j'(\rho_j)\,\Big] = 0.
\end{align}
This implies that $\rho^\ast$ is a stationary measure.

\subsection{Example in Section \ref{subsec:multi-species_systems}}\label{appendix: multi_species}
Recall that $h_j(\rho_j) = \rho_j\log\rho_j$,
\begin{align*}
    V(x_1, x_2, x_3) = \sum_{i=1}^n \frac{r_i}{2}\|x_i - m_i\|^2 - \sum_{1\leq i<j\leq 3}\frac{Q_iQ_j}{2}\arctan\|x_i-x_j\|^2
\end{align*}
and 
\begin{align*}
    W_j(x_j, x_j') = \frac{Q_j^2}{2}\arctan\|x_j - x_j'\|^2,\quad 1\leq j\leq 3.
\end{align*}
Let us now check all assumptions in Section \ref{sec: WPCG}.

\noindent\underline{Assumption A.} It is clear that $\rho_j^2$ satisfies the Assumption A.

\vspace{0.5em}
\noindent\underline{Assumption B.} Let $R(y) = y / (1+\|y\|^2)$ be a function on $\mb R^2$. For any $x = (x_1, x_2, x_3)$ and $x' = (x_1', x_2', x_3')$, we have
\begin{align*}
\big\|\nabla_1 V(x) - \nabla_1 V(x')\big\|
&\lesssim \big\|r_1(x_1 - x_1')\big\| + |Q_1Q_2|\cdot\Big\|\frac{x_1 - x_2}{1 + \|x_1 - x_2\|^2} - \frac{x_1' - x_2'}{1 + \|x_1' - x_2'\|^2}\Big\|\\
&\qquad\qquad\qquad\qquad + |Q_1Q_3|\cdot\Big\|\frac{x_1 - x_3}{1 + \|x_1 - x_3\|^2} - \frac{x_1' - x_3'}{1 + \|x_1' - x_3'\|^2}\Big\|\\
&= |r_1|\cdot\|x_1 - x_1'\| + |Q_1Q_2|\cdot \big\|R(x_1 - x_2) - R(x_1' - x_2')\big\|\\
&\qquad\qquad\qquad\qquad\qquad + |Q_1Q_3|\cdot\big\|R(x_1 - x_3) - R(x_1'-x_3')\big\|.
\end{align*}
Notice that the Jacobian matrix of $R$ at $y = (y_1, y_2)$ is
\begin{align*}
    JR(y) = \frac{1}{(1+y_1^2 + y_2^2)^2}
    \begin{pmatrix}
    1 + y_2^2 - y_1^2 & -2y_1y_2\\
    -2y_1y_2 & 1 + y_1^2 - y_2^2
    \end{pmatrix}
\end{align*}
with eigenvalues
\begin{align*}
    \lambda_1 = \frac{1}{1+y_1^2 + y_2^2}, \quad\mx{and}\quad \lambda_2 = \frac{1 - y_1^2 - y_2^2}{(1+y_1^2 + y_2^2)^2}.
\end{align*}
Clearly, we have $|\lambda_1| \leq 1$ and $|\lambda_2| \leq 1$. This implies $\|R(y) - R(y')\| \leq \|y - y'\|$. Therefore, we have
\begin{align*}
    \big\|\nabla_1 V(x) - \nabla_1 V(x')\big\|
    &\lesssim \|x_1 - x_1'\| + \|(x_1-x_2) - (x_1'-x_2')\| + \|(x_1-x_3) 
    - (x_1'-x_3')\|\\
    &\lesssim \|x - x'\|.
\end{align*}
This indicates that 
\begin{align*}
    \big\|\nabla V(x) - \nabla V(x')\big\| \lesssim \sum_{i=1}^3\big\|\nabla_i V(x) - \nabla_i V(x')\big\| \lesssim \|x-x'\|.
\end{align*}
So, $\nabla V$ is Lipschitz.

\vspace{0.5em}
\noindent\underline{Assumption C.} To verify this, we first prove that when $\alpha_j = \beta_j = Q_j^2$,
\begin{align*}
    \widetilde W_j(x_j, x_j') = \frac{\alpha_j}{2}\|x_j\|^2 + W_j(x_j, x_j') + \frac{\beta_j}{2}\|x_j'\|^2 = \frac{\alpha_j}{2}\|x_j\|^2 + \frac{Q_j^2}{2}\arctan\|x_j-x_j'\|^2 + \frac{\beta_j}{2}\|x_j'\|^2
\end{align*}
is a convex function on $\mb R^2\times\mb R^2$. To show this, notice that
\begin{align*}
    \frac{\partial W_j}{\partial x_j} = Q_j^2R(x_j - x_j')\quad\mx{and}\quad \frac{\partial W_j}{\partial x_j'} = Q_j^2R(x_j' - x_j).
\end{align*}
Therefore, the Hessian of $W_j$ is
\begin{align*}
\nabla^2 W_j = Q_j^2\begin{pmatrix}
    JR(x_j - x_j') & -JR(x_j - x_j')\\
    -JR(x_j - x_j') & JR(x_j - x_j')
\end{pmatrix}.
\end{align*}
Let
\begin{align*}
    \eta_{j1} = \frac{1}{1 + (x_{j1} - x_{j1}')^2 + (x_{j2} - x_{j2}')^2}\quad\mx{and}\quad\eta_{j2} = \frac{1 - (x_{j1} - x_{j1}')^2 - (x_{j2} - x_{j2}')^2}{(1 + (x_{j1} - x_{j1}')^2 + (x_{j2} - x_{j2}')^2)^2}
\end{align*}
be the eigenvalues of $JR(x_j-x_j')$. Assume $P\in O(2)$ be the orthogonal transition matrix such that
\begin{align*}
    JR(x_j - x_j') = P\begin{pmatrix}
        \eta_{j1} & 0\\
        0 & \eta_{j2}.
    \end{pmatrix} P^{\rm T}.
\end{align*}
Therefore, we have
\begin{align*}
\begin{pmatrix}
    P^{\rm T} & 0\\
    0 & P^{\rm T}
\end{pmatrix} 
\nabla^2\widetilde W_j
\begin{pmatrix}
    P & 0\\
    0 & P
\end{pmatrix}
&= \begin{pmatrix}
\alpha_j + Q_j^2\eta_{j1} & 0 & -\eta_{j1} & 0\\
0 & \alpha_j + Q_j^2\eta_{j2} & 0 & -\eta_{j2}\\
-\eta_{j1} & 0 & \beta_j + Q_j^2\eta_{j1} & 0\\
0 & -\eta_{j2} & 0 & \beta_j + Q_j^2\eta_{j2}
\end{pmatrix},
\end{align*}
which is p.s.d. when $\alpha_j = \beta_j \geq -Q_j^2\min\{0, \eta_{j1}, \eta_{j2}\}$. We can then take $\alpha_j = \beta_j = Q_j^2$ since $\eta_{j1}, \eta_{j2} > -1$.

Next, we will show that
\begin{align*}
    \widetilde V(x) &= V(x) - \sum_{i=1}^3 Q_i^2\|x_i\|^2\\
    &= \sum_{i=1}^n \Big(\frac{r_i}{2}\|x_i - m_i\|^2 - Q_i^2\|x_i\|^2\Big) - \sum_{1\leq i<j\leq 3}\frac{Q_iQ_j}{2}\arctan\|x_i-x_j\|^2
\end{align*}
is $\lambda$-convex for $\lambda = \min_{1\leq i\leq 3}\Big(r_i - 4Q_i^2 - |Q_i|\sum_{j\neq i}|Q_j|\Big)$. Let $H_{ij} = JR(x_i - x_j)$ for simplicity. It is easy to see that
\begin{align*}
    \frac{\partial^2\widetilde V}{\partial x_i^2} = (r_i-2Q_i^2)I_2 - Q_i\sum_{l\neq i}Q_lH_{il}\quad\mx{and}\quad
    \frac{\partial\widetilde V}{\partial x_i\partial x_j} = Q_iQ_jH_{ij}, \quad\forall\, 1\leq i\neq j\leq 3.
\end{align*}
Therefore, we have
\begin{align*}
&\quad\,
\begin{pmatrix}
    y_1^{\rm T} & y_2^{\rm T} & y_3^{\rm T}
\end{pmatrix}
\nabla^2\widetilde V(x)
\begin{pmatrix}
    y_1\\ y_2\\ y_3
\end{pmatrix}\\
&= \sum_{i=1}^3 (r_i-2Q_i^2)\|y_i\|^2 - \sum_{i=1}^3Q_i\sum_{j\neq i}Q_j y_i^{\rm T} H_{ij}y_i  + 2\sum_{1\leq i<j\leq 3}Q_iQ_j y_i^{\rm T}H_{ij}y_j\\
&\geq \sum_{i=1}^3 (r_i-2Q_i^2)\|y_i\|^2 - \sum_{i=1}^3\sum_{j\neq i} |Q_iQ_j|\matnorm{H_{ij}}\|y_i\|^2 - 2\sum_{1\leq i<j\leq 3}|Q_iQ_j|\matnorm{H_{ij}}\|y_i\|\|y_j\|\\
&\stackrel{\ri}{\geq} \sum_{i=1}^3\Big(r_i-2Q_i^2 - |Q_i|\sum_{j\neq i}|Q_j|\Big)\|y_i\|^2 - 2\sum_{1\leq i<j\leq 3}|Q_iQ_j|\|y_i\|\|y_j\|\\
&\stackrel{\rii}{\geq} \sum_{i=1}^3 \Big(r_i - 4Q_i^2 - |Q_i|\sum_{j\neq i}|Q_j|\Big)\|y_i\|^2\\
&\geq \min_{1\leq i\leq 3}\Big(r_i - 4Q_i^2 - |Q_i|\sum_{j\neq i}|Q_j|\Big)\|y\|^2
\end{align*}
Here, (i) is because $\matnorm{H_{ij}}\leq 1$; (ii) is due to AM-GM inequality. This implies $\widetilde V$ is $\min_{1\leq i\leq 3}\Big(r_i - 4Q_i^2 - |Q_i|\sum_{j\neq i}|Q_j|\Big)$-convex.

\section{Connections between Different Lipschitz Conditions}\label{app: connection}
First, let us show $0 \leq L/L_c \leq \sqrt{m-1}$. The lower bound is because $L$ and $L_c$ are non-negative. For the upper bound, notice that $L = \max_{j\in[m]}\sup_{x}\matnorm{\nabla_j\nabla_{-j} V(x)}$ and $L_c = \sup_{x}\matnorm{\nabla_j^2 V(x)}$. Also, when $\nabla_j^2 V$ is invertible, we have
\begin{align*}
    0 \preceq \begin{pmatrix}
        \nabla_{-j}^2 V & \nabla_j\nabla_{-j}V\\
        \nabla_{-j}\nabla_j V & \nabla_j^2 V
    \end{pmatrix}
    \sim\begin{pmatrix}
        \nabla_{j}^2 V & 0\\
        0 & \nabla_{-j}^2V - \nabla_j\nabla_{-j}V (\nabla_j^2V)^{-1}\nabla_{-j}\nabla_j V
    \end{pmatrix},
\end{align*}
where $A\sim B$ means $A$ and $B$ are congruent, i.e., there exists invertible matrix $P$ such that $A = P^TBP$. It is known that congruent transformation does not change the semi-positive definiteness of a matrix. This implies $0 \preceq \nabla_j^2V - \nabla_j\nabla_{-j}V (\nabla_j^2V)^{-1}\nabla_{-j}\nabla_j V$, i.e.,
\begin{align*}
    v^T \nabla_{-j}^2V v \geq v^T \nabla_j\nabla_{-j}V (\nabla_j^2V)^{-1}\nabla_{-j}\nabla_j V v,\quad \forall\, v\in\mb R^{d-d_j}.
\end{align*}
Therefore, we have
\begin{align*}
    \matnorm{\nabla_{-j}^2 V} &= \sup_{\|v\|=1}v^T \nabla_{-j}^2V v\\
    &\geq \sup_{\|v\|=1} v^T \nabla_j\nabla_{-j}V (\nabla_j^2V)^{-1}\nabla_{-j}\nabla_j V v\\
    &\geq \sup_{\|v\|=1} \matnorm{\nabla_j^2 V}^{-1}\|\nabla_{-j}\nabla_j Vv\|^2\\
    &= \matnorm{\nabla_j^2 V}^{-1}\matnorm{\nabla_{-j}\nabla_j V}^2.
\end{align*}
This implies
\begin{align*}
    L^2 = \sup_x \matnorm{\nabla_{-j}\nabla_j V(x)}^2 &\leq \sup_x \matnorm{\nabla_{-j}^2 V(x)}\matnorm{\nabla_j^2 V(x)}\\
    &\leq \sup_x \matnorm{\nabla_{-j}^2 V(x)} \sup_x\matnorm{\nabla_j^2 V(x)}\\
    &\stackrel{\ri}{\leq} (m-1)\max_{i\neq j}\sup_x\matnorm{\nabla_i^2 V(x)}\sup_x\matnorm{\nabla_j^2 V(x)}\\
    &\leq (m-1)\Big(\max_{j\in[m]}\sup_x\matnorm{\nabla_j^2V(x)}\Big)^2\\
    &= (m-1) L_c^2.
\end{align*}
Here, (i) is due to Section 3.2 in \citep{wright2015coordinate}. Thus, we have $L / L_c \leq \sqrt{m-1}$.
To see the bounds are tight, consider the examples $V(x_1, x_2) = x_1^2 + x_2^2$ and $V(x_1, \cdots, x_n) = (x_m + \cdots, x_m)^2$. In the first example, we have $L = 0$ and $L_c = 2$, indicating that $L/L_c = 0$. In the second example, we have $L = 2\sqrt{m-1}$ and $L_c = 2$, indicating that $L / L_c = \sqrt{m-1}$.

Next, we will prove that $0 \leq L / L_r \leq \sqrt{1-1/m}$. The lower bound is due to $L, L_r \geq 0$ and is tight due to the same reason as of $L / L_c$. To see the upper bound, note that $L_r = \max_{j\in[m]}\matnorm{\nabla_j\nabla V}$. Therefore, we have
\begin{align*}
L_r^2 &= \max_{j\in[m]}\sup_{\|v_1\|^2 + \|v_2\|^2=1} \|\nabla_{-j}\nabla_j Vv_1 + \nabla_j^2 Vv_2\|^2\\
&\stackrel{\ri}{=} \max_{j\in[m]} \sup_{\|v_1\|^2 + \|v_2\|^2 = 1} \Big(\|\nabla_{-j}\nabla_j Vv_1\| + \|\nabla_j^2 Vv_2\|\Big)^2\\
&\stackrel{\rii}{=} \max_{j\in[m]}\sup_{\|v_1\|^2 + \|v_2\|^2=1}\Big(\matnorm{\nabla_{-j}\nabla_j V}\|v_1\| + \matnorm{\nabla_j^2 V}\|v_2\|\Big)^2\\
&\stackrel{\riii}{\geq} \sup_{\|v_1\|^2+\|v_2\|^2 = 1}\Big(L\|v_1\| + \frac{L}{\sqrt{m-1}}\|v_2\|\Big)^2\\
&= \frac{mL^2}{m-1}.
\end{align*}
Here, (i) is because we can rotate $v_2$ such that $\nabla_{-j}\nabla_j Vv_1$ and $\nabla_j^2 Vv_2$ have the same direction; (ii) is by the definition of operator norm; (iii) is by $L_c \geq L/\sqrt{m-1}$. Thus, we have $L/L_r \leq \sqrt{1 - 1/m}$. The tightness of this bound can been seen by considering $V(x) = (x_1 + \cdots + x_n)^2$.

Finally, let us show $0\leq L/L_{\rm g} \leq 1$. This inequality is obviously true. The lower bound is tight by considering $V(x) = x_1^2 + \cdots + x_m^2$. For the upper bound, consider the example $V(x) = (x_1+x_2)^2 + x_3^2 + \cdots + x_m^2$. In this example, we have $L = L_{\rm g} = 2$. Therefore, $L / L_{\rm g}\leq 1$ is the tightest bound.

\bibliography{main}

\begin{thebibliography}{57}
\providecommand{\natexlab}[1]{#1}
\providecommand{\url}[1]{\texttt{#1}}
\expandafter\ifx\csname urlstyle\endcsname\relax
  \providecommand{\doi}[1]{doi: #1}\else
  \providecommand{\doi}{doi: \begingroup \urlstyle{rm}\Url}\fi

\bibitem[Ambrosio and Savar{\'e}(2007)]{ambrosio2007gradient}
Luigi Ambrosio and Giuseppe Savar{\'e}.
\newblock Gradient flows of probability measures.
\newblock \emph{Handbook of differential equations: evolutionary equations}, 3:\penalty0 1--136, 2007.

\bibitem[Ambrosio et~al.(2005)Ambrosio, Gigli, and Savar{\'e}]{ambrosio2005gradient}
Luigi Ambrosio, Nicola Gigli, and Giuseppe Savar{\'e}.
\newblock \emph{Gradient flows: in metric spaces and in the space of probability measures}.
\newblock Springer Science \& Business Media, 2005.

\bibitem[Amos et~al.(2017)Amos, Xu, and Kolter]{pmlr-v70-amos17b}
Brandon Amos, Lei Xu, and J.~Zico Kolter.
\newblock Input convex neural networks.
\newblock In Doina Precup and Yee~Whye Teh, editors, \emph{Proceedings of the 34th International Conference on Machine Learning}, volume~70 of \emph{Proceedings of Machine Learning Research}, pages 146--155. PMLR, 06--11 Aug 2017.
\newblock URL \url{https://proceedings.mlr.press/v70/amos17b.html}.

\bibitem[Arnese and Lacker(2024)]{arnese2024convergence}
Manuel Arnese and Daniel Lacker.
\newblock Convergence of coordinate ascent variational inference for log-concave measures via optimal transport.
\newblock \emph{arXiv preprint arXiv:2404.08792}, 2024.

\bibitem[Aronson(2006)]{aronson2006porous}
Donald~G Aronson.
\newblock The porous medium equation.
\newblock \emph{Nonlinear Diffusion Problems: Lectures given at the 2nd 1985 Session of the Centro Internazionale Matermatico Estivo (CIME) held at Montecatini Terme, Italy June 10--June 18, 1985}, pages 1--46, 2006.

\bibitem[Ballu and Berthet(2022)]{ballu2022mirror}
Marin Ballu and Quentin Berthet.
\newblock Mirror sinkhorn: Fast online optimization on transport polytopes.
\newblock \emph{arXiv preprint arXiv:2211.10420}, 2022.

\bibitem[Beck(2017)]{beck2017first}
Amir Beck.
\newblock \emph{First-order methods in optimization}.
\newblock SIAM, 2017.

\bibitem[Bishop and Nasrabadi(2006)]{bishop2006pattern}
Christopher~M Bishop and Nasser~M Nasrabadi.
\newblock \emph{Pattern recognition and machine learning}, volume~4.
\newblock Springer, 2006.

\bibitem[Blanchet et~al.(2016)Blanchet, Mossay, and Santambrogio]{blanchet2016existence}
Adrien Blanchet, Pascal Mossay, and Filippo Santambrogio.
\newblock Existence and uniqueness of equilibrium for a spatial model of social interactions.
\newblock \emph{International Economic Review}, 57\penalty0 (1):\penalty0 31--60, 2016.

\bibitem[Bolley(2008)]{bolley2008separability}
Fran{\c{c}}ois Bolley.
\newblock Separability and completeness for the wasserstein distance.
\newblock In \emph{S{\'e}minaire de probabilit{\'e}s XLI}, pages 371--377. Springer, 2008.

\bibitem[Brenier(1987)]{brenier1987decomposition}
Yann Brenier.
\newblock D{\'e}composition polaire et r{\'e}arrangement monotone des champs de vecteurs.
\newblock \emph{CR Acad. Sci. Paris S{\'e}r. I Math.}, 305:\penalty0 805--808, 1987.

\bibitem[Cabrales et~al.(2020)Cabrales, Guti{\'e}rrez-Santacreu, and Rodr{\'\i}guez-Galv{\'a}n]{cabrales2020numerical}
Roberto~Carlos Cabrales, Juan~Vicente Guti{\'e}rrez-Santacreu, and Jos{\'e}~Rafael Rodr{\'\i}guez-Galv{\'a}n.
\newblock Numerical solution for an aggregation equation with degenerate diffusion.
\newblock \emph{Applied Mathematics and Computation}, 377:\penalty0 125145, 2020.

\bibitem[Carrayrou et~al.(2002)Carrayrou, Mos{\'e}, and Behra]{carrayrou2002new}
J{\'e}r{\^o}me Carrayrou, Robert Mos{\'e}, and Philippe Behra.
\newblock New efficient algorithm for solving thermodynamic chemistry.
\newblock \emph{AIChE Journal}, 48\penalty0 (4):\penalty0 894--904, 2002.

\bibitem[Carrillo et~al.(2018)Carrillo, Huang, and Schmidtchen]{carrillo2018zoology}
Jos{\'e}~A Carrillo, Yanghong Huang, and Markus Schmidtchen.
\newblock Zoology of a nonlocal cross-diffusion model for two species.
\newblock \emph{SIAM Journal on Applied Mathematics}, 78\penalty0 (2):\penalty0 1078--1104, 2018.

\bibitem[Carrillo et~al.(2019)Carrillo, Craig, and Yao]{carrillo2019aggregation}
Jos{\'e}~A Carrillo, Katy Craig, and Yao Yao.
\newblock Aggregation-diffusion equations: dynamics, asymptotics, and singular limits.
\newblock \emph{Active Particles, Volume 2: Advances in Theory, Models, and Applications}, pages 65--108, 2019.

\bibitem[Chewi et~al.(2020)Chewi, Maunu, Rigollet, and Stromme]{chewi2020gradient}
Sinho Chewi, Tyler Maunu, Philippe Rigollet, and Austin~J Stromme.
\newblock Gradient descent algorithms for bures-wasserstein barycenters.
\newblock In \emph{Conference on Learning Theory}, pages 1276--1304. PMLR, 2020.

\bibitem[Daus et~al.(2022)Daus, Fellner, and J{\"u}ngel]{daus2022random}
Esther~S Daus, Markus Fellner, and Ansgar J{\"u}ngel.
\newblock Random-batch method for multi-species stochastic interacting particle systems.
\newblock \emph{Journal of Computational Physics}, 463:\penalty0 111220, 2022.

\bibitem[Del~Barrio and Loubes(2019)]{del2019central}
Eustasio Del~Barrio and Jean-Michel Loubes.
\newblock Central limit theorems for empirical transportation cost in general dimension.
\newblock \emph{The Annals of Probability}, 47\penalty0 (2):\penalty0 926--951, 2019.

\bibitem[Di~Francesco and Fagioli(2013)]{di2013measure}
Marco Di~Francesco and Simone Fagioli.
\newblock Measure solutions for non-local interaction pdes with two species.
\newblock \emph{Nonlinearity}, 26\penalty0 (10):\penalty0 2777, 2013.

\bibitem[Durmus and Moulines(2017)]{durmus2017nonasymptotic}
Alain Durmus and Eric Moulines.
\newblock Nonasymptotic convergence analysis for the unadjusted langevin algorithm.
\newblock 2017.

\bibitem[Evers et~al.(2017)Evers, Fetecau, and Kolokolnikov]{evers2017equilibria}
Joep~HM Evers, Razvan~C Fetecau, and Theodore Kolokolnikov.
\newblock Equilibria for an aggregation model with two species.
\newblock \emph{SIAM Journal on Applied Dynamical Systems}, 16\penalty0 (4):\penalty0 2287--2338, 2017.

\bibitem[Folland(1999)]{folland1999real}
Gerald~B Folland.
\newblock \emph{Real analysis: modern techniques and their applications}, volume~40.
\newblock John Wiley \& Sons, 1999.

\bibitem[Ghosh et~al.(2022)Ghosh, Lu, Nowicki, and Zhang]{ghosh2022representations}
Soumyadip Ghosh, Yingdong Lu, Tomasz Nowicki, and Edith Zhang.
\newblock On representations of mean-field variational inference.
\newblock \emph{arXiv preprint arXiv:2210.11385}, 2022.

\bibitem[Guo et~al.(2022)Guo, Hur, Liang, and Ryan]{guo2022online}
Wenxuan Guo, YoonHaeng Hur, Tengyuan Liang, and Chris Ryan.
\newblock Online learning to transport via the minimal selection principle.
\newblock In \emph{Conference on Learning Theory}, pages 4085--4109. PMLR, 2022.

\bibitem[Gutleb et~al.(2022)Gutleb, Carrillo, and Olver]{gutleb2022computing}
Timon Gutleb, Jos{\'e} Carrillo, and Sheehan Olver.
\newblock Computing equilibrium measures with power law kernels.
\newblock \emph{Mathematics of Computation}, 91\penalty0 (337):\penalty0 2247--2281, 2022.

\bibitem[Hua and Yamashita(2015)]{hua2015iteration}
Xiaoqin Hua and Nobuo Yamashita.
\newblock Iteration complexity of a block coordinate gradient descent method for convex optimization.
\newblock \emph{SIAM Journal on Optimization}, 25\penalty0 (3):\penalty0 1298--1313, 2015.

\bibitem[Jin et~al.(2020)Jin, Li, and Liu]{jin2020random}
Shi Jin, Lei Li, and Jian-Guo Liu.
\newblock Random batch methods (rbm) for interacting particle systems.
\newblock \emph{Journal of Computational Physics}, 400:\penalty0 108877, 2020.

\bibitem[Jordan et~al.(1998)Jordan, Kinderlehrer, and Otto]{jordan1998variational}
Richard Jordan, David Kinderlehrer, and Felix Otto.
\newblock The variational formulation of the fokker--planck equation.
\newblock \emph{SIAM journal on mathematical analysis}, 29\penalty0 (1):\penalty0 1--17, 1998.

\bibitem[Karimi et~al.(2016)Karimi, Nutini, and Schmidt]{karimi2016linear}
Hamed Karimi, Julie Nutini, and Mark Schmidt.
\newblock Linear convergence of gradient and proximal-gradient methods under the polyak-{\l}ojasiewicz condition.
\newblock In \emph{Joint European conference on machine learning and knowledge discovery in databases}, pages 795--811. Springer, 2016.

\bibitem[Kiefer and Wolfowitz(1956)]{kiefer1956consistency}
Jack Kiefer and Jacob Wolfowitz.
\newblock Consistency of the maximum likelihood estimator in the presence of infinitely many incidental parameters.
\newblock \emph{The Annals of Mathematical Statistics}, pages 887--906, 1956.

\bibitem[Lambert et~al.(2022)Lambert, Chewi, Bach, Bonnabel, and Rigollet]{lambert2022variational}
Marc Lambert, Sinho Chewi, Francis Bach, Silv{\`e}re Bonnabel, and Philippe Rigollet.
\newblock Variational inference via wasserstein gradient flows.
\newblock \emph{arXiv preprint arXiv:2205.15902}, 2022.

\bibitem[Lavenant and Zanella(2024)]{lavenant2024convergence}
Hugo Lavenant and Giacomo Zanella.
\newblock Convergence rate of random scan coordinate ascent variational inference under log-concavity.
\newblock \emph{arXiv preprint arXiv:2406.07292}, 2024.

\bibitem[Lavenant et~al.(2021)Lavenant, Zhang, Kim, and Schiebinger]{lavenant2021towards}
Hugo Lavenant, Stephen Zhang, Young-Heon Kim, and Geoffrey Schiebinger.
\newblock Towards a mathematical theory of trajectory inference.
\newblock \emph{arXiv preprint arXiv:2102.09204}, 2021.

\bibitem[Li and Rodrigo(2010)]{li2010wellposedness}
Dong Li and Jos{\'e}~L Rodrigo.
\newblock Wellposedness and regularity of solutions of an aggregation equation.
\newblock \emph{Revista Matem{\'a}tica Iberoamericana}, 26\penalty0 (1):\penalty0 261--294, 2010.

\bibitem[Li et~al.(2017)Li, Zhao, Arora, Liu, and Hong]{li2017faster}
Xingguo Li, Tuo Zhao, Raman Arora, Han Liu, and Mingyi Hong.
\newblock On faster convergence of cyclic block coordinate descent-type methods for strongly convex minimization.
\newblock \emph{The Journal of Machine Learning Research}, 18\penalty0 (1):\penalty0 6741--6764, 2017.

\bibitem[Mei et~al.(2016)Mei, Bai, and Montanari]{mei2016landscape}
Song Mei, Yu~Bai, and Andrea Montanari.
\newblock The landscape of empirical risk for non-convex losses.
\newblock \emph{arXiv preprint arXiv:1607.06534}, 2016.

\bibitem[Mokrov et~al.(2021)Mokrov, Korotin, Li, Genevay, Solomon, and Burnaev]{mokrov2021large}
Petr Mokrov, Alexander Korotin, Lingxiao Li, Aude Genevay, Justin~M Solomon, and Evgeny Burnaev.
\newblock Large-scale wasserstein gradient flows.
\newblock \emph{Advances in Neural Information Processing Systems}, 34:\penalty0 15243--15256, 2021.

\bibitem[Otto(2001)]{otto2001geometry}
Felix Otto.
\newblock The geometry of dissipative evolution equations: the porous medium equation.
\newblock 2001.

\bibitem[Owolabi and Atangana(2019)]{owolabi2019computational}
Kolade~M Owolabi and Abdon Atangana.
\newblock Computational study of multi-species fractional reaction-diffusion system with abc operator.
\newblock \emph{Chaos, Solitons \& Fractals}, 128:\penalty0 280--289, 2019.

\bibitem[Paz-Garcia et~al.(2013)Paz-Garcia, Johannesson, Ottosen, Ribeiro, and Rodr{\'\i}guez-Maroto]{paz2013computing}
Juan~Manuel Paz-Garcia, Bj{\"o}rn Johannesson, Lisbeth~M Ottosen, Alexandra~B Ribeiro, and Jos{\'e}~Miguel Rodr{\'\i}guez-Maroto.
\newblock Computing multi-species chemical equilibrium with an algorithm based on the reaction extents.
\newblock \emph{Computers \& chemical engineering}, 58:\penalty0 135--143, 2013.

\bibitem[Pinar(2021)]{pinar2021reaction}
Zehra Pinar.
\newblock The reaction--cross-diffusion models for tissue growth.
\newblock \emph{Mathematical Methods in the Applied Sciences}, 44\penalty0 (18):\penalty0 13805--13811, 2021.

\bibitem[Rockafellar(1997)]{rockafellar1997convex}
R~Tyrrell Rockafellar.
\newblock \emph{Convex analysis}, volume~11.
\newblock Princeton university press, 1997.

\bibitem[Salim et~al.(2020)Salim, Korba, and Luise]{salim2020wasserstein}
Adil Salim, Anna Korba, and Giulia Luise.
\newblock The wasserstein proximal gradient algorithm.
\newblock \emph{Advances in Neural Information Processing Systems}, 33:\penalty0 12356--12366, 2020.

\bibitem[Santambrogio(2015)]{santambrogio2015optimal}
Filippo Santambrogio.
\newblock Optimal transport for applied mathematicians.
\newblock \emph{Birk{\"a}user, NY}, 55\penalty0 (58-63):\penalty0 94, 2015.

\bibitem[Smith et~al.(1988)Smith, Everhart, Dickson, Knowler, and Johannes]{smith1988using}
Jack~W Smith, James~E Everhart, WC~Dickson, William~C Knowler, and Robert~Scott Johannes.
\newblock Using the adap learning algorithm to forecast the onset of diabetes mellitus.
\newblock In \emph{Proceedings of the annual symposium on computer application in medical care}, page 261. American Medical Informatics Association, 1988.

\bibitem[Sun and Ye(2021)]{sun2021worst}
Ruoyu Sun and Yinyu Ye.
\newblock Worst-case complexity of cyclic coordinate descent: $o(n^2)$ gap with randomized version.
\newblock \emph{Mathematical Programming}, 185:\penalty0 487--520, 2021.

\bibitem[Trillos and Sanz-Alonso(2020)]{trillos2020bayesian}
Nicolas~Garcia Trillos and Daniel Sanz-Alonso.
\newblock The bayesian update: variational formulations and gradient flows.
\newblock \emph{Bayesian Analysis}, 15\penalty0 (1):\penalty0 29--56, 2020.

\bibitem[V{\'a}zquez(2007)]{vazquez2007porous}
Juan~Luis V{\'a}zquez.
\newblock \emph{The porous medium equation: mathematical theory}.
\newblock Oxford University Press on Demand, 2007.

\bibitem[Vershynin(2018)]{vershynin2018high}
Roman Vershynin.
\newblock \emph{High-dimensional probability: An introduction with applications in data science}, volume~47.
\newblock Cambridge university press, 2018.

\bibitem[Villani(2021)]{villani2021topics}
C{\'e}dric Villani.
\newblock \emph{Topics in optimal transportation}, volume~58.
\newblock American Mathematical Soc., 2021.

\bibitem[Wibisono(2018)]{wibisono2018sampling}
Andre Wibisono.
\newblock Sampling as optimization in the space of measures: The langevin dynamics as a composite optimization problem.
\newblock In \emph{Conference on Learning Theory}, pages 2093--3027. PMLR, 2018.

\bibitem[Wright(2015)]{wright2015coordinate}
Stephen~J Wright.
\newblock Coordinate descent algorithms.
\newblock \emph{Mathematical Programming}, 151\penalty0 (1):\penalty0 3--34, 2015.

\bibitem[Wright and Recht(2022)]{wright2022optimization}
Stephen~J Wright and Benjamin Recht.
\newblock \emph{Optimization for data analysis}.
\newblock Cambridge University Press, 2022.

\bibitem[Yan et~al.(2023)Yan, Wang, and Rigollet]{yan2023learning}
Yuling Yan, Kaizheng Wang, and Philippe Rigollet.
\newblock Learning gaussian mixtures using the wasserstein-fisher-rao gradient flow.
\newblock \emph{arXiv preprint arXiv:2301.01766}, 2023.

\bibitem[Yao and Yang(2022)]{yao2022mean}
Rentian Yao and Yun Yang.
\newblock Mean field variational inference via wasserstein gradient flow.
\newblock \emph{arXiv preprint arXiv:2207.08074}, 2022.

\bibitem[Yao et~al.(2023)Yao, Huang, and Yang]{yao2023minimizing}
Rentian Yao, Linjun Huang, and Yun Yang.
\newblock Minimizing convex functionals over space of probability measures via kl divergence gradient flow.
\newblock \emph{arXiv preprint arXiv:2311.00894}, 2023.

\bibitem[Zamponi and J{\"u}ngel(2017)]{zamponi2017analysis}
Nicola Zamponi and Ansgar J{\"u}ngel.
\newblock Analysis of degenerate cross-diffusion population models with volume filling.
\newblock In \emph{Annales de l'Institut Henri Poincar{\'e} C, Analyse Non Lin{\'e}aire}, volume~34, pages 1--29. Elsevier, 2017.

\end{thebibliography}
\end{document}